\title{On representation-finite algebras and beyond}
\author{Klaus Bongartz}
\newtheorem{theorem}{Theorem}
\newtheorem{lemma}{Lemma}
\newtheorem{proposition}{Proposition}
\begin{document}
\title{On representation-finite algebras and beyond}
\author{Klaus Bongartz\\Bergische Universität Wuppertal\\Fachbereich C\\ Gaussstrasse 20\\D- 42219 Wuppertal\\Germany\\E-mail:bongartz@math.uni-wuppertal.de}
\date{}
\maketitle
\begin{abstract}
 We give a survey on the theory of representation-finite and certain minimal representation-infinite algebras. The main goals are the existence of multiplicative bases and of coverings 
with good properties. Both are attained using ray-categories. As applications we include a proof of a sharper version of the second Brauer-Thrall conjecture and of the fact that
 there are no gaps in the lengths of the indecomposables.
\end{abstract}

\section{Introduction}
At the conference ICRA 2012 in Bielefeld I gave a talk about the following result:

\begin{theorem}\label{gaps}
 Let $A$ be an associative algebra over an algebraically closed field. Then there is no gap in the lengths of the indecomposable $A$-modules of finite dimension.
\end{theorem}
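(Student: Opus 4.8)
The plan is to combine two elementary reductions with the two structural tools advertised in the abstract: ray--categories and multiplicative bases on the representation--finite side, and the theory of minimal representation--infinite algebras together with their good coverings on the other. First the statement reduces to finite--dimensional algebras. If it failed there would be finite--dimensional indecomposable $A$-modules $M,M'$ of lengths $n<m$ with $m\ge n+2$ and no indecomposable of some length $\ell$ with $n<\ell<m$; but then the finite--dimensional algebra $B:=A/\operatorname{Ann}_A(M\oplus M')$, which embeds into $\operatorname{End}_k(M\oplus M')$, still has $M$ and $M'$ as indecomposables of lengths $n$ and $m$, while every indecomposable $B$-module is an indecomposable $A$-module of the same length, so a proof for $B$ would give a contradiction. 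Secondly, since a simple module is indecomposable of length $1$, ``no gap'' is exactly the assertion that the set $L(A)$ of lengths of finite--dimensional indecomposables is an initial segment of $\{1,2,3,\dots\}$, i.e.\ is closed under $n\mapsto n-1$ for $n\ge2$. Hence from now on $A$ is finite dimensional over $k=\bar k$, and I split according to representation type.

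\emph{Representation--finite case.} Here I would invoke the existence of a multiplicative basis, so that $A$ is standard and $\operatorname{mod}A$ is the mesh category of a covering with good properties; equivalently, everything is encoded by a ray--category. Given an indecomposable $M$ with $\ell(M)=n\ge2$ one produces an indecomposable of length $n-1$ by deleting a single composition factor: if $M$ has simple socle then a maximal submodule again has simple socle, hence is indecomposable; if $M$ has simple top then the factor of $M$ by a simple submodule again has simple top, hence is indecomposable; and the remaining configurations, in which top and socle are both decomposable, are dealt with directly in the mesh category of the cover, using the translation and the ray relations. Thus $L(A)=\{1,\dots,N\}$.

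\emph{Representation--infinite case.} Since $A$ is finite dimensional, among the ideals $I$ with $A/I$ representation--infinite there is a maximal one, and $B:=A/I$ is then a \emph{minimal} representation--infinite algebra; as before it suffices to prove $L(B)=\{1,2,3,\dots\}$. For this I would feed $B$ into the structure theory of minimal representation--infinite algebras developed in the paper: the ray--category methods and a covering with good properties yield a workable description of $\operatorname{mod}B$ --- in particular a one--parameter family of indecomposables --- and inspecting this description, together with the finitely many exceptional minimal representation--infinite algebras, one reads off indecomposables of every length. Combined with the representation--finite case, this proves the theorem.

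\emph{Main obstacle.} The real content is the last step: realizing \emph{every} length for minimal representation--infinite algebras, as opposed to merely unboundedly many (which is the second Brauer--Thrall phenomenon). What blocks a uniform elementary argument is that the indecomposables of a given small length can sit in very different parts of $\operatorname{mod}B$ --- already for a tame hereditary algebra they are spread over preprojectives, preinjectives and several tubes --- so no naive submodule or factor--module manipulation works across the board, and one genuinely needs the combinatorial hold on $\operatorname{mod}B$ supplied by the ray--categories and the good coverings; the representation--finite case requires the same kind of hold, while only the two reductions of the first paragraph are routine.
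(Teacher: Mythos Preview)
Your reduction to finite-dimensional algebras is fine, and so is the reduction to a minimal representation-infinite quotient. But there is a genuine structural gap: you never separate the \emph{non-distributive} case. Ray-categories, multiplicative bases, and the ``covering with good properties'' are attached to a \emph{distributive} locally bounded category; for a non-distributive algebra none of this machinery is available, so you cannot ``feed $B$ into the structure theory of ray-categories'' as you propose. In the paper this case is disposed of separately and first, via Ringel's theorem (Theorem~\ref{Ringel}): a non-distributive algebra has an accessible module in every dimension. Only after that does one assume $A$ distributive and minimal representation-infinite, conclude that $A$ is standard (Theorem~\ref{faithful}), and pass to the interval-finite universal covering $\tilde P$ of the ray-category (Theorem~\ref{Fischbong}).

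Even granting distributivity, your sketch of the two cases is not the paper's argument and, as it stands, does not close. In the representation-finite case your socle/top trick is correct when one of them is simple, but ``the remaining configurations \dots\ are dealt with directly in the mesh category'' is exactly the hard content; the paper does \emph{not} do this by ad hoc mesh manipulations but by lifting to $\tilde P$, where the support of any indecomposable is a sincere simply connected (ssc) algebra, and then invoking Theorem~\ref{extfin} (proved by geometric methods in \cite{Extensions}) that every indecomposable over an ssc algebra is accessible. In the minimal representation-infinite case there is no list of ``finitely many exceptional'' algebras to inspect; instead the paper uses a dichotomy inside $\tilde P$: either some finite convex subcategory is critical (hence tame concealed, and Theorem~\ref{tameext} gives accessible modules in all dimensions), or all finite convex subcategories are representation-finite, so arbitrarily large ssc indecomposables exist and Theorem~\ref{extfin} applies again. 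The push-down functor $F_{\bullet}$ then transports accessibility to $A$. Your ``main obstacle'' paragraph correctly identifies where the difficulty lies, but the resolution is precisely this dichotomy together with the two accessibility theorems, not an inspection of one-parameter families.
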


The  proof of this elementary statement uses a generalization of an old result in covering theory and  many other facts  about  mild algebras i.e. algebras
 such that each proper quotient is representation-finite. A lot of mathematicians contributed to these results  which are not as well-known as Gabriels characterization of 
representation-finite quivers \cite{Dynkingab} and also not as easy to prove. Thus the organizers asked me to write a survey article about this material which  is
to some extent already discussed in the reviews  of Gabriel \cite{Revgab} and of Riedtmann \cite{Revriedt}.

The cornerstone is  the  article 'Representation-finite algebras and multiplicative bases' \cite{BGRS} which is beautiful but 
 hard to read. However, based on \cite{Standard,mild,Fischbacher,Geissm} one can now  generalize some of the main results and also simplify their proofs. I try to explain this with some 
details in 
section 3 after having introduced in section 2 the central concept of \cite{BGRS}: the ray-category attached to a distributive category. 

Each  mild ray-category has a  universal covering
 with good properties. This is  explained in section 4, before we summarize in the next section the results on coverings of the Auslander-Reiten quiver and on Galois coverings and relate 
them to the coverings of ray-categories. Section 6 contains two classification results.

 In the last section we give some interesting consequences of  the theory presented before. These include  a proof of theorem \ref{gaps}
and a sharper version of Brauer-Thrall 2, a criterion for finite representation type and finally some remarks on the classification of the representation-finite selfinjective algebras.

Most of the time we give no proofs but only precise references and comments that should help to understand a difficult argumentation. However, full proofs are given for the most important 
applications like theorem 1, Brauer-Thrall 2, the finiteness criterion and  representation-finite selfinjective algebras  or in situations where the existing proofs are simplified e.g. in 
sections 3.5, 3.8 and 5.3.

\section{Distributive categories and their  associated ray-categories}

\subsection{Some definitions, conventions and notations}We will always work over an algebraically 
closed field $k$ and we will consider only basic algebras so that the length of a module is just the dimension from now on.

Since we are using covering theory we consider often a locally bounded category $A$. This is a $k$-linear category such that different objects in $A$
 are not isomorphic, the endomorphism algebras $A(x,x)$
 are all local and for any $x$ the sum of the dimensions of all $A(x,y)$ and $A(y,x)$ are finite. As in the case of an algebra such a category has a uniquely determined locally-finite
 quiver $Q_{A}$ and there is a presentation of $A$, i.e. a surjective $k$-algebra homomorphism $\phi:kQ_{A} \rightarrow A$ such that the kernel $I$ is contained in the ideal generated by 
all paths of length $2$. However, $\phi$ and the kernel depend on some choices. We will always assume that $Q_{A}$ is connected, whence countable. 

 An $A$-module $M$ is just a covariant $k$-linear functor from $A$ to 
the category of $k$ vector spaces.  This can also be seen as a representation of $Q_{A}$ that satisfies the relations
 imposed by $I$. We denote the category of all modules by $A$-Mod and the full subcategory of modules with $dim \,M:= \sum_{x \in A} dim \, M(x) < \infty$ by $A$-mod.
A full subcategory $B$ of $A$ is called convex if for any $x,y$ in $B$ also all $z$ lying on a path from $x$ to $y$ in $Q_{A}$ belong to $B$. The quiver of a convex subcategory $B$  of $A$ 
is a subquiver of $Q_{A}$ and any $B$-module can be extended by zeros to an $A$-module. Using this we  often consider $B$-modules as $A$-modules. A line of length $d$ is a full convex subcategory given by a quiver of Dynkin type $A_{d}$ without relations. $A$ is Schurian if we have $dim A(x,y) \leq 1$ 
for all $x$ and $y$. If the quiver of $A$ contains no oriented cycle we call $A$ directed.

By definition $A$ is locally representation-finite if for each $x$ there are up to isomorphism only finitely many indecomposable modules $U$ in $A$-mod with $U(x)\neq 0$.
$A$ is mild if any proper quotient $A/I$ is locally representation-finite and $A$ is minimal representation-infinite if it is 
mild but not locally representation-finite.

If a locally bounded category $A$ is finite
 i.e. if it has only finitely many objects one can take the
 direct sum $B$ of all $A(x,y)$ 
and endow it with the multiplication induced by the composition in $A$. That way the finite locally bounded categories correspond to basic finite dimensional algebras and locally 
representation-finite reduces to representation-finite and so on.

A base category $B$ is a category with uniquely determined zero morphisms in each $B(x,y)$ such that different objects are not isomorphic, all endomorphisms different 
from the identity are nilpotent and for each $x$ there are only finitely many morphisms different from zero starting or ending in $x$. The linearization $kB$ of such a base
 category is the
 $k$-linear category with the
 same objects as $B$ and $kB(x,y)$ is the quotient of the $k$-vector space with basis $B(x,y)$ divided by the subspace spanned by the zero morphism. The composition is 
induced by the composition in $B$. A $B$-module is just a $kB$-module, $B$ is mild if $kB$ is so, $B-mod$ is $kB-mod$ and so on.

Any base category $B$ is given by a quiver $Q_{B}$ and relations. Again we always assume that $Q_{B}$ is connected. The points of $Q_{B}$ are the objects of $B$ and the arrows the irreducible morphisms in $B$, i.e. the non-zero 
non-invertible morphisms that are not a composition of non-invertible morphisms. The  ( non-linear ) path category $PQ_{B}$ having as morphisms the paths and additional zero-morphisms 
admits an obvious 
full functor to $B$ which is the identity on the objects and the arrows. Thus each base category can be
described by a quiver and a stable equivalence relation on its paths. Here an equivalence relation is stable provided it is invariant under left and right multiplication. 

We illustrate these lengthy definitions by the following example occurring in Riedtmanns work. We look at the quiver $Q$ shown in figure 1 and the relations
 $R_{1}=\{\beta \alpha  \sim  \gamma^{2}, \alpha\beta  \sim0\}$ and $R_{2}=\{\beta\alpha \sim \gamma^{2},\alpha\beta \sim  \alpha\gamma\beta,\gamma^{4} \sim0\}$.
 For $i=1,2$ let $B_{i}$ be the quotient 
of the non-linearized path category by the stable equivalence relation generated by $R_{i}$. Then these two base categories are  not isomorphic. However, their linearizations $kB_{i}$ are 
isomorphic 
iff the characteristic of $k$ is different from 2.

\setlength{\unitlength}{1cm}
\begin{picture}(8,2.5)
\put(5.1,0.9){\vector(1,0){0.7}}
\put(5.9,1.1){\vector(-1,0){0.7}}
\put(6.4,1.0){\circle{0.8}}
\put(5,1){ \circle*{0.2}}
\put(6,1){\circle*{0.2}}
\put(7,1){$\gamma$}

\put(5.5,1.3){$\alpha$}
\put(5.5,0.5){$\beta$}
\put(6,1.2){\vector(0,-1){0.1}}
\put(5,0){figure 1}

\end{picture}

\subsection{Distributive categories}

A locally bounded category is called distributive provided its lattice of two-sided ideals is distributive. The following two observations of Jans and Kupisch are basic.
\begin{proposition}\label{JansKupisch}
 Let $A$ be a locally bounded category. 
\begin{enumerate}
 \item ( \cite[Corollary 1.3]{Jans} ) The lattice of two-sided ideals in $A$ is finite iff $A$ is finite and distributive.
\item (  \cite[Satz 1.1]{Kupisch1} ) $A$ is distributive iff all $A(x,x)$ are isomorphic to a quotient $k[T]/(T^{m(x)})$ for some natural number $m(x)$ and all $A(x,y)$ are cyclic over $A(x,x)$ 
from the right or over $A(y,y)$ from the left.
\end{enumerate}

\end{proposition}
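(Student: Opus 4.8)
The plan is to \emph{localise} the lattice $\mathcal{L}(A)$ of two-sided ideals of $A$. For objects $x,y$ and $\mathfrak{a}\in\mathcal{L}(A)$ write $\mathfrak{a}_{xy}:=e_{y}\mathfrak{a}e_{x}$, a sub-bimodule of $A(x,y)$ over $A(y,y)$ on the left and $A(x,x)$ on the right. The elementary starting observations are: (i) an ideal is the direct sum of its components and sums and intersections of ideals are formed componentwise, so $\mathfrak{a}\mapsto\mathfrak{a}_{xy}$ is a \emph{surjective} lattice homomorphism onto the lattice $\mathcal{L}_{xy}$ of all such sub-bimodules of $A(x,y)$ --- surjective because the ideal generated by a sub-bimodule $N\subseteq A(x,y)$ has $(x,y)$-component $A(y,y)NA(x,x)=N$, and for $x=y$ this $\mathcal{L}_{xx}$ is simply the ideal lattice of the local algebra $A(x,x)$; (ii) the combined map $\mathfrak{a}\mapsto(\mathfrak{a}_{xy})_{x,y}$ is an embedding of $\mathcal{L}(A)$ as a \emph{sublattice} of $\prod_{x,y}\mathcal{L}_{xy}$; (iii) the quotient map $\mathcal{L}(A)\to\mathcal{L}(A/\mathrm{rad}\,A)$ is onto, and $A/\mathrm{rad}\,A$ is ``discrete'' --- it has no nonzero morphisms between distinct objects, since in a basic locally bounded category distinct objects cannot be retracts of one another --- so $\mathcal{L}(A/\mathrm{rad}\,A)$ is the Boolean lattice on the object set. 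Since distributivity passes to quotients and to sublattices of products, $\mathcal{L}(A)$ is distributive iff every $\mathcal{L}_{xy}$ is; and, using (iii) for one direction and (ii) for the other, $\mathcal{L}(A)$ is finite iff $A$ has finitely many objects and every $\mathcal{L}_{xy}$ is finite. So everything reduces to understanding the $\mathcal{L}_{xy}$.

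Next I would carry out the local classification, using that $k$ is algebraically closed, hence infinite. For a finite-dimensional local $k$-algebra $R$ with residue field $k$: the interval $[\mathrm{rad}(R)^{2},\mathrm{rad}(R)]$ of $\mathcal{L}(R)$ is the lattice of all subspaces of $\mathrm{rad}(R)/\mathrm{rad}(R)^{2}$, which --- as soon as that space has dimension $\ge 2$ --- contains a copy of the subspace lattice of $k^{2}$, an infinite lattice that is not distributive. Hence if $\mathcal{L}(R)$ is finite \emph{or} distributive then $\dim_{k}\mathrm{rad}(R)/\mathrm{rad}(R)^{2}\le 1$, $\mathrm{rad}(R)$ is principal, and $R\cong k[T]/(T^{m})$; conversely such an $R$ has an ideal lattice which is a finite chain. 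This settles the $A(x,x)$. Now fix $x\ne y$, put $M=A(x,y)$, $S=A(y,y)$, $R=A(x,x)$, and assume $S,R$ are already of the above form. The same interval trick applied to $\overline{M}:=M/(\mathrm{rad}(S)\,M+M\,\mathrm{rad}(R))$ forces $\dim_{k}\overline{M}\le 1$ whenever $\mathcal{L}_{xy}$ is finite or distributive; since $\overline{M}$ is the top of $M$ viewed as a module over the local commutative algebra $S\otimes_{k}R$, Nakayama gives that $M$ is cyclic over $S\otimes_{k}R$, so $M\cong k[\sigma,\tau]/\mathfrak{a}$ with $\mathfrak{a}\supseteq(\sigma^{n},\tau^{m})$. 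The sub-bimodule lattice of $M$ is the ideal lattice of this commutative ring, so by the previous step $\mathfrak{a}\not\subseteq(\sigma,\tau)^{2}$, i.e. $\mathfrak{a}$ contains a parameter and $k[\sigma,\tau]/\mathfrak{a}\cong k[U]/(U^{p})$. Then the images of $\sigma$ and $\tau$ cannot both lie in $\mathrm{rad}^{2}$, so one of them, say that of $\sigma$, generates the radical; this gives $M=Sm$ for a suitable $m$, i.e. $M$ is cyclic over $A(y,y)$ from the left --- and $M=mR$, cyclic over $A(x,x)$ from the right, in the other case. Conversely, if $M$ is cyclic over $R\cong k[T]/(T^{m})$ from the right then as a right $R$-module it is uniserial, so its sub-bimodules form a chain, which is distributive and, when $A$ is finite, finite; likewise on the other side.

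Putting the two paragraphs together proves both statements. For Kupisch's criterion: $A$ is distributive iff every $\mathcal{L}_{xy}$ is distributive iff every $A(x,x)\cong k[T]/(T^{m(x)})$ and every $A(x,y)$ is cyclic over $A(x,x)$ from the right or over $A(y,y)$ from the left. For Jans's statement: if $\mathcal{L}(A)$ is finite then $A$ has finitely many objects by (iii) and every $\mathcal{L}_{xy}$ is finite, so by the local classification the $A(x,x)$ and $A(x,y)$ have the above special form, whence by Kupisch $A$ is finite and distributive; conversely, if $A$ is finite and distributive, then by Kupisch every $\mathcal{L}_{xy}$ is a finite chain and there are only finitely many pairs $(x,y)$, so $\prod_{x,y}\mathcal{L}_{xy}$ is finite, and $\mathcal{L}(A)$, being a sublattice of it, is finite.

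I expect the genuine work to sit in the second paragraph: realising that a two-dimensional semisimple layer is exactly the obstruction to both finiteness and distributivity of an ideal or sub-bimodule lattice, running the Nakayama reduction over the local algebra $S\otimes_{k}R$ cleanly, and extracting ``cyclic over one of the two sides'' from the uniseriality of $k[U]/(U^{p})$ --- the last point being the only one of the three that is not automatic. The lattice-theoretic bookkeeping in the first and third paragraphs is routine, the sole thing needing care being that sums and intersections of two-sided ideals in a locally bounded category are computed componentwise, which is what makes the restriction maps lattice homomorphisms and the combined map a sublattice embedding.
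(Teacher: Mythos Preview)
Your argument is correct, but there is nothing to compare it against: the paper does not prove this proposition. It is stated with attributions to Jans and Kupisch and then used as a black box; the surrounding text merely remarks that quotients and full subcategories of distributive categories are again distributive and that directed categories are distributive iff Schurian.

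Your reconstruction is sound. The reduction in the first paragraph is exactly the right bookkeeping: the componentwise computation of sums and intersections makes $\mathfrak a\mapsto(\mathfrak a_{xy})$ a lattice embedding into $\prod_{x,y}\mathcal L_{xy}$, and each projection is a surjective lattice homomorphism, so distributivity of $\mathcal L(A)$ is equivalent to distributivity of every $\mathcal L_{xy}$. The use of $\mathcal L(A/\mathrm{rad}\,A)$ to force finiteness of the object set is clean. In the local step, the key observation that a two-dimensional semisimple layer produces a copy of the subspace lattice of $k^{2}$ --- infinite and non-distributive because $k$ is infinite --- is precisely the mechanism behind both Jans's and Kupisch's criteria, and your passage through the commutative local ring $S\otimes_k R\cong k[\sigma,\tau]/(\sigma^{n},\tau^{m})$ to reduce the bimodule case to the algebra case is the natural way to do it. The extraction of ``cyclic on one side'' from the fact that one of $\sigma,\tau$ must map to a generator of the principal radical of $k[U]/(U^{p})$ is correct; and for the converse it suffices, as you note, that sub-bimodules are in particular one-sided submodules of a uniserial module, hence totally ordered.

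One small remark: in the converse for part b) you should also record that if every $A(x,x)\cong k[T]/(T^{m(x)})$ and every $A(x,y)$ is one-sided cyclic, then in particular each $\mathcal L_{xx}$ is a chain too, so all $\mathcal L_{xy}$ are chains and the product is distributive; you state this for $x\neq y$ but it is of course immediate for $x=y$ as well.
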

 It is clear that  quotients and full subcategories of distributive categories are again distributive. A directed category is distributive
iff it is Schurian. Both algebras occurring in the example of Riedtmann are distributive.

Jans introduces in his paper the famous Brauer-Thrall conjectures 1 and 2 for a finite dimensional algebra A. Recall that Brauer-Thrall 1 - abbreviated BT 1 -  says that
 the representation-finite algebras are 
the only ones with a bound for the dimensions of the indecomposables, whereas Brauer-Thrall 2 says that for a representation-infinite algebra $A$ there are infinitely many dimensions
in which  one finds infinitely many isomorphism classes of indecomposable $A$-modules. BT 1 was proven by Roiter in 1968 and there is the long article \cite{NR} of Nazarova and 
Roiter aiming at a proof of BT 2, but the first complete proof  is 
due to Bautista in 1984. It uses many of the results surveyed in this article and I will give 
later a  proof of a more precise version of BT 2 that was proposed by myself at the Luminy-conference in 1982 for a certain type of algebras which includes in fact all minimal representation-infinite 
algebras as we know now.

We say that $A$ satisfies BT 0 if there is no gap in the lengths of the indecomposable $A$-modules. The reason why this was not
 formulated as a conjecture could be 
that the truth of BT 1 and BT 2 
for group
algebras in characteristic $p$ follows directly from the special case of $p$-groups by using Krull-Remak-Schmidt and induction resp. restriction to a $p$-Sylow-subgroup \cite{Higman}.
 However BT 0 is not obvious for
 group algebras because the  behaviour of simple and indecomposable modules under induction and restriction is difficult to control. 

Jans proves BT 2 in his paper \cite[theorem2.1]{Jans} for algebras that are not distributive. The truth of BT 0 for these algebras is apparently shown for the first time in \cite[section 1]{Gaps}
 published  2009 in the archive. 
Shortly after that a stronger statement 
was proven by Ringel. Namely he defines recursively on the dimension the notion of an accessible module by requiring that all simples are accessible and that a module 
of dimension $d>1$ is
 accessible provided 
it is indecomposable and it admits an accessible submodule or quotient of dimension $d-1$.

\begin{theorem} \label{Ringel}( \cite{Ringel2} )
 If $A$ is not distributive it has an accessible module in each dimension.
\end{theorem}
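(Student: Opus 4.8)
The plan is to reduce, via Proposition~\ref{JansKupisch}, to a very small non-distributive subquotient of $A$ and to write down accessible modules there. Two remarks legitimise the reduction. First, if $B$ is a convex full subcategory of $A$, or $B=A/I$ a quotient, then a $B$-module -- extended by zero, resp.\ regarded as an $A$-module annihilated by $I$ -- has the property that every $A$-submodule and every $A$-quotient of it is again of that form, while the simple $A$-modules so obtained are exactly the simple $B$-modules; hence an ascending or descending chain witnessing accessibility over $B$ witnesses it over $A$, and it suffices to produce accessible modules of every dimension over some non-distributive convex subquotient $C$ of $A$. Second, since non-distributivity is detected by finitely much data, we may take $C$ minimal, meaning every proper convex subquotient of $C$ is distributive.

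Next I would locate the minimal non-distributive categories using Proposition~\ref{JansKupisch}(2). If some $C(x,x)$ is not a $k[T]/(T^{m})$, then $\dim_{k}\mathrm{rad}\,C(x,x)\big/\mathrm{rad}^{2}\,C(x,x)\ge 2$; the one-object subcategory on $x$ is already non-distributive, so by minimality $C=C(x,x)$, and as this algebra has the local radical-square-zero algebra $L=k\langle u,v\rangle/(u,v)^{2}$ as a quotient, $C=L$. Otherwise every $C(x,x)$ is monogenic while some $C(x,y)$ with $x\ne y$ is cyclic over neither $C(x,x)$ from the right nor $C(y,y)$ from the left; then $\dim_{k}C(x,y)\ge 2$ and this space is spanned by paths, at least two of which are linearly independent. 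Choosing a linearly independent pair of paths $p,q\colon x\to y$ of minimal total length, one checks that $p$ and $q$ begin with different arrows, end with different arrows, and share no intermediate vertex -- otherwise a strictly shorter linearly independent pair would appear -- so that $p\cup q$ is an unoriented cycle in $Q_{C}$. A careful reduction (the main obstacle, see below) then shows that $C$ is a representation-infinite string algebra whose quiver is this cycle, possibly decorated with a few loops and carrying monomial relations; the Kronecker algebra is the smallest instance.

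For such a $C$ I would finish with string modules: every string $w$ of $Q_{C}$ gives an indecomposable string module $M(w)$ of dimension $1+\mathrm{length}(w)$, and erasing the last letter of $w$ exhibits $M(w')$ -- for $w'$ the shorter string so produced -- as a submodule or a quotient of $M(w)$, by a standard property of string modules over string algebras. Since $C$ is representation-infinite, $Q_{C}$ has strings of every length. Given $d\ge 1$, take a string $w$ of length $d-1$; its initial segments $w=w_{d},w_{d-1},\dots,w_{1}$, with $w_{i}$ of length $i-1$, form a chain in which each $M(w_{i-1})$ is a submodule or a quotient of $M(w_{i})$. As $M(w_{1})$ is simple and all $M(w_{i})$ are indecomposable, $M(w_{d})$ is accessible; it has dimension $d$, which proves the theorem.

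The step I expect to be the real obstacle is the middle one: turning the ``shortest doubled path'' skeleton into an honest minimal non-distributive $C$ and recognising it. One must control convexity and the relation ideal while cutting away everything lying off the cycle, so as not to accidentally identify $p$ with $q$, and one must allow for the loops and monomial relations that genuinely survive -- for instance the algebra on $x\xrightarrow{\alpha}y$ with a loop $\ell$ at $x$, a loop $\mu$ at $y$, and relations $\ell^{2}=\mu^{2}=\mu\alpha\ell=0$ is minimal non-distributive, is not the Kronecker algebra, and is nevertheless a representation-infinite string algebra -- which is all the final step requires. Alternatively one might bypass the classification entirely by showing directly that a non-distributive $A$ has a representation-infinite special biserial convex subquotient.
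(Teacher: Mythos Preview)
The paper does not prove this theorem; it is quoted from \cite{Ringel2} and used as a black box (see the sentence following the statement, and the first line of the proof of Theorem~\ref{BT 0}). So there is no ``paper's proof'' to compare against, and I will comment on your attempt on its own merits.

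Your bookends are fine. The reduction in the first paragraph is correct: accessible modules over a convex subcategory or a quotient remain accessible over $A$, and since everything is finite-dimensional a minimal non-distributive convex subquotient $C$ exists. The endgame is also correct: over a special biserial algebra with arbitrarily long strings, truncating a string one letter at a time exhibits each string module as accessible, in every dimension. The one-object case is handled, since $L=k\langle u,v\rangle/(u,v)^{2}$ is special biserial (two loops, radical square zero) and admits strings $uv^{-1}uv^{-1}\cdots$ of every length; you should say this explicitly rather than leaving $L$ outside the string discussion.

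The genuine gap is exactly where you place it: the assertion that a minimal non-distributive $C$ with all $C(x,x)$ uniserial is a string algebra on a single unoriented cycle (possibly with loops and monomial relations). Your ``shortest doubled path'' argument controls the underlying graph but not the relations, and you give no reason why no commutativity relation and no third arrow at a vertex can survive minimality. Your own example already shows the answer is not just the Kronecker algebra; nothing you have written excludes further shapes. As it stands this step is a conjecture, not a proof.

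For comparison, the direct route --- visible in this paper's proof of Theorem~\ref{BT2} and, I believe, close to what Ringel actually does --- avoids classifying $C$ altogether. Non-distributivity gives primitive idempotents $e,f$ and two linearly independent elements $v,w\in eAf$ annihilated by $\operatorname{rad}A$ on both sides; from this ``hidden Kronecker'' one writes down by hand, for each $d\ge 1$, an accessible $A$-module of dimension $d$ (a string-type module built from copies of $S_{e}$ and $S_{f}$ linked alternately by $v$ and $w$). That buys you a short self-contained argument with no case analysis, whereas your approach, if the classification step could be completed, would have the advantage of identifying the minimal non-distributive algebras explicitly --- interesting in its own right, but more than is needed here.
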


So it remains to prove theorem 1 only for distributive categories. We will see that also in this case there is an accessible module in each dimension where an indecomposable exists at all.

\subsection{Ray-categories and stem-categories}
Throughout this section $A$ denotes a distributive category. By Kupischs result the subbimodules of $A(x,y)$ are linearly ordered with respect to inclusion and they are cyclic over $A(x,x)$ or $A(y,y)$.
To  a distributive category  one can attach  two base  categories both having the same objects as $A$: the stem-category $\hat{A}$ introduced by Kupisch 1965 in \cite[Satz 1.2]{Kupisch1} and the ray-category
$\vec{A}$ introduced 1985 in \cite{BGRS}.

The set of morphisms $\hat{A}(x,y)$ is the finite set of subbimodules of $A(x,y)$ and  the composition is defined by the product of 
subbimodules. Of course, this composition is associative and  the non-zero morphisms in $\hat{A}$ produce in the linearization 
$k\hat{A}$ a filtered multiplicative basis. This means that each non-zero product of two base vectors is again a base vector and that the basis contains all identities and bases for 
powers of the radical. Kupisch proves in \cite[Satz1.2]{Kupisch1} that $A$ and the linearization $k\hat{A}$ have 'the same' ideal
 lattice and he uses the stem-category in his analysis of the structure of representation-finite symmetric algebras ( \cite{Kupisch1,Kupisch2} ) culminating in the proof
 that these algebras and more generally 
representation-finite selfinjective 
algebras have a filtered  multiplicative
basis ( \cite{Kupisch3,Kupisch4} ).

The definition of the ray-category $\vec{A}$ is more complicated. For later use we introduce some more definitions and notations. Given a
 morphism $\mu$ in $A(x,y)$ we denote by $A_{\mu}$ the subbimodule generated by $\mu$. If $A_{\mu}$ is cyclic over $A(x,x)$ from the right resp. over
$A(y,y)$ from the left resp. from both sides  we call $\mu$  cotransit resp. transit resp. bitransit. The orbit of $\mu$ 
 under the obvious action of the groups of units in
 $A(x,x)$ and $A(y,y)$ is the ray $\vec{\mu}$ and $\vec{A}(x,y)$ is the set of these rays. The map $\mu \mapsto A_{\mu}$ induces a bijection  between the sets $\vec{A}(x,y)$ and 
$\hat{A}(x,y)$. The composition inside $\vec{A}$ is defined in the next proposition where also some properties of $\vec{A}$ are listed. We include a proof of some non-trivial parts as 
an illustration.

\begin{proposition}\label{ray}( \cite[1.7,1.13]{BGRS} )
 Let $A$ be a distributive category and let $\mu:x \rightarrow y$ and $\nu:y \rightarrow z$ be two morphisms in $A$.  Then the following is true:
\begin{enumerate}
 \item Either  $\nu \mu$ belongs to $\nu \,rad\,A(y,y)\,\mu$ or not. In the first case there are morphisms $\mu'$ and $\nu'$ in the rays $\vec{\mu}$ and $\vec{\nu}$ with $\nu'\mu'=0$ and 
we define $\vec{\nu}\vec{\mu}=0$. 
In the second case the compositions $\nu'\mu'$ of all morphisms in the rays $\vec{\nu}$ and $\vec{\mu}$ form the non-zero ray $\overrightarrow{\nu\mu}$ which we take as $\vec{\nu}\vec{\mu}$.
\item  The composition just defined is associative.
\item If $I$ is a two-sided  ideal  in $A$ then $\vec{I} =\{\vec{\mu} | \mu \, \in \,I \}$ is an 'ideal' in $\vec{A}$ with $\vec{A}/\vec{I} \simeq \overrightarrow{A/I}$. The map $I \mapsto \vec{I}$
 is an isomorphism of the ideal lattices.
\item For each $x$ the endomorphism set $\vec{A}(x,x)$ is isomorphic to the semigroup $H_{n}=\{1,\alpha,\alpha^{2},\ldots ,\alpha^{n}=0\}$ with $n+1$ elements, where $n$ depends on $x$. 
Similarly, for all $x,y$ the set $\vec{A}(x,y)$ is cyclic under the action of $\vec{A}(x,x)$ from the right or under the action of $\vec{A}(y,y)$ from the left.
\item From $\vec{\nu} \vec{\mu}=\vec{\nu}\vec{\lambda}\vec{\mu }\neq 0$ it follows that $\vec{\lambda}$ is an identity. Similarly, $\vec{\nu}_{1}=\vec{\nu}_{2}$ follows from
$\vec{\mu}\vec{\nu}_{1}=\vec{\mu}\vec{\nu}_{2}\neq 0$ or from $\vec{\nu}_{1}\vec{\mu}=\vec{\nu}_{2}\vec{\mu}\neq 0$. 
 
\end{enumerate}

\end{proposition}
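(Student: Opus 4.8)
The plan is to reduce everything to the Jans--Kupisch structure result, Proposition~\ref{JansKupisch}(b), together with the bijection $\vec{\mu}\mapsto A_{\mu}$ between $\vec{A}(x,y)$ and $\hat{A}(x,y)$ recorded above: every $A(x,x)$ equals $k[T]/(T^{m(x)})$, hence is local, \emph{commutative}, with principal radical $\mathrm{rad}\,A(x,x)=\alpha_{x}A(x,x)$, and every hom-space $A(x,y)$ is cyclic either over $A(x,x)$ from the right or over $A(y,y)$ from the left --- so that all its subbimodules, which by distributivity form a chain, inherit the same one-sided cyclicity; in particular the ray of a morphism is determined by the subbimodule it generates. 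Two further elementary facts would be isolated first. (i) For $\xi\neq0$ one has $\vec{\xi}=A_{\xi}\setminus A_{\xi}^{-}$, where $A_{\xi}^{-}$ denotes the largest subbimodule properly contained in $A_{\xi}$ (explicitly $A_{\xi}^{-}=A_{\xi}\,\mathrm{rad}\,A(x,x)$ or $\mathrm{rad}\,A(z,z)\,A_{\xi}$ according to the one-sided cyclicity type): this follows by writing $\xi$ through a one-sided generator of $A_{\xi}$. (ii) The \emph{iteration argument} ($rg=gb$ with $b$ a unit forces $g=r^{n}gb^{-n}\to0$, whence $b$ is a non-unit) yields inclusions such as $\mathrm{rad}\,A(y,y)\,\mu_{0}\subseteq\mu_{0}\,\mathrm{rad}\,A(x,x)$ whenever $\mu_{0}$ right-generates $A(x,y)$ over $A(x,x)$, together with its evident left/right/outer-object variants.

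For (1), note $\nu\,\mathrm{rad}\,A(y,y)\,\mu$ is the image of the $k$-linear map $r\mapsto\nu r\mu$, hence a subspace, and a unit $w=\lambda+r$ ($\lambda\in k^{\times}$, $r\in\mathrm{rad}\,A(y,y)$) satisfies $\nu w\mu\equiv\lambda\,\nu\mu\pmod{\nu\,\mathrm{rad}\,A(y,y)\,\mu}$. Replacing $\mu,\nu$ by ray representatives $\bar{\mu}=w_{3}\mu w_{4}$, $\bar{\nu}=w_{1}\nu w_{2}$ yields $\bar{\nu}\bar{\mu}=w_{1}(\nu w\mu)w_{4}$ with $w=w_{2}w_{3}$ a unit and $\bar{\nu}\,\mathrm{rad}\,A(y,y)\,\bar{\mu}=w_{1}(\nu\,\mathrm{rad}\,A(y,y)\,\mu)w_{4}$, so the above congruence shows the condition ``$\nu\mu\in\nu\,\mathrm{rad}\,A(y,y)\,\mu$'' depends only on $\vec{\mu},\vec{\nu}$; this makes the recipe for $\vec{\nu}\vec{\mu}$ unambiguous. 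In the first case $\nu\mu=\nu r\mu$ for a single $r\in\mathrm{rad}\,A(y,y)$, so $\nu(1-r)\mu=0$ with $1-r$ a unit and $\nu':=\nu(1-r)\in\vec{\nu}$, $\mu':=\mu$ do it. The substance of the second case is the lemma $\nu\,\mathrm{rad}\,A(y,y)\,\mu\subseteq A_{\nu\mu}^{-}$, which I would prove by a short case distinction: if $A(x,y)=\mu_{0}A(x,x)$, fact (ii) gives $\nu\,\mathrm{rad}\,A(y,y)\,\mu\subseteq\nu\mu\,\mathrm{rad}\,A(x,x)\subseteq A_{\nu\mu}^{-}$; the case $A(y,z)$ left-cyclic over $A(z,z)$ is symmetric; and in the remaining case, where both $A(x,y)$ and $A(y,z)$ are cyclic over $A(y,y)$ (and over nothing else), one checks directly that $\nu\,\mathrm{rad}\,A(y,y)\,\mu$ is itself a subbimodule, necessarily $\subsetneq A_{\nu\mu}$ by the chain property together with $\nu\mu\notin\nu\,\mathrm{rad}\,A(y,y)\,\mu$, hence $\subseteq A_{\nu\mu}^{-}$. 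Granting the lemma, $\nu w\mu\equiv\lambda\,\nu\mu\pmod{A_{\nu\mu}^{-}}$ with $\lambda\neq0$ and $\nu\mu\notin A_{\nu\mu}^{-}$ (by (i)) forces $\nu w\mu\in A_{\nu\mu}\setminus A_{\nu\mu}^{-}=\overrightarrow{\nu\mu}$; since $\{\nu'\mu':\nu'\in\vec{\nu},\mu'\in\vec{\mu}\}=\bigcup_{w\in A(y,y)^{\times}}\overrightarrow{\nu w\mu}$, this is exactly $\overrightarrow{\nu\mu}$.

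Part (2), associativity, is the technical heart, and I expect the main obstacle. For composable $\mu:x\to y$, $\nu:y\to z$, $\lambda:z\to t$ one first checks from (1) that whenever $(\vec{\lambda}\vec{\nu})\vec{\mu}$ or $\vec{\lambda}(\vec{\nu}\vec{\mu})$ is non-zero it equals $\overrightarrow{\lambda\nu\mu}$ (using associativity of composition in $A$), so only the equivalence ``$(\vec{\lambda}\vec{\nu})\vec{\mu}=0\Longleftrightarrow\vec{\lambda}(\vec{\nu}\vec{\mu})=0$'' remains. Unwinding the definition in (1) (and using that $P\vee(\neg P\wedge Q)$ is $P\vee Q$), this is the purely internal statement
\[ \bigl[\lambda\nu\in\lambda\,\mathrm{rad}\,A(z,z)\,\nu\ \text{or}\ \lambda\nu\mu\in\lambda\nu\,\mathrm{rad}\,A(y,y)\,\mu\bigr]\Longleftrightarrow\bigl[\nu\mu\in\nu\,\mathrm{rad}\,A(y,y)\,\mu\ \text{or}\ \lambda\nu\mu\in\lambda\,\mathrm{rad}\,A(z,z)\,\nu\mu\bigr]. \]
Two of the four implications are immediate ($\nu\mu=\nu r\mu\Rightarrow\lambda\nu\mu=\lambda\nu r\mu$, and $\lambda\nu=\lambda s\nu\Rightarrow\lambda\nu\mu=\lambda s\nu\mu$); the substance is to show, assuming $\lambda\nu\notin\lambda\,\mathrm{rad}\,A(z,z)\,\nu$ and $\nu\mu\notin\nu\,\mathrm{rad}\,A(y,y)\,\mu$ (so $\vec{\lambda}\vec{\nu}=\overrightarrow{\lambda\nu}$ and $\vec{\nu}\vec{\mu}=\overrightarrow{\nu\mu}$ are already non-zero), that $\lambda\nu\mu\in\lambda\nu\,\mathrm{rad}\,A(y,y)\,\mu$ holds iff $\lambda\nu\mu\in\lambda\,\mathrm{rad}\,A(z,z)\,\nu\mu$ does. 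This I would settle by a case analysis on the one-sided cyclicity type of each of $A(x,y),A(y,z),A(z,t)$, reducing each time --- via principality of the radicals and the chain of subbimodules of $A(x,t)$ --- to a comparison of exponents of the relevant $\alpha$'s; this is in essence the argument of \cite[1.7, 1.13]{BGRS}.

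Once (1), (2) and the bijection $\vec{\mu}\mapsto A_{\mu}$ are in hand the rest is comparatively formal. For (3): by distributivity a two-sided ideal $I$ of $A$ is recorded by the chain $(I(x,y))_{x,y}$ of subbimodules it contains, and transporting along $\vec{\mu}\mapsto A_{\mu}$ shows that ``ideal of $\vec{A}$'' (a set of rays closed under composition and downward closed) is exactly such a family; hence $I\mapsto\vec{I}=\{\vec{\mu}:\mu\in I\}$ is a bijection onto the ideals of $\vec{A}$, order-preserving both ways, so a lattice isomorphism (meets and joins being intersections and sums of subbimodules, i.e.\ minima and maxima in the chain), and $\vec{A}/\vec{I}\simeq\overrightarrow{A/I}$ follows from the bijection, for each $(x,y)$, between subbimodules of $(A/I)(x,y)=A(x,y)/I(x,y)$ and subbimodules of $A(x,y)$ containing $I(x,y)$ --- i.e.\ rays outside $\vec{I}$ --- which respects composition. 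For (4): $\vec{A}(x,x)$ corresponds to $\hat{A}(x,x)=\{\alpha_{x}^{i}A(x,x):0\le i\le m(x)\}$, the ideals of $k[T]/(T^{m(x)})$, with product $\alpha_{x}^{i}A\cdot\alpha_{x}^{j}A=\alpha_{x}^{i+j}A$, which is precisely $H_{m(x)}$; on endomorphism sets the bijection is in fact a semigroup isomorphism since (1) gives $\overrightarrow{\alpha_{x}^{i}}\,\overrightarrow{\alpha_{x}^{j}}=\overrightarrow{\alpha_{x}^{i+j}}$ (the product being $0$ iff $\alpha_{x}^{i+j}=0$); and the cyclicity of $\vec{A}(x,y)$ over $\vec{A}(x,x)$ from the right (resp.\ over $\vec{A}(y,y)$ from the left) is transported from that of $A(x,y)$, using (1) once more to get $\overrightarrow{\mu_{0}}\,\overrightarrow{\alpha_{x}^{j}}=\overrightarrow{\mu_{0}\alpha_{x}^{j}}$ as long as $\mu_{0}\alpha_{x}^{j}\neq0$. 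For (5): composability forces $\lambda\in A(y,y)$; were $\vec{\lambda}$ not the identity then $\lambda\in\mathrm{rad}\,A(y,y)$, so $\nu\lambda\mu\in\nu\,\mathrm{rad}\,A(y,y)\,\mu\subseteq A_{\nu\mu}^{-}$ by the lemma of (1) (applicable since $\vec{\nu}\vec{\mu}\neq0$), contradicting $\overrightarrow{\nu\lambda\mu}=\vec{\nu}\vec{\lambda}\vec{\mu}=\vec{\nu}\vec{\mu}=A_{\nu\mu}\setminus A_{\nu\mu}^{-}$; and the two cancellation assertions follow by writing, say, $\vec{\nu}_{1},\vec{\nu}_{2}$ as $\overrightarrow{\sigma_{0}\alpha_{a}^{j_{1}}},\overrightarrow{\sigma_{0}\alpha_{a}^{j_{2}}}$ along the chain $\vec{A}(a,b)$ and noting that $j\mapsto A_{\mu\sigma_{0}\alpha_{a}^{j}}$ is strictly decreasing until it hits $0$, so $\overrightarrow{\mu\nu_{1}}=\overrightarrow{\mu\nu_{2}}\neq0$ forces $j_{1}=j_{2}$.
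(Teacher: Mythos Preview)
Your proof of part (a) is correct and takes a somewhat different route from the paper. The paper argues the second case by contradiction: assuming $A_{\nu'\mu'}\subsetneq A_{\nu\mu}$, it writes $\nu'\mu'=\nu\mu\rho$ with $\rho$ nilpotent (using one-sided cyclicity of $A(x,z)$), expands $\nu'=\alpha\nu\beta$, $\mu'=\gamma\mu\delta$, and derives a contradiction via a transit/cotransit case split on $\mu$. Your approach via the lemma $\nu\,\mathrm{rad}\,A(y,y)\,\mu\subseteq A_{\nu\mu}^{-}$ is a clean alternative; the only place it needs a word more is the inclusion $\nu\mu\,\mathrm{rad}\,A(x,x)\subseteq A_{\nu\mu}^{-}$ in your case~1, which is not automatic from the cyclicity of $A(x,y)$ alone but follows once one also splits on the cyclicity side of $A(x,z)$ (in the left-cyclic case one uses the iteration argument once more on $\nu\mu r=c\,\nu\mu$).

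Where you diverge substantially is part (b). You identify associativity as ``the technical heart'' and propose an $8$-case analysis on the cyclicity types of $A(x,y),A(y,z),A(z,t)$, deferring to \cite{BGRS}. The paper, by contrast, disposes of (b) in two lines as an immediate corollary of the \emph{full} statement of (a): since in the non-zero case the products $\nu'\mu'$ with $\nu'\in\vec{\nu}$, $\mu'\in\vec{\mu}$ constitute \emph{exactly} the ray $\overrightarrow{\nu\mu}$ (not merely a subset), one gets that $\vec{\nu}(\vec{\mu}\vec{\lambda})\neq 0$ holds iff $\nu'\mu'\lambda'\neq 0$ for \emph{all} representatives $\nu',\mu',\lambda'$, and then these products form the ray $\overrightarrow{\nu\mu\lambda}$. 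This characterisation is manifestly symmetric in the bracketing, so associativity in $A$ finishes it. You in fact already record the key observation (``whenever $(\vec{\lambda}\vec{\nu})\vec{\mu}$ or $\vec{\lambda}(\vec{\nu}\vec{\mu})$ is non-zero it equals $\overrightarrow{\lambda\nu\mu}$'') but then do not exploit surjectivity of $(\nu',\mu')\mapsto\nu'\mu'$ onto $\overrightarrow{\nu\mu}$ to obtain the symmetric ``for all representatives'' criterion; this is what lets one bypass the case analysis entirely.

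Parts (c) and (d) are omitted in the paper; your sketches are reasonable. For (e) your argument and the paper's coincide: the first cancellation law is immediate from (a) (your lemma gives it directly), and the other two follow from the first together with the cyclicity statement (d), exactly as you indicate.
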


\begin{proof} a) If $\nu\mu \in \nu\,rad A(y,y)\,\mu$ we have $\nu\mu=\nu\rho\mu$ for some nilpotent $\rho$, whence $\nu (id_{y}-\rho )\mu = 0 = \nu' \mu' $  with $\nu' = \nu$ and
 $\mu '= (id_{y}- \rho ) \mu $. 

 If $\nu\mu \notin \nu\,rad A(y,y)\,\mu$  take any  $\nu' \in \vec{\nu}$ and 
$\mu' \in \vec{\mu}$. We have to show that $A_{\nu\mu}=A_{\nu'\mu'}$. Assume $A_{\nu ' \mu '}$ is properly contained in $ A_{\nu \mu }$. Up to duality $A(x,z)$ is cyclic over $A(x,x)$ and we get $\nu' \mu' = \nu \mu \rho$ for some nilpotent $\rho$. Furthermore we have $\nu'= \alpha \nu \beta$ 
and $\mu'=\gamma \mu \delta$ for some invertible $\alpha ,\beta ,\gamma ,\delta$. This leads to $\alpha \nu \beta \gamma \mu \delta= \nu \mu \rho$ and 
$\alpha \nu \beta \gamma \mu = \nu \beta \gamma \mu \zeta$ for some invertible $\zeta$. If $\mu$ is cotransit we get $\nu \mu \rho= \nu\mu \eta \zeta\delta$ for some invertible $\eta$ whence 
the contradiction $\nu \mu \in \nu \mu \, rad A(x,x)$. Thus $\mu$ is transit and we obtain $\nu \theta \mu= \nu \kappa \mu$ with some nilpotent $\theta$ and some 
 $\kappa =  a id_{y} + \lambda$ where $\lambda$ is nilpotent and $a$ a non-zero scalar. This implies the contradiction  $\nu\mu \in \nu\,rad A(y,y)\,\mu$. Thus we have 
$A_{\nu\mu} \subseteq A_{\nu'\mu'}$ because the subbimodules are linearly ordered by inclusion. For  $\nu'\mu' \in \nu' \,rad A(y,y) \,\mu'$ we get $\nu'' \in \vec{\nu}$ and
 $\mu'' \in \vec{\mu}$ with $0=\nu'' \mu''$ by the first case, whence 
$A_{\nu''\mu''}$ is properly contained in $A_{\nu\mu}$ which is impossible as just shown. Thus we have $\nu'\mu' \notin \nu' \,rad A(y,y) \,\mu'$. Interchanging the roles of the dashed and 
undashed letters we obtain the wanted equality.

b) From a) one gets that $\vec{\nu }( \vec{\mu } \vec{\lambda }) \neq 0$ iff $\nu' (\mu '
\lambda ' )\neq 0$ holds for all $\nu ' \in \vec{\nu}$, $\mu' \in \vec{\mu}$ and $\lambda' \in \vec{\lambda}$ in which case all these products belong to the same ray.
 The analogous statement holds for 
$(\vec{\nu } \vec{\mu }) \vec{\lambda } \neq 0$ and the associativity follows from the associativity of the composition in $A$.

e) The first cancellation law is an immediate consequence of part a). The other two laws follow from the first using part d).
\end{proof}

Observe that the map $\mu \mapsto A_{\mu}$ induces an equivalence  between the categories $\vec{A}$ and
$\hat{A}$ iff the case $\nu \mu \in \nu \,rad\,A(y,y)\,\mu \neq 0$ never occurs.

A base category $C$ satisfying the last two properties listed in the proposition is called an abstract ray-category. It is easy to see that the ray-category attached to 
the  ( distributive ) category
 $kC$ is $C$ again, so that each abstract ray-category comes from a distributive category. In sharp contrast the original distributive category $A$ is in general not equivalent
 to $k\vec{A}$ which we define as the standard form $A^s$ of $A$. We call $A$ standard if we have $A\simeq A^s$. For instance we have in Riedtmanns example $\overrightarrow{kB_{2}} \simeq B_{1}$.

Now the obvious question is why the ray-categories should be better than the stem-categories which are much easier to define. As long as one is 'only' interested in the existence of 
 multiplicative bases the distinction is not  so important. But if one wants to get  universal coverings  with good properties one has to consider ray-categories. This is crucial for 
the proofs 
of BT 0 and BT 2. The different behaviour with respect to coverings can already
 be seen at Riedtmanns example. Whereas $B_{1}$ has a nice universal covering without oriented cycles $B_{2}$ is its own universal covering. We will make this more precise in section 4.

Let $A$ be a distributive category. For a path $v=\alpha_{n}\alpha_{n-1}\ldots \alpha_{2}\alpha_{1}$ in $Q_{A}$ of length $l(v)=n$  we set $\vec{v}=\vec{\alpha}_{n}\vec{\alpha}_{n-1}
\ldots \vec{\alpha}_{2}\vec{\alpha}_{1}$. 
We call $v$ stable 
if $\vec{v}\neq 0$ and a minimal zero-path if $\vec{v}=0$,
 but $\vec{\alpha}_{n}\vec{\alpha}_{n-1}\ldots \vec{\alpha}_{2}\neq 0 \neq \vec{\alpha}_{n-1}\ldots \vec{\alpha}_{2}\vec{\alpha}_{1}$. A non-zero morphism $\mu \,\in \,\vec{A}(x,y)$ is deep 
resp. profound
if it is annihilated by the radicals of $\vec{A}(x,x)$ and $\vec{A}(y,y)$ resp. by all irreducible morphisms. A contour is a pair $(v,w)$ of paths with
$\vec{v}=\vec{w}\neq 0$. The contour $(w,v)$ is reverse to $(v,w)$.  Two paths $v$ and $w$ are interlaced if they are equivalent under the equivalence relation generated by the relation $\sim$ defined by $v \sim w$ iff $v=abc$ and 
$w=adc$ with $\vec{b}=\vec{d}\neq 0$ and $l(a) + l(c)>0$. A contour $(v,w)$ is essential if $v$ and $w$ are not interlaced and deep if $\vec{v}$ is deep. It is clear that the 'kernel' of the natural presentation
$PQ \rightarrow \vec{A}$ is the smallest stable equivalence relation such that each minimal zero-path is equivalent to $0$ and $v$ is equivalent to $w$ for each essential contour. Since 
the spaces $A(x,y)$ and $A^{s}(x,y)$ always have the same finite dimension we get that $A$ is
 isomorphic to $A^s$ iff there is a presentation $\phi$ such that all minimal zero-paths and  all the differences $v-w$ coming from an essential contour $(v,w)$
are annihilated by $\phi$.

\section{The structure of mild k-categories}
\subsection{The main results}
In this section we explain the main results from \cite{BGRS} and their proofs, but in their generalized and simplified versions made possible by \cite{Standard,mild,Fischbacher,Geissm}.

\begin{theorem}\label{multbasis}
 Let $A$ be a distributive category such that the ray-category $\vec{A}$ is mild. Then we have:
\begin{enumerate}
 \item $A$ is standard if the characteristic is not $2$.
\item $A$ has always a filtered multiplicative basis.
\end{enumerate}
\end{theorem}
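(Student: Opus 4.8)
The plan is to reduce both statements to an analysis of the possible essential contours in $\vec A$ and then to invoke the combinatorial classification coming from the mildness hypothesis. First I would recall from Section~2.3 that $A$ is isomorphic to its standard form $A^s = k\vec A$ precisely when there is a presentation $\phi: kQ_A \to A$ killing every minimal zero-path and every difference $v - w$ arising from an essential contour $(v,w)$; and that $A$ always has a filtered multiplicative basis as soon as one such presentation exists (since then $\phi$ carries the ray-basis of $k\vec A$ to a basis of $A$ with the multiplicativity and filtration properties inherited from the composition in $\vec A$). So part b) is really the weaker claim that \emph{some} multiplicative basis exists in all characteristics, while part a) is the stronger claim that the \emph{obvious} (standard) one works when $\mathrm{char}\,k \neq 2$. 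The common core is: control the essential contours.

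The key step is the structural analysis of $\vec A$ under the mildness assumption. Here I would use the results surveyed from \cite{BGRS, mild, Fischbacher} to show that a mild ray-category has only very restricted essential (and in particular deep essential) contours — informally, each essential contour $(v,w)$ is, up to interlacing, of one of a short list of standard shapes, and the two paths differ by a scalar that can be normalized. For the minimal zero-paths there is nothing to arrange: any presentation kills them since they map to $0$ in $\vec A$ by definition. For the essential contours, one argues that the coefficients $c_{(v,w)}$ with $\phi(v) = c_{(v,w)}\,\phi(w)$ can be simultaneously normalized to $1$ by a change of presentation — i.e. by rescaling the arrows — provided a certain system of multiplicative equations over $k^\times$, indexed by the contours, is solvable. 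This is where the characteristic enters: the obstruction to solvability lives in a group that is $2$-torsion (the Riedtmann example $B_1$ vs.\ $B_2$ is the prototype, where the discrepancy is exactly a sign), so when $\mathrm{char}\,k \neq 2$ one can extract the needed square roots and the normalization succeeds, giving $A \simeq A^s$ and hence part a).

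For part b) in characteristic $2$, where the normalization may genuinely fail, the plan is different: instead of forcing the ray-basis of $k\vec A$ to work, one builds a multiplicative basis adapted to the actual presentation of $A$. Concretely, one chooses for each pair $(x,y)$ a path in $Q_A$ representing each nonzero ray and takes the images of these paths; using the restricted contour structure one shows that the only ambiguity is the scalar relating the two sides of an essential contour, and one simply makes a consistent choice of representative path for each ray so that the resulting set is closed under multiplication (nonzero products of basis elements are again basis elements, up to the already-accepted scalars being absorbed by fixing representatives), contains the identities, and is filtered by radical powers. The main obstacle, in both parts, is exactly the contour analysis: proving that mildness of $\vec A$ forces the essential contours to be so sparse and so standardized that the scalar system is either trivially solvable (char $\neq 2$) or solvable after re-choosing representatives (char $2$). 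Everything else — that minimal zero-paths cause no trouble, that a presentation killing the contour differences yields the basis, that the filtration is automatic — is formal and follows from Section~2.3 together with Proposition~\ref{ray}.
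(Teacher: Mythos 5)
Your proposal contains two significant errors that would derail the argument.

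First, you claim that ``for the minimal zero-paths there is nothing to arrange: any presentation kills them since they map to $0$ in $\vec A$ by definition.'' This is false. The fact that $\vec v = 0$ means only that \emph{some} representatives in the rays compose to zero, not that $\phi(v) = 0$ for an \emph{arbitrary} presentation $\phi$. Indeed, the paper devotes a whole step to the minimal zero-paths which are not annihilated by every presentation (called \emph{critical paths}), and Theorem~\ref{zero} is needed to show each critical path has length $2$ and that distinct ones share no arrow, so that one can fix a presentation killing them all simultaneously. Without this, the base presentation you start your contour-normalization from is not available.

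Second, you locate the characteristic-$2$ obstruction in a cohomology group being ``$2$-torsion.'' This is not how the argument goes. The relevant cohomology group $H(P)$ vanishes outright for weakly zigzag-finite ray-categories (Roiter's vanishing theorem, Theorem~\ref{vanishing}), in every characteristic; that step (Step~3) therefore works whenever $\mathrm{char}\,k$ is anything. The actual role of the characteristic is in Step~2, which handles the \emph{non-deep} essential contours (classified by Theorem~\ref{structure} as penny-farthings, dumb-bells, diamonds): for a penny-farthing $C$ one must normalize by setting $\phi'(\rho_C)=(a\,\mathrm{id}+\tfrac{b}{2a}\phi(\rho_C))\phi(\rho_C)$, and the factor $\tfrac{1}{2}$ is the obstruction. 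You also blur the crucial deep/non-deep distinction on which the whole three-step architecture of the paper rests: Step~2 disposes of non-deep contours (and needs $\mathrm{char}\,k\neq 2$ for the bad penny-farthings), while Step~3 handles the deep contours via the contour function and the vanishing theorem, independently of characteristic. Your part~b) is likewise too vague: the paper's route in characteristic $2$ goes through Theorem~\ref{char2}, identifying $A$ with $k\vec A_{\mathcal N}$, a linearization of an explicit base category that carries the filtered multiplicative basis directly. Your sketch gives no mechanism for reconciling the surviving penny-farthing scalars, which is exactly where the work is.
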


In particular by this and the next theorem there are only finitely many isomorphism classes of representation-finite algebras in each dimension. This answers a question asked by Gabriel in \cite{Gabaus}.

\begin{theorem}\label{faithful} Let $A$ be a distributive category.
 \begin{enumerate}
\item $A$ is mild resp. locally representation-finite iff $A^s$ is so. In that case the Auslander-Reiten quivers  are isomorphic.
\item If $A$ is mild and if it has a faithful indecomposable it is standard. In particular a minimal representation-infinite algebra is standard.
\end{enumerate}
\end{theorem}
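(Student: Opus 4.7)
The plan is to use Theorem \ref{multbasis} as a bridge between $A$ and its standard form $A^s=k\vec{A}$. Both categories share the ray-category $\vec{A}$, and by Theorem \ref{multbasis}(b) both carry filtered multiplicative bases indexed by the nonzero rays. Thus $A$ is obtained from $A^s$ by twisting the basis compositions by nonzero scalars $c(\vec{\nu},\vec{\mu})$ satisfying a $2$-cocycle identity, and $A \simeq A^s$ is equivalent to this cocycle being a coboundary on the arrows of $\vec{A}$.

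For part (a), the key observation is that for a mild ray-category $\vec{A}$ the classification of indecomposable modules is a combinatorial invariant of $\vec{A}$ alone: every indecomposable is determined, via the multiplicative basis, by a finite configuration of vector spaces and matrices fitting the ray-category relations, and the specific twist scalars only contribute an overall rescaling of each module's basis-morphism matrices that disappears up to isomorphism. Consequently $A$ and $A^s$ carry the same set of indecomposable isoclasses with matching endomorphism rings and morphism spaces; irreducible morphisms and almost-split sequences transport between the two, so their AR-quivers coincide and mildness and local representation-finiteness pass back and forth.

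For part (b) the task is to trivialize the cocycle $c$, and the main obstacle is to do this in all characteristics, since Theorem \ref{multbasis}(a) yields standardness directly only outside characteristic $2$. I would fix a faithful indecomposable $M$ and adapted bases of each $M(x)$. Faithfulness forces every representing matrix of a basis morphism to be nonzero, and indecomposability forces $\mathrm{End}_A(M)$ to be local. The nonvanishing allows one to read off the twist scalars $c$ directly from the matrices representing $M$, and locality forces the holonomy of $c$ around every essential contour to be trivial --- otherwise the contour relation would yield a nonzero, non-nilpotent, non-scalar endomorphism of $M$, contradicting locality. Hence $c$ is a coboundary and $A \simeq A^s$.

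For the final assertion, a minimal representation-infinite algebra is distributive and admits a faithful indecomposable: its finite ideal lattice (Proposition \ref{JansKupisch}(a)) together with the local representation-finiteness of each proper quotient prevents every indecomposable from having a nontrivial annihilator, for otherwise only finitely many isoclasses of indecomposables would exist, contradicting representation-infiniteness. Part (b) then applies and the algebra is standard.
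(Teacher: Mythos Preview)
Your argument rests on a premise that is false in the only case that matters. You assert that, via the filtered multiplicative bases of Theorem~\ref{multbasis}(b), the category $A$ is obtained from $A^s=k\vec{A}$ by twisting the basis compositions by nonzero scalars $c(\vec{\nu},\vec{\mu})$. But in characteristic~$2$ the multiplicative basis produced for $A$ comes from the \emph{stem}-category $\hat{A}$, not from $\vec{A}$ (see Theorem~\ref{char2}), and the stem composition differs from the ray composition structurally, not merely by scalars: a product that is zero in $\vec{A}$ can be a nonzero basis element in $\hat{A}$. Riedtmann's example in figure~1 already shows this ($\alpha\beta=0$ in $B_1=\vec{A}$ but $\alpha\beta=\alpha\gamma\beta\neq 0$ in $B_2$). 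So there is no $2$-cocycle relating $A$ to $A^s$ in the way you need, and both your part~(a) argument (``twist scalars disappear up to isomorphism'') and your part~(b) argument (``trivialize the cocycle via $M$'') collapse. Indeed, if twist scalars really could be absorbed into module bases as you claim in~(a), you would get an equivalence $A\text{-mod}\simeq A^s\text{-mod}$ and hence $A\simeq A^s$ in all cases, contradicting the very example the theorem is designed to accommodate. There is also a circularity: Theorem~\ref{multbasis} assumes $\vec{A}$ is mild, which is precisely what part~(a) is supposed to establish from the mildness of $A$.

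The paper's proof proceeds quite differently. It first shows (b) under the hypothesis that $\vec{A}$ is mild, by observing that the only obstruction to standardness is a penny-farthing with $\vec{\alpha}_1\vec{\rho}\vec{\alpha}_n\neq 0$; it then invokes the structural Theorem~\ref{MIPF} on the neighborhood of such a penny-farthing and checks in each of the resulting three cases, by direct computation of the Auslander--Reiten quiver, that no faithful indecomposable can exist. Part~(a) is then deduced from~(b) by an induction on the ideal lattice (using Proposition~\ref{ray}(c)): if $A$ is representation-finite then every proper quotient of $\vec{A}$ is representation-finite by induction, so $\vec{A}$ is mild; were it minimal representation-infinite it would have a faithful indecomposable and (b) would force $A\simeq k\vec{A}$, a contradiction. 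The isomorphism of Auslander--Reiten quivers is obtained from the same case analysis around penny-farthings, not from any cocycle argument.
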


The central statement is part a) of theorem \ref{multbasis}.
To describe its proof let $Q$ be the common quiver of $A$ and $\vec{A}$. If $\phi: kQ \rightarrow A$ and $\phi': kQ \rightarrow A$ are
 two presentations of $A$,
the rays of $\phi(\alpha)$
and of $\phi'(\alpha)$ coincide and we call this ray $\vec{\alpha}$. One gets all presentations just by all choices of elements in the various rays corresponding to the arrows of
$Q$. As explained at the end of the last paragraph 
 we have to find a presentation $\phi$  that annihilates all minimal zero-paths and  all the differences $v-w$ coming from essential contours $(v,w)$. Starting from an arbitrary presentation we will achieve this  in three steps explained in the sections 3.3 to 3.5.

Step 1: The  minimal zero-paths have length 2 and two different ones have no arrow in common, so that by changing the choice for one arrow in each minimal zero-path one can annihilate all of them.

Step 2: There are only three types of non-deep essential contours allowed and they are pairwise disjoint. In $char \,k\neq 2$ there is for each such contour a new choice of one arrow
 occuring in
 the contour - but not in a minimal zero-path - 
 such that $v-w$ is annihilated. 

Step 3: For each arrow $\alpha$ we multiply the element $\phi(\alpha) \in \vec{\alpha}$ chosen before by an appropriate non-zero scalar to annihilate the differences $v -w$ for all contours.
The existence of these scalars is equivalent to the vanishing of a certain cohomology group.

The proof of step 3 given on three pages in \cite[8.3 - 8.6]{BGRS} is very elegant whereas the  proofs of the first two steps require a careful local analysis of $\vec{A}$ 
 affording some endurance which is only at the very end 
rewarded by the nice
 structural
 results one obtains. The main working tool for the proofs of the first two steps is described in the next section: the cleaving diagrams due to Bautista, Larri\'{o}n and Salmer\'{o}n \cite[section 3]{BGRS}.

\subsection{Cleaving diagrams}

A diagram $D$ in a ray-category $P$ is just a covariant functor $ F:D \rightarrow P$ from another ray-category $D$ to $P$ respecting zero-morphisms. Then $F$ is called 
cleaving  iff it satisfies 
the following two conditions and their duals: a) $F\mu =0$ iff $\mu=0$;
b) If $\alpha \epsilon D(x,y)$ is irreducible and $F\mu:Fx \rightarrow Fz$ factors through $F\alpha$ then $\mu$ factors already through $\alpha$. These conditions are very easy to verify.

For any diagram $F:D \rightarrow P$ the restriction $F^{\bullet}: P-Mod \rightarrow D-Mod$ has a left adjoint $F_{\bullet}$ and if $F$ is cleaving, any $M$ in $D-mod$ is a direct summand of 
$F^{\bullet}F_{\bullet}M$ \cite[sections 3.2,3.8]{BGRS}. It follows that $P$ is not locally representation-finite resp. satisfies BT 2 if $D$ does so.

 In this article $D$ will always be given by its
 quiver $Q_{D}$,
 that has no oriented cycles, and some relations. Two  paths between the same points give always the same morphism, and zero relations are written down explicitely.
 The diagram $F:D \rightarrow P$ is then defined by drawing the quiver of $D$ with relations and by writing the morphism $F\alpha$ in $P$ close to each arrow $\alpha$. 

For  example let $D$ be the ray-category with the natural numbers $0,1,\ldots $ as objects and with arrows $2n \rightarrow 2n+1$ and $2n+1 \leftarrow 2n+2$ for all $n$ or let $D_{k}$ be
 the full subcategory of $D$ supported by the natural numbers $\leq k$ where $k\geq 1$. Then a cleaving functor 
from $D$ resp $D_{k}$ to $P$ is called an infinite zigzag resp. a zigzag of length $k$.\vspace{0.5cm}

\setlength{\unitlength}{0.8cm}
\begin{picture}(20,2)

\put(1,2){\vector(1,-1){1}}

\put(3,2){\vector(-1,-1){1}}\put(0.9,1.5){$\rho_{1}$}

\put(3,2){\vector(1,-1){1}}\put(1.9,1.5){$\rho_{2}$}
\put(5,2){\vector(-1,-1){1}}\put(2.9,1.5){$\rho_{3}$}

\put(5,2){\vector(1,-1){1}}
\put(7,2){\vector(-1,-1){1}}

\put(7,2){\vector(1,-1){1}}\multiput(9,1.5)(0.5,0){10}{-}
\put(6,0){figure 2}
\end{picture}\vspace{0.5cm}  A functor from $D$ resp. $D_{k}$ to $P$
 is just an infinite resp. finite sequence of morphisms
 $(\rho_{1},\rho_{2},\ldots   )$ in $P$ such 
that $\rho_{2i}$ and $\rho_{2i+1}$ always have common domain and $\rho_{2i-1}$ and $\rho_{2i}$ common codomain.
 The  functor is cleaving iff none of the $\rho_{i}$  factors through one of its 'neighbored' morphisms.  The domain of $\rho_{1}$ is called the start of the zigzag.
A crown in $P$  is a zig-zag that becomes periodic after n steps, i.e. one has $\rho_{1}=\rho_{n+1}$ for some even $n\geq 4 $. A  zigzag is called low if none of the $\rho_{i}$ is a 
profound morphism. The ray-category $P$ is zigzag-finite resp. weakly zigzag-finite if in each point only finitely many zigzags resp. low zigzags start.
By Königs graph theorem this means that there  is no infinite zigzag in $P$ resp. no infinite low zigzag.  For instance any mild ray-category is weakly zigzag-finite.

In \cite[section 3]{BGRS} the cleaving functors are defined for $k$-linear categories and applied in that context. There the definition and the verification that a given
 functor is cleaving are
 much more difficult. Therefore it is important that the whole proof of theorem \ref{multbasis} can be done at the elementary combinatorial level whereas the transfer from $\vec{A}$ to $A$
is postponed to theorem \ref{faithful} which is also proved by elementary means. The possibility to proceed like that is
 already mentioned
 in \cite[section 3.8 c)]{BGRS}, but  there it would require some non-elementary results from \cite{Coverings,Bretscher} and the proof of theorem \ref{faithful} even depends on  BT 2.
  
 \vspace{0.5cm}.

\subsection{Zero relations and critical paths}
There are two types of minimal zero-paths $v=\alpha_{n}\alpha_{n-1}\ldots \alpha_{2}\alpha_{1}$ in $A$. Either $v$ is annihilated by each presentation $\phi$ or not in which case we call $v$
a critical path. 

\begin{theorem}\label{zero} Let $\vec{A}$ be mild. then we have:
\begin{enumerate}
 \item ( Structure theorem for critical paths ) Each critical path has length $2$.
\item ( Disjointness theorem for critical paths ) Two critical paths sharing an arrow are equal.
\end{enumerate}
\end{theorem}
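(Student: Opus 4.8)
The plan is to prove both statements simultaneously by a careful local analysis of the ray-category $\vec A$ near a critical path, exploiting the fact that a critical path, being a minimal zero-path in $A$ that is not annihilated by some presentation, forces the appearance of a non-trivial contour. Suppose $v=\alpha_n\cdots\alpha_1$ is a critical path. Since $\vec v=0$ but every proper consecutive subpath has non-zero ray, part a) of Proposition \ref{ray} gives, for a suitable choice of representatives, that $\phi(\alpha_n)\cdots\phi(\alpha_1)=0$ forces a relation in $A$ that is \emph{not} a consequence of zero-relations on shorter paths; concretely, because $v$ is critical there is another path $w$ from the source to the target of $v$ with $\phi(w)\neq 0$ spanning the same one-dimensional subbimodule as some twist of $v$ would, i.e. $v$ sits inside a contour $(v,w)$. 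The first step is therefore to extract this companion path $w$ and to record that $\vec w\neq 0$ while $\vec v=0$, together with the constraint (coming from minimality of $v$ as a zero-path) that no shorter subpath of $v$ can be rerouted.

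The second step is to build cleaving diagrams out of this configuration whenever $n=l(v)\geq 3$, using the machinery of Section 3.2. The idea is that a long critical path, together with the companion path $w$ and the irreducible morphisms hanging off its vertices, produces an infinite zigzag (or a crown) in $\vec A$: one alternates between pieces of $v$ and pieces of $w$, and the non-factorization conditions a) and b) for a cleaving functor are forced precisely by the criticality hypothesis (that $v$ is a \emph{minimal} zero-path, so its subpaths do not factor through neighbouring morphisms, and that $\phi(w)\neq 0$ keeps the companion branch non-zero). Since $\vec A$ is mild it is weakly zigzag-finite, hence zigzag-finite is violated — more precisely, one checks that the zigzag produced is low, so that weak zigzag-finiteness already yields a contradiction. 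This forces $l(v)=2$, proving part a). For part b), assume two critical paths $v=\beta_2\beta_1$ and $v'=\gamma_2\gamma_1$ share an arrow. If they share $\beta_1=\gamma_1$ (or dually $\beta_2=\gamma_2$) but $\beta_2\neq\gamma_2$, then the two companion paths $w,w'$ plus the arrows $\beta_2,\gamma_2$ can be assembled into a short zigzag which, iterated, again contradicts weak zigzag-finiteness of the mild ray-category $\vec A$; one must also rule out the case $\beta_1=\gamma_2$ (the arrow occurs in the first position of one and the second of another), which is handled by the same kind of cleaving-diagram argument combined with the structure theorem a) already established.

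The main obstacle, as in \cite{BGRS}, will be the case analysis in step two: one has to enumerate the finitely many local shapes that the arrows around $v$ and $w$ can take in a mild ray-category and verify in each case that a genuine cleaving functor (satisfying conditions a), b) and their duals) can be written down. This is exactly the ``careful local analysis affording some endurance'' referred to before the statement of Theorem \ref{zero}; the payoff is that, once the zigzag or crown is exhibited, the contradiction with mildness via weak zigzag-finiteness (König's lemma) is immediate. A secondary technical point is keeping track of which representatives in the rays $\vec\alpha_i$ are chosen, so that the non-zero/zero dichotomy of Proposition \ref{ray}a) is applied consistently; but this is bookkeeping rather than a real difficulty. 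I would organize the write-up so that the cleaving-diagram constructions for a) and for the two sub-cases of b) are presented as separate lemmas, each reducing to ``here is an infinite low zigzag, contradiction''.
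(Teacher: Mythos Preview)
Your overall strategy—local analysis near the critical path followed by cleaving diagrams that contradict mildness—is the approach the paper indicates (it does not give a proof but refers to \cite[section 4]{BGRS} and \cite[13.11]{Buch}, calling it ``a relatively easy and short application of the technique of cleaving diagrams'').

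There is, however, a genuine error in your first step. You claim that a critical path $v$ ``sits inside a contour $(v,w)$'' with $\vec w\neq 0$. But by definition a contour is a pair of paths with $\vec v=\vec w\neq 0$, whereas $\vec v=0$ for any zero-path; so no such contour exists. The correct starting observation is the one the paper records immediately after the theorem: since $v=\alpha_n\cdots\alpha_1$ is a minimal zero-path with $\phi(v)\neq 0$ for some presentation, Proposition~\ref{ray}(a) applied to the factorisation $\nu\mu$ with $\nu=\phi(\alpha_n\cdots\alpha_2)$, $\mu=\phi(\alpha_1)$ (and dually) shows that $\nu\mu\in\nu\,\mathrm{rad}\,A(y,y)\,\mu$ with the right-hand side non-zero. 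Unwinding this, one gets that $\vec\alpha_1$ is \emph{not cotransit} and $\vec\alpha_n$ is \emph{not transit}. It is these transit/cotransit failures—not a companion path forming a contour—that supply the extra irreducible morphisms needed to build the cleaving diagram. Your plan as written would not get off the ground because the object you propose to extract does not exist.

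A smaller point: the contradiction with mildness does not come through weak zigzag-finiteness. The cleaving diagrams one builds are small representation-infinite patterns (crowns, or quivers of type $\tilde D$, etc.), and the mechanism is that a cleaving functor $D\to P$ with $D$ representation-infinite forces $kP$ to be representation-infinite; one arranges that the diagram lives in a proper quotient of $\vec A$, contradicting mildness directly. Framing everything as ``infinite low zigzag, contradiction'' is not quite the right lever.
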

 The proof of this theorem is a relatively easy and short application of the technique of cleaving diagrams. It is sketched in \cite[section 13.11]{Buch} and given with full details in
 \cite[section 4]{BGRS}. Note that for a critical path $\alpha_{2}\alpha_{1}$ the first ray $\vec{\alpha}_{1}$ is not cotransit, the second not transit.
 
Now for each critical path $v$ the initially given presentation can be changed at one arrow of the path to annihilate $v$ and this can be done for all critical paths at once because of the 
disjointness theorem. We end up with a presentation $\phi$ annihilating all zero-paths.

\subsection{Commutativity relations and non-deep contours}
For a contour $C$ we denote by $P(C)$ the full subcategory of $\vec{A}$ supported by the points occuring as starting or ending points of arrows in $v$ or $w$ and by $Q(C)$ the quiver of 
$P(C)$ which is in general not a subquiver of $Q$.

Figure 3 describes three  ( families of ) ray-categories by quiver and relations. Each of these contains a non-deep contour $(v,w)$ and $\vec{v}$ is always bitransit.
 For obvious reasons the contours $C$ as well as the  categories $P(C)$ are called  penny-farthings, dumb-bells and diamonds respectively.

\setlength{\unitlength}{1cm}
\begin{picture}(15,8)

\put(8,4){\circle*{0.1}}
\put(7,2){\circle*{0.1}}
\put(5,1){\circle*{0.1}}
\put(0,4){\circle*{0.1}}
\put(3,1){\circle*{0.1}}
\put(1,2){\circle*{0.1}}
\put(1,6){\circle*{0.1}}
\put(3,7){\circle*{0.1}}
\put(5,7){\circle*{0.1}}
\put(7,6){\circle*{0.1}}
\put(8.5,4){\circle{1}}
\put(7,6){\vector(1,-2){1}}
\put(8,4){\vector(-1,-2){1}}
\put(8.0,3.9){\vector(0,1){0.1}}
\put(9.2,4){$\rho$}
\put(5,7){\vector(2,-1){2}}
\put(7,2){\vector(-2,-1){2}}
\put(5,1){\vector(-2,0){2}}
\put(3,7){\vector(2,0){2}}
\put(3,1){\vector(-2,1){2}}
\put(1,6){\vector(2,1){2}}
\multiput(0,4)(0.1,0.2){10}{\circle*{0.01}}
\multiput(0,4)(0.1,-0.2){10}{\circle*{0.01}}

\put(7,5){$\alpha_{n}$}
\put(7,3){$\alpha_{1}$}
\put(5.8,3.9){$x=y=x_{0}$}
\put(6.5,2){$x_{1}$}

\put(5.8,6){$x_{n-1}$}
\put(8,3){$n\geq 1,\alpha_{1}\alpha_{n}=0$}
\put(8,2.5){$v=\alpha_{n} \ldots \alpha_{1}, w =\rho^{2}$}
\put(7.5,2){$ 0= \alpha_{e(i)}\ldots \alpha_{1}\rho\alpha_{n}\ldots \alpha_{i+1}$}
\put(7,1.5){$e:\{1, \ldots ,n-1\} \rightarrow \{1,\ldots ,n\}$}
\put(7,1){$e$ non-decreasing}

\end{picture}

\setlength{\unitlength}{0.5cm}
\begin{picture}(20,7)

\put(11,4){\circle*{0.2}}
\put(15,7){\circle*{0.2}}
\put(15,1){\circle*{0.2}}
\put(19,4){\circle*{0.2}}
\put(11,4.2){\vector(4,3){3.8}}
\put(15,6.8){\vector(-4,-3){3.8}}
\put(11,4){\vector(4,-3){3.8}}
\put(15,1){\vector(4,3){3.8}}
\put(18.8,4){\vector(-4,3){3.8}}
\put(15.2,7){\vector(4,-3){3.8}}

\put(10,4){$x$}
\put(19.5,4){$y$}
\put(15.5,7.2){$z$}
\put(15.5,0.7){$t$}

\put(12,5.5){$\gamma$}
\put(14,5.5){$\lambda$}
\put(16,5.5){$\kappa$}
\put(17.5,5.5){$\alpha$}

\put(12,2.5){$\delta$}
\put(17.5,2.5){$\beta$}

\put(11,0){$v=\beta\delta, w = \alpha \gamma$,}
\put(16,0){$\lambda \kappa =0,$}
\put(19,0){$\kappa \alpha= \gamma \lambda$}

\put(1,4){\circle*{0.1}}
\put(3,4){\circle*{0.1}}
\put(0.5,4){\circle{1}}
\put(3.5,4){\circle{1}}
\put(1,4){\vector(2,0){2}}
\put(1,4.1){\vector(0,-1){0.1}}
\put(3,4.1){\vector(0,-1){0.1}}
\put(0.5,3.9){$x$}
\put(3.3,3.9){$y$}
\put(1,4.5){$\lambda$}
\put(2,4.5){$\mu$}
\put(3,4.5){$\rho$}
\put(1,3){$v=\mu\lambda,w = \rho \mu$}
\put(1,2){$\lambda^{r} =0=\rho^{s}$}
\put(0,1){ $min \{r,s\} =3, max \{r,s\} \leq$ 5}
\put(9,-2){figure 3}
\end{picture}\vspace{2cm}

\begin{theorem}\label{structure} Let $A$ be a distributive category such that  $\vec{A}$  is mild. Then we have:
\begin{enumerate}
 \item ( Structure theorem for non-deep contours ) Any non-deep essential contour of $A$ is equal or reverse to a dumb-bell, a penny-farthing or a diamond.
\item ( Disjointness theorem for non-deep contours ) Two  essential non-deep contours  sharing  a point are equal or reverse.
\end{enumerate}

\end{theorem}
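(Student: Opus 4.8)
The plan is to run a systematic local analysis of a non-deep essential contour $(v,w)$ inside the mild ray-category $\vec A$, using cleaving diagrams (section 3.2) as the main obstruction to the existence of large or overlapping contours. Since $(v,w)$ is essential, $v$ and $w$ are not interlaced; since it is non-deep, $\vec v=\vec w$ is not annihilated by the radicals of the endomorphism semigroups at its endpoints. The first reduction is to understand what such a morphism $\mu=\vec v$ can look like: by Proposition \ref{ray}(d) the sets $\vec A(x,x)$ are cyclic, generated by a single nilpotent $\alpha$, and $\mu$ fails to be killed by $\operatorname{rad}\vec A(x,x)$ or $\operatorname{rad}\vec A(y,y)$. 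I would distinguish the case where $\mu$ is bitransit (cyclic over both endpoint semigroups — this should force $x=y$ and lead to the penny-farthing) from the case where it is transit on one side only (leading to dumb-bells and diamonds). The key technical device is that if a point carried ``too much'' — long loops, several incoming and outgoing arrows participating in the contour, or a second independent path — one could build an infinite zigzag or a crown, contradicting weak zigzag-finiteness of the mild $\vec A$. Concretely, I would extract from the contour a zigzag whose morphisms are the arrows of $v$ and $w$ read alternately, check the non-factorization (cleaving) condition using Proposition \ref{ray}(e) (the cancellation laws), and invoke the fact that a cleaving infinite zigzag or crown would make $\vec A$ not locally representation-finite, hence not mild.

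For part (a), after the bitransit/transit dichotomy I would bound the combinatorial data. In the bitransit case, $\vec v$ being cyclic over the loop algebra at the common endpoint forces $w$ to be a power $\rho^2$ of a loop $\rho$, and $v$ to be the ``long way around'' $\alpha_n\cdots\alpha_1$; the zero-relations $\alpha_1\alpha_n=0$ and the intermediate zero-relations are then forced by minimality and the non-existence of longer stable detours, giving exactly the penny-farthing family. In the one-sidedly transit cases, the same zigzag-finiteness bounds the lengths of $v$ and $w$ and the nilpotency indices of any loops involved: one gets either two disjoint arrows on each side meeting at two points (the dumb-bell, with loops of indices $r,s$ satisfying $\min=3$, $\max\le5$ — the precise numerical bounds coming from the representation-finiteness of the small algebras $P(C)$, checked against Gabriel's list or directly via cleaving subdiagrams) or the hexagonal diamond configuration with $x,y,z,t$. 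Here I would lean on the computations already in \cite[section 5]{BGRS} and \cite{mild}: these are finite case distinctions, and each ``bad'' configuration is eliminated by exhibiting an explicit cleaving diagram of a known representation-infinite (e.g.\ extended Dynkin) shape.

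For part (b), suppose two essential non-deep contours $C_1=(v_1,w_1)$ and $C_2=(v_2,w_2)$ share a point $p$ but are neither equal nor reverse. Using the classification from (a), $P(C_1)$ and $P(C_2)$ are each one of the three explicit shapes, and I would argue that the union $P(C_1)\cup P(C_2)$, glued along $p$ (and possibly along shared arrows — here one invokes Theorem \ref{zero}(b), the disjointness of critical paths, to control shared zero-relations), supports a cleaving functor from an infinite zigzag or a crown, or else directly contains a subcategory not on Gabriel's finite-type list. The overlap analysis is finite: the shared point is either an endpoint or an interior point of each contour, and the loop at a penny-farthing or dumb-bell point already nearly saturates the representation-finiteness budget, so a second contour through it overflows.

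The main obstacle will be part (a) in the one-sidedly transit case: pinning down the \emph{exact} families — in particular the sharp numerical constraints on the penny-farthing zero-relation map $e$ and on the dumb-bell nilpotency indices $(r,s)$ — is not a single clean argument but a careful enumeration, where for each candidate configuration one must either realize it as one of the three named shapes or produce a concrete cleaving diagram certifying representation-infiniteness. This is precisely the ``careful local analysis affording some endurance'' flagged in section 3.1; I would organize it by first fixing the endpoints and the transit side, then the arrows of $v$ and $w$ incident to each endpoint, then the interior, at each stage pruning with a zigzag or crown. Once the structure theorem (a) is in hand, the disjointness theorem (b) is comparatively routine, being a finite check of how two members of a three-element list of small quivers can overlap at a vertex.
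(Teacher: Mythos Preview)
Your basic dichotomy is incorrect: bitransitivity of $\vec v$ does \emph{not} force $x=y$. In fact the paper states explicitly that $\vec v$ is bitransit in all three configurations of figure~3, yet the dumb-bell and the diamond have $x\neq y$. So the case split you propose does not separate the three families, and the argument built on it cannot get started. More seriously, you never exploit the non-deepness of the contour in a structural way, and this is precisely the engine of the paper's proof (following \cite{mild}). Since $\vec v$ is, up to duality, transit, there is a generator $\rho$ of the radical of $\vec A(y,y)$ with $\rho\vec v\neq 0$; writing $\alpha=\vec v_1$, $\beta=\vec w_1$ for the first arrows and $\gamma,\delta$ for the remaining parts of $v,w$, the morphisms $\alpha,\beta,\gamma,\delta,\rho$ assemble into the diagram of figure~4, which contains an obvious subdiagram of type $\tilde D_5$. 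Mildness of $\vec A$ forbids this subdiagram (and certain diagrams derived from it) from being cleaving, and the whole structure theorem is a systematic analysis of how this cleaving condition can fail. Without bringing $\rho$ into the picture you have no handle on non-deepness at all.

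Your alternative idea --- reading the arrows of $v$ and $w$ alternately as a zigzag --- does not work as stated: the arrows of two parallel paths from $x$ to $y$ all point the same way, so they do not form a zigzag in the sense of section~3.2 (where domains and codomains must alternate), and even if one alternated more carefully there is no cancellation argument available to verify the non-factorisation conditions. For part (b) your outline is closer in spirit to what is done, but note that the version proved in \cite{mild} and quoted here treats contours sharing only a \emph{point}, strictly stronger than the arrow-sharing version of \cite{BGRS}; and again the analysis runs through the $\tilde D_5$ mechanism above rather than through ad hoc zigzags.
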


The proof of part a) given in \cite[sections 5 to 7]{BGRS} is complicated especially for the case of diamonds. There is a simpler proof in \cite[section 2]{mild} using the following 
obvious strategy. For a non-deep essential contour $C=(v,w)$ we choose paths $v=v_{1}\ldots v_{n}$ and $w=w_{1}\ldots w_{m}$ from $x$ to $y$. Up to duality it suffices to consider the case
 that  $\vec{v}$ is transit. We also choose a path $p=p_{1}p_{2}\ldots p_{r}$ such that $\rho=\vec{p}$ generates the radical of $\vec{A}(y,y)$ and we use the abbreviations
 $\alpha=\vec{v}_{1}$,$\beta=\vec{w}_{1}$,$\gamma=\vec{v}_{2}\ldots \vec{v}_{n}$ and  $\delta=\vec{w}_{2}\ldots \vec{w}_{m}$. Then the contour induces in $\vec{A}$  the 
 diagram shown in figure 4. 
\vspace{0.5cm}

\setlength{\unitlength}{0.5cm}
\begin{picture}(15,6)
\put(8,5){\vector(4,1){4}}
\put(8,5){\vector(3,-1){3}}
\put(12,6){\vector(1,-1){1}}
\put(11,4){\vector(2,1){2}}
\put(8,2){\vector(4,1){4}}
\put(8,2){\vector(3,-1){3}}
\put(12,3){\vector(1,-1){1}}
\put(11,1){\vector(2,1){2}}
\put(13,5){\vector(0,-1){3}}
\put(8,-1){figure 4}
\put(10,6){$\gamma$}
\put(12.3,5.8){$\alpha$}
\put(10,3){$\gamma$}
\put(12.3,2.8){$\alpha$}
\put(9,4){$\delta$}
\put(9,1){$\delta$}
\put(12,4){$\beta$}
\put(12,1){$\beta$}
\put(13.5,3.5){$\rho$}
\end{picture}

\vspace{1cm}

The proof of part a) is just a careful analysis of the fact that the obvious subdiagram of type $\tilde{D}_{5}$ or some diagrams deduced from it cannot be cleaving if $\vec{A}$  is mild. 
Part b) is proved in \cite{mild} and originally in \cite{BGRS} under the stronger assumption that the contours share an arrow.

By the definition of a non-deep contour $C=(v,w)$ and because $\vec{v}$ is  bitransit there is always an invertible morphism $\xi_{C}$ with $\phi(v)=\xi _{C}\phi(w)$ and in 
case of a penny-farthing $C$ we have $\xi_{C}= a^{2}_{C} id + b_{C}\phi(\rho_{C})$ because of $\phi(\rho_{C})^{4}=0$. To get the new presentation $\phi'$ wanted in step 2 we set
$\phi'(\rho_{C})=\xi_{C}\phi(\rho_{C})$ for each dumb-bell $C$, $\phi'(\alpha_{C})=\xi_{C}\phi(\alpha_{C})$ for each diamond and finally $\phi'(\rho_{C})=(a \, id + \frac{b}{2a}\phi(\rho_{C})
\phi(\rho_{C})$ for each penny-farthing. Here we use the fact $2$ is invertible in $k$. All these choices are independent of each other and we set $\phi'(\zeta)=\phi(\zeta)$ for the
 remaining arrows. Then all $v-w$ coming from non-deep essential contours are annihilated by $\phi'$ and so are all zero-paths since we have only changed arrows that are bitransit.

In a penny-farthing where  $\alpha_{1}\alpha_{n}$ is a zero-path one can  find  in all characteristics a new presentation annihilating $v-w$ by setting
 $\phi'(\alpha_{1})=\phi(\alpha_{1})\xi_{C}^{-1}$.
All critical paths are still annihilated because $\alpha_{1}$ does not lie on a critical path. Thus the only penny-farthings that cause serious trouble in characteristic $2$ are those where
$\alpha_{1}\alpha_{n}$ is a critical path.

\subsection{Contour functions and cohomology}

Let $A$ be a distributive category with ray-category $P=\vec{A}$. A contour function is a map $c$ that assigns to each contour $(v,w)$ of $A$ a non-zero scalar such that $c(u,w)=c(u,v)c(v,w)$ and 
$c(sv,sw)=c(v,w)=c(vt,wt)$ hold whenever this makes sense. A contour function is called exact if there is a function $e$ from the set of arrows of $Q_{P}$ to $k^{\ast}$ such that
$$c(v,w)=\delta(e) :=e(\alpha_{n})e(\alpha_{n-1}) \ldots e(\alpha_{1})(e(\beta_{m})e(\beta_{m-1}) \ldots e(\beta_{1}))^{-1} $$where $v$ and $w$ are the paths $\alpha_{n}\ldots \alpha_{1}$ and
 $\beta_{m}\ldots \beta_{1}$. 

The set $C(P)$ of all contour functions is an abelian group under pointwise multiplication and the set $E(P)$ of all exact contour functions is a subgroup.
As shown in \cite[section 8.1]{BGRS}, the quotient $H(P)$ is isomorphic to the second cohomolgy group $H^{2}(\vec{A},k^{\ast})$ defined in the next section, but this is irrelevant for us.

Now let $\phi$ be a presentation annihilating all zero paths and non-deep contours. Then there is for each contour $(v,w)$ a uniquely determined non-zero scalar $c(v,w)$ such that 
$\phi(v)=c(v,w)\phi(w)$ and this defines a contour function. Namely for a non-deep contour we have $c(v,w)=1$ and for a deep contour $\phi(v)$ and $\phi(w)$ are both generators of the one-dimensional socle of the bimodule 
$A(x,y)$. By the next theorem - called Roiters vanishing theorem in \cite{BGRS} - we have $c=\delta(e)$ for some function $e: Q_{1} \rightarrow k^{\ast}$ and then the new presentation $\phi'$ with $\phi'(\alpha)=e(\alpha )^{-1}\phi(\alpha)$  
induces the wanted isomorphism $k\vec{A} \simeq A$.

If $\vec{A}$ is mild and there are four arrows starting or ending in a point of the quiver then all compositions of irreducible morphisms vanish in $\vec{A}$. Thus we can assume in step 3 that at most 
three arrows start or end in a point.
\begin{theorem} \label{vanishing} If $P$ is a weakly zigzag-finite ray-category such that at most three arrows start or end at a point of its quiver then we have $H(P)=0$.
 
\end{theorem}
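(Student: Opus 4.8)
The plan is to realize the cohomology group $H(P)=C(P)/E(P)$ as a simplicial-type cohomology of the quiver $Q_P$ together with the relations that hold in $P$, and then to kill all cocycles by a spanning-tree argument reinforced by the local structure theorems already available (Theorems \ref{zero} and \ref{structure}). First I would recall the identification $H(P)\cong H^2(\vec A,k^\ast)$ mentioned after the definition of exact contour functions; concretely, a contour function is a $1$-cochain on paths that is trivial on trivial contours, and exactness means it is a coboundary of a $0$-cochain $e:Q_1\to k^\ast$ living on the arrows. So the whole question is whether every ``cocycle relation'' among contours is forced to split once we are allowed to rescale arrows.

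The first main step is to reduce the set of contours one has to control. Every contour $(v,w)$ with $\vec v=\vec w\neq 0$ is interlaced-equivalent, by repeatedly swapping a bitransit subword, to a concatenation of \emph{essential} contours; and an essential contour is either deep or, by Theorem \ref{structure}, equal or reverse to a penny-farthing, dumb-bell or diamond. Hence a contour function is determined by its values on deep essential contours and on the three listed families, subject only to the multiplicativity and left/right-invariance axioms. The hypothesis that at most three arrows meet at each point excludes diamonds outright (a diamond has a vertex of valency $4$) and constrains dumb-bells and penny-farthings. For the remaining essential contours one checks directly that the ``commutativity'' families contribute nothing to $H(P)$: in a dumb-bell the defining relation $\kappa\alpha=\gamma\lambda$ can be normalized by rescaling $\rho$ (exactly the move used in step 2 of Theorem \ref{multbasis}), and similarly for the surviving penny-farthings the scalar $\xi_C$ is absorbed into a rescaling of $\alpha_1$; so after a preliminary global rescaling one may assume $c\equiv 1$ on all non-deep essential contours.

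The second and decisive step handles the deep contours. Here I would use weak zigzag-finiteness: by König's lemma there is no infinite low zigzag, so the ``deep part'' of $P$ has no infinite accessible configuration and one can argue by induction along a well-founded order. Pick a spanning tree $T$ of $Q_P$ and set $e=1$ on the arrows of $T$; for an arrow $\alpha\notin T$ closing a fundamental cycle one wants to define $e(\alpha)$ so as to kill the value of $c$ on the associated contour, and one must check consistency, i.e. that the contours coming from different fundamental cycles impose compatible conditions on the $e(\alpha)$. Consistency is precisely the assertion that the subgroup of $C(P)$ generated by deep contours, modulo $E(P)$, is trivial; this is where the structure theorems are used again, since a deep contour, being annihilated by all irreducible morphisms, is ``short'' in the combinatorial sense and its endpoints are joined by a controlled web of paths, so the cocycle condition closes up. I expect this consistency/compatibility verification — showing that no residual obstruction survives once diamonds are gone and low zigzags are finite — to be the main obstacle; everything else is bookkeeping with the multiplicativity axioms of a contour function and the two disjointness theorems, which guarantee that the local normalizations performed for distinct essential contours do not interfere.
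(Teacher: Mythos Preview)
Your proposal has two genuine gaps, one logical and one substantive.

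The logical gap: Theorems \ref{zero} and \ref{structure}, which you invoke to classify essential contours, both assume that $\vec A$ is \emph{mild}. Theorem \ref{vanishing} is stated under the weaker hypothesis of weak zigzag-finiteness together with the valency bound, so you are not entitled to use the structure theorem for non-deep contours here. Relatedly, the moves you borrow from ``step 2'' (e.g.\ replacing $\phi(\rho_C)$ by $\xi_C\phi(\rho_C)$) are not scalar rescalings of arrows and therefore do not correspond to modifying a contour function by a coboundary $\delta(e)$; they live in the proof of Theorem \ref{multbasis}, not in the proof that $H(P)=0$.

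The substantive gap is the missing key idea. Your spanning-tree approach leaves precisely the hard part unaddressed: you acknowledge that the ``consistency/compatibility verification'' is the main obstacle, but offer no mechanism to perform it, and the remark that deep contours are ``short in the combinatorial sense'' is neither precise nor usable. The number of essential contours can far exceed the number of non-tree arrows, and nothing in your outline explains why the resulting system of multiplicative equations is solvable. The paper's proof proceeds by a completely different device due to Roiter: one manufactures a substitute for a source even when $Q_P$ has oriented cycles. A \emph{tackle} at $y$ is a low zigzag whose last morphism is irreducible; tackles are ordered lexicographically by the word of depths $(d(\rho_1),\ldots,d(\rho_n))$, and weak zigzag-finiteness guarantees an \emph{efficient} (maximal) tackle exists at each point where any tackle starts. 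The key Lemma \ref{keylemma} then shows that the last arrow $r_n$ of an efficient tackle can occur only at an extremity of any essential contour in which it appears. This makes the domain of $w_1$ behave like a source, and the easy induction from the directed case (sketched in \cite{Zyk}) goes through via the arguments of \cite[8.5--8.6]{BGRS}, where the valency bound is actually used. Your proposal contains no analogue of this ordering-and-extremality argument, and without it the induction has nothing to induct on.
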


I give some details of the proof for two reasons: First, it is in my opinion the most ingenious single argument of the article on multiplicative bases, and second, the above statement is more general
 than the vanishing theorem proven in \cite{BGRS} only for zigzag-finite ray-categories. This generalization is due to Geiss who observed in its unpublished  'Diplomarbeit' that the
 proof of \cite{BGRS} still works for weakly zigzag-finite ray-categories. The reader should be warned that we have to change the definitions of \cite{BGRS} slightly to keep the arguments
 valid.

If the quiver of $P$ is finite and contains no oriented cycle, the proof of the vanishing theorem is easy and it is given in \cite{Zyk}. One considers a source in the quiver 
and proceeds by induction. This should  also work in the general situation, but - to cite A.V.Roiter - the question is: Induction on what?

Well, here are the definitions needed to create a kind of source or  sink in a quiver with oriented cycles. A tackle of length $n$ with start in $y$ is just a low zigzag
$z=(\rho_{1},\rho_{2}, \ldots ,\rho_{n})$ with start in $y$ whose last morphism $\rho_{n}$ is irreducible. The efficiency $e(z)$ of the tackle is the word 
$(d(\rho_{1}),d(\rho_{2}),\ldots ,d(\rho_{n}))$ and we order these words lexikographically. The tackle is efficient if its efficiency is maximal among the efficiencies of the
tackles starting 
in $y$. If $P$ is mild only finitely many tackles start in a fixed point so that there is always an efficient tackle as soon as there is one. The key lemma reads as follows:

\begin{lemma}\label{keylemma}
 Let $P$ be a ray-category and let $z=(\rho_{1},\rho_{2}, \ldots ,\rho_{n})$ be an efficient tackle starting in $y$. Choose paths $r_{i}$ with $\vec {r}_{i}=\rho_{i}$. Suppose that 
$(v=qr_{n}p, w=w_{m}w'w_{1})$ is an essential contour with paths $q,p,w'$ and arrows $w_{1},w_{m}$. Then $p$ resp. $q$ has length $0$ if $n$ is odd resp. even.
\end{lemma}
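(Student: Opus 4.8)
The plan is to argue by contradiction: suppose $z=(\rho_1,\dots,\rho_n)$ is an efficient tackle starting in $y$, that $\rho_n$ is irreducible, and that there is an essential contour $(v,w)$ with $v=qr_n p$, $w=w_mw'w_1$, $\vec{r}_n=\rho_n$, and, say, $n$ odd but $l(p)>0$ (the case $n$ even being dual, since one may reverse the tackle by exchanging domains and codomains). The idea is that the existence of the path $p$ of positive length sitting below $\rho_n=\vec{r}_n$ lets us \emph{extend} or \emph{modify} the tackle into a tackle starting in $y$ of strictly larger efficiency, contradicting maximality. Since the efficiency is the word $(d(\rho_1),\dots,d(\rho_n))$ read lexicographically, I want the new tackle to agree with $z$ on a first block $(\rho_1,\dots,\rho_{i-1})$ and then have a strictly larger value of $d$ at position $i$, for some $i\le n$.

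\textbf{Key steps.} First I would use the non-deep/essential contour $(v,w)$ together with $\vec{r}_n=\rho_n$ and the diagram it induces (as in figure 4 and the factorization language of Proposition~\ref{ray}) to produce, from the initial arrow of $p$, an irreducible morphism $\sigma$ that can legitimately be appended to the zigzag $(\rho_1,\dots,\rho_n)$ at the $y$-end \emph{or} inserted so as to raise some $d(\rho_i)$. Concretely: because $v=qr_np$ and $w=w_mw'w_1$ give $\vec{v}=\vec{w}\ne 0$, the last arrow of $p$ does not factor through the neighboring morphism of $z$ in the appropriate direction — here one invokes the cleaving condition (b) and the cancellation laws in Proposition~\ref{ray}(e) — so prolonging the zigzag by that arrow is again a (low) zigzag. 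Second, I would check the low-ness: none of the newly involved morphisms is profound, because the contour $(v,w)$ being essential forces $v$ and $w$ not to be interlaced, and the segment through $r_n$ is genuinely "used", which rules out profoundness by the same local analysis used in Theorem~\ref{structure}. Third, I would compare efficiencies: the new tackle shares the prefix $(\rho_1,\dots,\rho_{i-1})$ with $z$ for the relevant $i$, and the presence of the extra arrow of $p$ strictly increases $d(\rho_i)$ — here $d$ counts, in the usual BGRS sense, how deep a morphism sits, and the path $p$ of positive length below $\rho_n$ forces $d(\rho_n)$ (or some earlier $d(\rho_i)$, if the modification happens inside the tackle) to be strictly larger than the value recorded in $e(z)$. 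That contradicts the efficiency of $z$.

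\textbf{Main obstacle.} The delicate part is making the "extend or insert" dichotomy precise and showing that in \emph{both} of those cases the resulting zigzag is still \emph{low} (no profound morphism appears) and still a genuine tackle (last morphism irreducible). The subtlety is that appending the first arrow of $p$ at the $y$-end might create a profound morphism, in which case one must instead exploit the contour relation to \emph{re-route} the tackle through $q$ or through a subpath of $w$, and argue that \emph{that} re-routed tackle is low and more efficient; disentangling which case occurs requires exactly the kind of careful local bookkeeping of transit/cotransit/bitransit morphisms (Proposition~\ref{ray}(d),(e)) and of minimal zero-paths (Theorem~\ref{zero}) that pervades this part of \cite{BGRS}. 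I also expect a bit of care is needed at the very first morphism $\rho_1$: if the modification would have to happen at position $1$ there is no prefix to preserve, and one must check separately that the start is still $y$ and that $d(\rho_1)$ genuinely increases rather than merely changing the later letters. Once these case distinctions are organized — using that $P$ is mild, hence weakly zigzag-finite, so only finitely many low zigzags start at $y$ and an efficient tackle exists whenever any tackle does — the contradiction is immediate and the lemma follows.
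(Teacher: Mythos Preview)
Your overall strategy---assume $l(p)>0$ and produce a tackle of strictly larger efficiency---is the right one, but your plan misses the central mechanism and will not close as written. You speak as if a single modification (append an arrow of $p$, or re-route once through $q$ or $w$) yields a more efficient tackle; in fact the first candidate one writes down, namely $z_n=(\rho_1,\dots,\rho_{n-1},\vec q\rho_n,\vec w_m)$, need \emph{not} be a zigzag at all: the cleaving condition may fail between positions $n-1$ and $n$, i.e.\ $\vec q\rho_n$ may factor through $\rho_{n-1}$. The paper's proof turns exactly this failure into the engine of an inductive descent: if $z_n$ is not a zigzag, the factorization $\vec q\rho_n=\vec q_{n-2}\rho_{n-1}$ (which exists precisely because $(v,w)$ is essential, hence $v,w$ are not interlaced) lets one form the shorter candidate $z_{n-1}=(\rho_1,\dots,\rho_{n-2},\rho_{n-1}\vec p,\vec w_1)$; if that too fails to be a zigzag one extracts $\vec p_{n-3}$ with $\rho_{n-1}\vec p=\rho_{n-2}\vec p_{n-3}$, and so on, alternating between the $q$-side and the $p$-side. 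After $n-1$ such steps one reaches $z_1$, which \emph{is} forced to be a tackle of larger efficiency, and that is the contradiction. Your sketch has no trace of this alternating descent, nor of the crucial use of ``$v$ and $w$ not interlaced'' to manufacture the new paths $q_{n-2},p_{n-3},\dots$ at each stage.

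Two smaller points. First, the lemma is purely combinatorial in the ray-category; invoking Theorem~\ref{structure} on non-deep contours or Theorem~\ref{zero} on critical paths is both unnecessary and logically out of place (those belong to steps~1--2, while the key lemma feeds step~3). Low-ness of each $z_i$ follows directly: the old $\rho_j$ are low because $z$ is a tackle, and the new morphisms $\vec q\rho_n$, $\rho_{n-1}\vec p$, $\vec w_m$, $\vec w_1$ are non-profound because each is a proper factor of $\vec v=\vec w\neq 0$. Second, your ``append at the $y$-end'' is backwards: the start $y$ is fixed throughout, and all modifications happen at the far end of the tackle (position $n$, then $n-1$, etc.), with $\vec w_m$ or $\vec w_1$ serving as the new irreducible last morphism.
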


\begin{proof}We consider the case  $n=5$. This will make clear how to treat the general case. Suppose that $\vec{p}$ is not an identity. Then the sequence 
$z_{5}=(\vec{r}_{1},\ldots,\vec{r}_{4},\vec{q}\vec{r}_{5},\vec{w}_{m})$ contains no profound morphism and it cannot be a zigzag because then its efficiency is greater than that of $z$.
 Since $v$ and $w$ are not interlaced there exists a non-trivial path $q_{3}$  such that $\vec{q}_{3}\vec{r}_{4}=\vec{q}\vec{r}_{5}$.

Next consider the sequence $z_{4}=(\vec{r}_{1},\ldots,\vec{r}_{3},\vec{r}_{4}\vec{p},\vec{w}_{1})$ not containing a profound  morphism. Again this cannot be a tackle because then its efficiency is 
too big. Since $v_{4}=q_{3}r_{4}p$ is interlaced with $v$ it is not interlaced with $w$ and so there is a non-trivial path $p_{2}$ with $\vec{r}_{3}\vec{p}_{2}=\vec{r}_{4}\vec{p}$.

Similarly we look at $z_{3}=(\vec{r}_{1},\ldots,\vec{q}_{3}\vec{r}_{3},\vec{w}_{m})$ and  $z_{2}=(\vec{r}_{1},\vec{r}_{2}\vec{p}_{2},\vec{w}_{1})$ and we find non-trivial paths $q_{1}$ 
and $\vec{p}_{0}$ with $\vec{q}_{1}\vec{r}_{2}=\vec{q}_{3}\vec{r}_{3}$ and 
$\vec{r}_{1}\vec{p}_{0}=\vec{r}_{2}\vec{p}_{2}$ so that finally the sequence $z_{1}=(\vec{q}_{1}\vec{r}_{1},\vec{w}_{m})$ is a tackle with larger efficiency than $z$. This contradiction ends the proof of the lemma.
\end{proof}\vspace{2cm}

\setlength{\unitlength}{0.8cm}
\begin{picture}(20,5)

\put(1,5){\vector(1,-1){2}}
\put(1,4.3){$r_{1}$}\put(3.5,4.3){$r_{2}$}\put(6,4.3){$r_{3}$}\put(7.5,4.3){$r_{4}$}\put(9.5,4.3){$r_{5}$}\put(12.5,4.3){$w'$}

\put(12,6.5){$w_{1}$}\put(10.5,6){$p$}\put(7,6){$p_{2}$}\put(12,1.5){$w_{m}$}\put(4,6){$p_{0}$}

\put(10.6,2){$q$}\put(9.5,2){$q_{3}$}\put(4,2){$q_{1}$}

\put(5,5){\vector(-1,-1){2}}

\put(5,5){\vector(1,-1){2}}
\put(9,5){\vector(0,-1){1}}

\put(9,4){\vector(2,-3){2}}
\put(9,5){\vector(-1,-1){2}}
\put(11,7){\vector(-1,-1){2}}
\put(11,7){\line(-5,-1){5}}
\put(6,6){\line(-5,-1){5}}
\put(11,7){\vector(-3,-1){6}}
\put(11,7){\vector(1,-1){1}}
\put(12,6){\vector(0,-1){4}}
\put(7,3){\vector(2,-1){4}}
\put(3,3){\vector(4,-1){8}}
\put(12,2){\vector(-1,-1){1}}
\put(6,0){figure 5}
\end{picture}\vspace{0.5cm} 

As already said the foregoing lemma is just a slight generalization of \cite[8.4]{BGRS}. The point is that the domain of $w_{1}$ behaves like a source in the remainig part of the proof of 
the vanishing theorem. We do not repeat the arguments, but only explain why they are still correct.
The section 8.5 remains valid for a weakly zigzag-finite ray-category
 as is easy to see  and we look 
now at the proof of lemma 8.6 in \cite {BGRS} thereby finishing the proof of theorem \ref{vanishing}. The only thing to be modified is the start of the induction.
If there is no tackle in our new sense any arrow $\alpha$ starting in $y$ induces a profound morphism $\vec{\alpha}$ and we still can take $a=0$ in the proof of lemma 8.6.

\subsection{The neighborhoods of non-deep contours}

To go on with the proofs we need to know how a penny-farthing is related to the whole ray-category. This is analyzed in \cite{Standard} using some partial results from \cite{BGRS}.
A short complete proof is given in \cite[section 4.2]{mild}.

\begin{theorem}\label{MIPF} Let $C$ be a penny-farthing in a mild ray-category $P$. Suppose that $P(x_{0},y)\neq 0 $ for some $y$ not in $C$. Then we have $n=2$ and we are in one of the following three situations:
\begin{enumerate}
 \item There is an arrow $\beta:x_{0} \rightarrow b$ 
and this is the only arrow outside $C$ ending or starting in $C$. We have $\beta\rho=0$, $\delta\beta=0$ for all arrows $\delta$ starting in $b$ and $y=b$.
\item There is an arrow $\gamma:x_{1} \rightarrow c$ and this is the only arrow outside $C$ ending or starting in $C$. We have $\gamma\alpha_{1}\rho=0$, $\delta\gamma=0$ for all arrows starting in  $c$ and $y=c$.
\item There is an arrow $\beta:x_{0} \rightarrow b$ as well as an arrow $\gamma:x_{1}  \rightarrow c$. These are the only two arrows outside $C$ ending or
 starting in $C$. We have $0=\beta\rho$, $0=\beta\alpha_{2}$, $0=\delta\beta$ for all arrows starting in $b$,  $0=\gamma\alpha_{1}$, $0=\epsilon\gamma$ for all arrows starting in $c$ and $y=b\neq c$.
\end{enumerate}
In all three cases there are no additional arrows ending in $b$ or $c$.

\end{theorem}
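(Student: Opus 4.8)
The plan is to argue entirely inside the mild ray-category $P$ using the cleaving-diagram technique, exactly as in the proofs of Theorems \ref{zero} and \ref{structure}. The penny-farthing $C$ consists of the loop $\rho$ at $x_0$ together with the cycle $\alpha_n\cdots\alpha_1$ through $x_0=x_n,x_1,\ldots,x_{n-1}$, with $\alpha_1\alpha_n=0$, $\rho^2=\alpha_n\cdots\alpha_1$, plus the zero-relations $\alpha_{e(i)}\cdots\alpha_1\rho\alpha_n\cdots\alpha_{i+1}=0$. The hypothesis is that some morphism leaves $C$, i.e.\ $P(x_0,y)\neq 0$ for some $y\notin C$; since $P(x_0,x_0)=\{1,\rho,\rho^2,\rho^3,0\}$ is generated by $\rho$, and $P(x_i,x_j)$ for $i,j$ in $C$ is controlled by the stated relations, any morphism into an outside point must be built from a \emph{new} arrow $\beta$ or $\gamma$ attached to some $x_i$. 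First I would show such a new arrow can only be attached at $x_0$ (call it $\beta:x_0\to b$) or at $x_1$ (call it $\gamma:x_1\to c$): if a new arrow started or ended at some other $x_i$, or if two new arrows were attached at points of $C$ other than the $x_0,x_1$ pair, one produces an infinite low zigzag or a forbidden cleaving subdiagram, contradicting weak zigzag-finiteness of the mild $P$.

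\textbf{Key steps.} (1) Reduce to $n=2$: if $n\geq 3$, the cycle $\alpha_n\cdots\alpha_1$ together with $\rho$ and the escaping morphism supports a cleaving diagram of infinite-type shape (a long zigzag wrapping around the farthing), which cannot be cleaving in a weakly zigzag-finite category; so $n=2$, the wheel has just the two vertices $x_0,x_1$ and arrows $\alpha_1:x_0\to x_1$, $\alpha_2:x_1\to x_0$, with $\alpha_1\alpha_2=0$, $\rho^2=\alpha_2\alpha_1$, and the single extra relation (from $e:\{1\}\to\{1,2\}$) either $\alpha_1\rho\alpha_2=0$ or $\rho\alpha_2\alpha_1=\rho\rho^2=\rho^3=0$, i.e.\ effectively $\alpha_1\rho\alpha_2=0$. (2) Locate the escaping arrow(s): analyze $P(x_0,-)$ and $P(x_1,-)$ and show that any nonzero morphism to an outside point factors through an arrow $\beta:x_0\to b$ or $\gamma:x_1\to c$; uniqueness (``the only arrow outside $C$'') again comes from a cleaving-diagram obstruction — a second escaping arrow at the same vertex, or an arrow pointing \emph{into} $C$ from outside, would create an infinite low zigzag. (3) Derive the zero-relations: having $\beta:x_0\to b$, I would examine $\beta\rho\in P(x_0,b)$, $\beta\alpha_2\in P(x_1,b)$, and $\delta\beta$ for $\delta$ starting at $b$. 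If $\beta\rho\neq 0$ then $\beta,\rho,\beta\rho,\alpha_1,\ldots$ assemble into a forbidden zigzag; similarly for the others. The equality $y=b$ (resp.\ $y=c$) then just records that the only outside target reachable from $x_0$ is $b$. Finally, in case (c) with both $\beta$ and $\gamma$ present, one shows $b\neq c$ and that no arrows terminate at $b$ or $c$ by the same zigzag-finiteness argument: an arrow ending in $b$ would, concatenated with $\beta$ and the wheel, give an infinite low zigzag.

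\textbf{Main obstacle.} The hard part is the bookkeeping in step (2)–(3): one must enumerate, for a mild $P$, all possible local configurations of arrows around the two-vertex wheel and, for each, exhibit the explicit cleaving subfunctor (of shape $\widetilde{A}_m$, $\widetilde{D}_m$, or an infinite zigzag) whose existence is ruled out by weak zigzag-finiteness — this is the same kind of ``careful local analysis'' that Theorem \ref{structure}(a) required, and it does not shorten much. The cleanest route is to cite the neighborhood analysis already carried out in \cite[section 4.2]{mild}, where a short complete proof is given; I would follow that argument, checking that the relation $\alpha_1\rho\alpha_2=0$ (one of the penny-farthing relations for $n=2$) is exactly what forces $\beta\rho=0$ and $\beta\alpha_2=0$, and that mildness of $P$ — hence weak zigzag-finiteness — is what forces $n=2$ and the uniqueness of the escaping arrows. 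The statement ``no additional arrows ending in $b$ or $c$'' is the last such zigzag obstruction and is checked the same way.
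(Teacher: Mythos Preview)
The paper does not give its own proof of this theorem: it simply refers to \cite{Standard}, to partial results in \cite{BGRS}, and to the short complete proof in \cite[section 4.2]{mild}. Since your proposal explicitly says the cleanest route is to follow \cite[section 4.2]{mild} via cleaving diagrams and a careful local analysis around the wheel, you are doing exactly what the paper does, and there is nothing further to compare at the level of strategy.

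That said, two details in your sketch need correcting. First, you repeatedly invoke ``weak zigzag-finiteness'' as the obstruction, but that is not how the cleaving technique works here: the point (section~3.2 of the paper) is that a cleaving functor $D\to P$ with $D$ representation-infinite forces $P$ to be representation-infinite, so exhibiting an extended-Dynkin cleaving diagram contradicts \emph{mildness} directly, not merely zigzag-finiteness; infinite low zigzags are a separate tool used only when one actually produces one. Second, your reading of the $n=2$ relation is off: for $e(1)=2$ the relation is $\alpha_{2}\alpha_{1}\rho\alpha_{2}=0$, not $\rho\alpha_{2}\alpha_{1}=0$, and in particular $\alpha_{1}\rho\alpha_{2}$ need not vanish --- section~3.7 of the paper isolates precisely the penny-farthings with $\vec{\alpha}_{1}\vec{\rho}\vec{\alpha}_{n}\neq 0$ as the ones causing trouble in characteristic~$2$, so you cannot use ``$\alpha_{1}\rho\alpha_{2}=0$'' to force $\beta\rho=0$ or $\beta\alpha_{2}=0$. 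These relations must instead come from genuine cleaving obstructions against mildness, which is what the cited proof in \cite{mild} supplies.
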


Analogous results hold for dumb-bells and diamonds and this leads in \cite{mild} to the following  result:

\begin{theorem}\label{mininf} A minimal representation-infinite ray-category has no non-deep contour.
 
\end{theorem}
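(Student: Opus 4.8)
The plan is to argue by contradiction: suppose $P$ is a minimal representation-infinite ray-category that contains a non-deep essential contour $C = (v,w)$. By Theorem~\ref{structure}a), $C$ is equal or reverse to a dumb-bell, a penny-farthing or a diamond, so it suffices to rule out each of these three possibilities in turn. In each case the subcategory $P(C)$ is completely described (by quiver and relations) in figure~3, and Theorem~\ref{MIPF} together with its stated analogues for dumb-bells and diamonds tells us exactly how $P(C)$ sits inside the whole ray-category: there are only very few arrows connecting $C$ to the rest of $P$, and they are subject to stringent zero-relations.

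First I would treat the penny-farthing. Since $P$ is representation-infinite it cannot be equal to the penny-farthing $P(C)$ alone (penny-farthings are finite and one checks they are representation-finite, e.g.\ because they embed in a known finite list, or directly), so there must be a point $y \notin C$ with $P(x_0,y) \neq 0$ or $P(y,x_0)\neq 0$; applying Theorem~\ref{MIPF} (and its dual) forces $n = 2$ and puts us in one of the three listed local configurations. In each configuration the algebra $kP$ has a \emph{proper} quotient --- obtained by killing the arrow(s) $\beta$ and/or $\gamma$ leaving $C$, or rather by collapsing $C$ appropriately --- which must be locally representation-finite because $P$ is mild; but one then exhibits a cleaving diagram (an infinite zigzag, in the spirit of figure~2) built from the morphisms $\rho, \alpha_1, \alpha_2, \beta, \gamma$ and the relations in Theorem~\ref{MIPF}, showing that $P$ itself is \emph{not} locally representation-finite in a way that does not genuinely use the ambient category --- contradicting minimality, since already a proper convex subcategory (or the subcategory supported on $C$ together with $b$, $c$) would be representation-infinite. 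Concretely, the penny-farthing with its one or two escaping arrows and the prescribed relations supports an infinite zigzag, hence is representation-infinite, so $P$ restricted to those finitely many points is already representation-infinite, violating mildness of $P$ unless $P$ equals that subcategory --- but then $P$ is finite, contradicting representation-infiniteness. The same template handles dumb-bells and diamonds using the analogous neighborhood theorems referenced after Theorem~\ref{MIPF}.

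The heart of the matter, and the step I expect to be the main obstacle, is the explicit construction of the cleaving diagram in each of the (finitely many, but somewhat numerous) local configurations: one must write down a specific infinite zigzag or periodic crown in $P(C)$ enlarged by its escaping arrows, and verify the two cleaving conditions (a) and (b) of section~3.2 --- that no $\rho_i$ factors through a neighbor and that zero goes to zero --- using precisely the relations supplied by Theorem~\ref{MIPF}. This is routine in spirit (the cleaving conditions are, as the text notes, ``very easy to verify'') but requires care because the penny-farthing relations $\alpha_{e(i)}\cdots\alpha_1\rho\alpha_n\cdots\alpha_{i+1}=0$ and $\alpha_1\alpha_n = 0$ interact delicately with the escaping arrows' relations $\beta\rho=0$, $\delta\beta=0$, etc. Once cleaving diagrams are in hand, the conclusion is immediate: a cleaving functor from an infinite zigzag forces representation-infiniteness of its target (section~3.2), so the finite subcategory of $P$ on the points of $C$ together with $b$ and $c$ is already representation-infinite, and minimality of $P$ then forces $P$ to coincide with this finite subcategory, which is absurd.

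I would organize the write-up as three short lemmas --- ``no penny-farthing'', ``no dumb-bell'', ``no diamond'' --- each following the same four-line pattern: invoke the neighborhood theorem to pin down the local shape; exhibit the zigzag; check cleaving; derive the contradiction with mildness plus minimality. The reference \cite{mild} presumably carries out exactly this bookkeeping, so in a survey one might reasonably compress the diamond and dumb-bell cases and present the penny-farthing case in full as the representative example, since it is the one whose neighborhoods are stated explicitly here in Theorem~\ref{MIPF}.
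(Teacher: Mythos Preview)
Your overall architecture is sound: assume a non-deep contour, invoke Theorem~\ref{structure} to reduce to the three shapes, then use Theorem~\ref{MIPF} and its analogues to control the neighborhood. But the heart of your argument runs in the wrong direction and contains a genuine error.

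You claim that the penny-farthing together with its one or two escaping arrows $\beta,\gamma$ \emph{supports an infinite zigzag} and is therefore representation-infinite, so that a proper piece of $P$ is already representation-infinite, contradicting minimality. This is false on two counts. First, those small configurations (penny-farthing with $n=2$ plus the points $b$ and/or $c$ as in Theorem~\ref{MIPF}) are \emph{representation-finite}; indeed the proof of Theorem~\ref{faithful} in section~3.8 records that they have no faithful indecomposable at all, which together with mildness forces finite type. You will not find an infinite zigzag there. Second, even if a proper full convex subcategory were representation-infinite, that does not by itself contradict mildness: mildness is a condition on proper \emph{quotients}, not on subcategories. Your sentence ``but then $P$ is finite, contradicting representation-infiniteness'' conflates having finitely many objects with being representation-finite; these are unrelated.

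The correct argument runs the other way. A minimal representation-infinite ray-category is finite (Theorem~\ref{promild}) and has a faithful indecomposable: since there are only finitely many minimal nonzero ideals and infinitely many indecomposables, not all of them can be annihilated by a nonzero ideal. Now Theorem~\ref{MIPF} (and its analogues) says that the only exits from the contour are through the points $b,c$, and the relations there (e.g.\ $\delta\beta=0$ for all $\delta$) mean $b$ and $c$ are separating in the sense of \cite{Martinez}: splitting them into emitter and receiver disconnects the quiver into the component carrying the contour and at most two others. A faithful indecomposable must live in a single component after separation, so the other components are empty and $P$ \emph{equals} one of the explicit small configurations. One then checks directly (as is done in \cite{Standard} and quoted in section~3.8) that these small algebras have no faithful indecomposable --- contradiction. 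The dumb-bell and diamond cases follow the same pattern using the analogous neighborhood theorems.

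So the bookkeeping you anticipated is real, but the computation to be carried out is the opposite of what you propose: you must verify that each local configuration is representation-finite (equivalently, lacks a faithful indecomposable), not that it supports a crown.
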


\subsection{The case of characteristic $2$}
We have already seen at Riedtmanns example that in characteristic $2$ the linearizations of the ray-category and the stem-category are not always isomorphic and we know that this trouble is caused by
 certain penny-farthings. So let $\mathcal{P}$ be the set of all penny-farthings in $A$ such that $\vec{\alpha_{1}}\vec{\rho}\vec{\alpha_{n}} \neq 0$. For any subset $\mathcal{N}$ of 
$\mathcal{P}$ we define a base category $\vec{A}_{\mathcal{N}}$ having the same objects and morphisms as $\vec{A}$. The composition of two morphisms $\vec{\nu}$ and $\vec{\mu}$ also coincides
with the composition in $\vec{A}$ except for the case where the domains and codomains all belong to the same penny-farthing contained in $\mathcal{N}$. In this case the composition is the ray 
$\vec{\lambda}$ such that $A_{\lambda}$ is the product of $A_{\nu}$ and $A_{\mu}$ in the stem-category. Thus $\vec{A}_{\mathcal{N}}$ is a mixture of ray- and stem-categories.

\begin{theorem}\label{char2} Suppose the characteristic of $k$ is $2$.
 Let $A$ be a distributive category such that $\vec{A}$ is mild. Then we have with the above notations:
\begin{enumerate}
 \item The composition defined above in $k\vec{A}_{\mathcal{N}}$ is associative. The category $k\vec{A}_{\mathcal{N}}$ is distributive with ray-category $\vec{A}$, but it is not standard provided 
${\mathcal N}$ is not empty. 
\item Each distributive category $B$ with $\vec{B} \simeq \vec{A}$ is isomorphic to $k\vec{A}_{\mathcal{N}}$ for some subset $\mathcal{N}$ of $\mathcal{P}$. Two such categories are
 isomorphic iff the subsets are conjugate under the automorphism group of $\vec{A}$.
\end{enumerate}

\end{theorem}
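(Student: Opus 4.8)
\textbf{Proof proposal for Theorem~\ref{char2}.}

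The plan is to run through the three‑step reduction of Theorem~\ref{multbasis}, but now keeping careful track of the one obstruction in characteristic~$2$ rather than killing it. Start from an arbitrary presentation $\phi:kQ\to B$. Step~1 (changing one arrow in each critical path) and the parts of Step~2 that concern dumb‑bells, diamonds and penny‑farthings with $\vec\alpha_1\vec\rho\vec\alpha_n=0$ go through verbatim, since those only require changes at bitransit arrows and the relevant scalars exist in every characteristic. Step~3 (Theorem~\ref{vanishing}, Roiter's vanishing theorem) also goes through unchanged, because it only involves deep contours and never touches the arrows $\alpha_1$ of a penny‑farthing in $\mathcal P$. So after these normalizations the only relations that may fail to be the ``ray‑relations'' are the commutativity relations $v-w$ attached to penny‑farthings $C\in\mathcal P$, i.e. exactly those where $\vec\alpha_1\vec\rho\vec\alpha_n\neq0$, equivalently $\alpha_1\alpha_n$ is a critical path rather than a zero‑path. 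Let $\mathcal N\subseteq\mathcal P$ be the set of these ``bad'' penny‑farthings. The key structural input is Theorem~\ref{MIPF}: each $C\in\mathcal P$ has $n=2$ and an explicitly described neighbourhood, and in particular distinct members of $\mathcal P$ are disjoint (they share no arrow, by the disjointness theorem for non‑deep contours, Theorem~\ref{structure}b), and the arrows $\alpha_1,\alpha_2,\rho$ of such a $C$ appear in no other essential contour and in no critical path except $\alpha_1\alpha_2$ itself. Hence the local modification needed at each $C\in\mathcal N$ is independent of everything else.

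For part~(a) one must show that, for any $\mathcal N\subseteq\mathcal P$, the twisted composition on $k\vec A_{\mathcal N}$ is associative, that the result is distributive with ray‑category $\vec A$, and that it is not standard when $\mathcal N\neq\varnothing$. Associativity: the composition agrees with that of $k\vec A$ except on triples of composable morphisms all supported inside a single $C\in\mathcal N$; there it is the stem‑category product $A_\mu A_\nu$, which is associative by Proposition~\ref{JansKupisch}; one has to check the ``mixed'' associativity constraints where only some of the three morphisms live inside $C$, and here Theorem~\ref{MIPF} is what guarantees there is essentially nothing to check — any morphism entering or leaving $C$ factors through $\beta$ or $\gamma$ and these compose to zero against $\rho$, so both bracketings vanish. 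That the ray‑category of $k\vec A_{\mathcal N}$ is again $\vec A$ is immediate from the construction: we have changed only which representative of a product‑ray is declared to be ``the'' composite of two given rays, not the set of rays nor which products vanish. Distributivity then follows from Proposition~\ref{JansKupisch}(2) since the endomorphism algebras and the cyclicity of the Hom‑bimodules are read off from $\vec A$. Finally, non‑standardness for $\mathcal N\neq\varnothing$: if $C\in\mathcal N$ and $k\vec A_{\mathcal N}$ were standard, then inside the penny‑farthing the relation $\phi(\alpha_n)\cdots\phi(\alpha_1)=\phi(\rho)^2$ would have to be realizable simultaneously with the ray composition $\vec\alpha_1\vec\rho\vec\alpha_n\neq0$; the computation in Riedtmann's example (where $\overrightarrow{kB_2}\simeq B_1$) is the prototype, and one checks that in $\mathrm{char}\,2$ the two requirements force a sign that cannot be absorbed by rescaling arrows — this is precisely the failure of Step~2 for bad penny‑farthings quoted in \S3.5.

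For part~(b): given any distributive $B$ with $\vec B\simeq\vec A$, apply the reduction above to a presentation of $B$. After Steps~1, 3 and the good part of Step~2, the surviving non‑ray relations are attached to a subset of $\mathcal P$, and for each such $C$ one uses Theorem~\ref{MIPF} to see that the relation must be exactly the stem‑category relation (the only other associative possibility consistent with the fixed ray‑category and the one‑dimensional sockets involved), so $B\simeq k\vec A_{\mathcal N}$ for the resulting $\mathcal N$. For the isomorphism classification, note first that an isomorphism $k\vec A_{\mathcal N}\to k\vec A_{\mathcal N'}$ induces an automorphism of the common ray‑category $\vec A$ (since the ray‑category is functorial and intrinsic), and this automorphism must carry $\mathcal N$ onto $\mathcal N'$, because membership in $\mathcal N$ versus $\mathcal P\setminus\mathcal N$ is detected by whether the local commutativity relation is the stem‑relation or the ray‑relation — an invariant of the $k$‑category. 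Conversely, if $g\in\mathrm{Aut}(\vec A)$ maps $\mathcal N$ to $\mathcal N'$, then $g$ lifts to an isomorphism $k\vec A_{\mathcal N}\to k\vec A_{\mathcal N'}$ because, away from the disjoint penny‑farthings, both categories equal $k\vec A$, and on each $C$ both sides use the same (stem‑ or ray‑) rule. The main obstacle is the non‑standardness half of part~(a) together with the ``only other possibility'' claim in part~(b): both rest on a genuine, if short, computation in the local model of a penny‑farthing in characteristic $2$, showing that the $2$‑torsion obstruction is realized exactly and cannot be rescaled away — everything else is bookkeeping built on Theorems~\ref{structure} and~\ref{MIPF}.
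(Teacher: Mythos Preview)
Your approach matches the paper's: part~(a) via Theorem~\ref{MIPF}, part~(b) by running the three-step argument of Theorem~\ref{multbasis} in parallel to the characteristic~$\neq 2$ case and invoking a vanishing-of-cohomology statement. The paper itself gives only this sketch and defers the details to \cite[section~9]{BGRS}, so your write-up is in fact more explicit than what appears here.

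One point deserves more care. Your assertion that ``Step~3 goes through unchanged, because it only involves deep contours'' is not literally correct: the contour function $c$ of \S3.5 must assign a \emph{scalar} to every contour, and for an unresolved bad penny-farthing one only has $\phi(v)=\xi_C\phi(w)$ with $\xi_C=a^2\mathrm{id}+b\,\phi(\rho)$ a non-scalar unit (indeed $\phi(\rho)^3\neq 0$ here). What makes the argument work is the isolation furnished by Theorem~\ref{MIPF} and the disjointness theorem: one first normalizes each $C\in\mathcal P$ locally to one of the two canonical forms (ray or stem, thereby determining $\mathcal N$), and only then is the residual obstruction a genuine contour function to which the vanishing theorem applies --- this is presumably why the paper says ``the vanishing of a \emph{certain} cohomology group'' rather than simply re-invoking Theorem~\ref{vanishing}. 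This is a matter of ordering and bookkeeping rather than a strategic gap, but as written your Step~3 does not quite parse.
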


The proof of the first part uses some of the information contained in theorem  \ref{MIPF} and the proof of the second part runs parallel to the proof for characteristic different from 2. This is explained
 very   well in \cite[section 9]{BGRS}. One uses
 again the vanishing of a certain cohomology group.

\subsection{The proof of theorem \ref{faithful}}
We give some details because the proof of these results in \cite[section 9.7]{BGRS} is too complicated and the one in \cite[section 13.17]{Buch} contains a minor error.
 
Let $A$ be a distributive category such that $\vec{A}$ is mild. First we show that $A$ is standard if it has a faithful indecomposable. As the proof of theorem \ref{multbasis} shows this is
 clear if there is no penny-farthing $P$ as in figure 3 with $\vec{\alpha_{1}}\vec{\rho}\vec{\alpha_{n}} \neq 0$ in $\vec{A}$. So let $P$ be such a penny-farthing. If
 $A(y,x_{0})=0=A(x_{0},y)$
holds for all $y \notin P$ then $A(x_{0}, \,\,)$ is projective-injective, whence the only candidate for a faithful indecomposable, and annihilated by  $\alpha_{1}\rho \alpha_{n}$.
Thus, up to duality, we can assume $A(x_{0},y) \neq 0$ for some $y \notin P$. Then we are in one of the three situations described in theorem \ref{MIPF}. By a well-known result \cite{Martinez}
the points $b$ and $c$ can always
 be separated into a receiver and an emitter and the quiver of $A$ and $\vec{A}$ is separated into a connected component containing $x_{0}$ and at most two other components.
Since there is a faithful indecomposable these components are actually empty and we are in one of the three situations described in theorem \ref{MIPF}. As shown in \cite{Standard}
 by a direct calculation the Auslander-Reiten quivers of $A$ and $A^s$ coincide in all three cases and there is no faithful indecomposable.

Next, let $A$ be finite and representation-finite. Using the correspondence of proposition \ref{ray} between  ideals of $A$ and $\vec{A}$ we know by induction that $\vec{A}$ is mild. If it
 is minimal 
representaion-infinite it has a faithful indecomposable and so we have   $A\simeq k\vec{A}$ by the above, a contradiction. Thus $\vec{A}$ is also representation-finite.

Reversely, let $\vec{A}$ be finite and representation-finite. Again by induction $A$ is mild. If it is minimal representation-infinite it has a faithful indecomposable and we end 
up again with the contradiction that $A\simeq k\vec{A}$. 

We have shown for finite distributive categories that $A$ is representation-finite iff $\vec{A}$ is so. This implies easily that $A$ is locally representation-finite iff $\vec{A}$ is so.
The analogous statement for mildness follows from the correspondence between the ideals of $A$ and $\vec{A}$. Finally, the Auslander-Reiten quivers are isomorphic because for each 
penny-farthing one has either a projective-injective corresponding to the point $x_{0}$ or one is in one of the three cases from theorem \ref{MIPF}. In the second case the Auslander-Reiten quivers 
of $A$ and its standard-form are glued together by the same rules from the isomorphic Auslander-Reiten quivers obtained by separating all occurring points $b$ or $c$ into emitters and 
receivers. In the first case one knows how the Auslander-Reiten sequences with a projective-injective in the middle look like.

\section{The topology of a  ray-category}

\subsection{The simplicial complex and the universal covering of a base category}

The following material is from \cite[sections 1.10,10]{BGRS}. For each base category $B$ one defines a simplicial complex $S_{\bullet}B$ by taking the set of objects as $S_{0}B$ and the set of n-tuples $(\mu_{n},\ldots, \mu_{2},\mu_{1})$ of 
composable morphisms with $\mu_{n} \ldots \mu_{2}\mu_{1}\neq 0$ as $S_{n}B$. The face operators are defined in the usual way by dropping a morphism at the ends or by composing two in between 
and the degeneracy operators are defined by inserting identities ( see \cite[section 1.10]{BGRS} ). Let $C_{n}B$ be the free abelian group with basis $S_{n}B$ and define the differential
$d_{n}:C_{n}B \rightarrow C_{n-1}B$ by the alternating sum of the appropriate face operators, i.e. by $$d_{n}(\mu_{n},\ldots ,\mu_{2},\mu_{1})=(\mu_{n},\ldots ,\mu_{2})-
(\mu_{n},\ldots ,\mu_{2}\mu_{1})+ \ldots +
(-1)^{n}(\mu_{n-1},\ldots ,\mu_{2},\mu_{1}).$$ Then one obtains a chain complex $C_{\bullet}B$  whose homology groups are denoted by $H_{n}B$ whereas 
$H^{n}(B,Z)$  is the $n$-th cohomology group of $Hom( C_{\bullet}B,Z)$ for any abelian group $Z$.  

A functor $F:B' \rightarrow B$ between base categories is a covering if it satisfies the following conditions a), b) and the dual of b). Condition a) says that $F\mu=0$ is
 equivalent to $\mu=0$. Condition b) means that any point $x$ in $B$ can be lifted to a pont $x'$ and any $\mu:x \rightarrow y$ can be lifted to a unique $\mu':x' \rightarrow y'$.
 It follows that $\mu$ is irreducible iff $F\mu$ is, whence a covering induces a covering between the quivers. The covering $\pi:\tilde{B} \rightarrow B$ is 'the' universal covering if 
for any covering $F:B' \rightarrow B$ any $x$ in $B$ with liftings $x'$ in $B'$ and $\tilde{x}$ in $\tilde{B}$ there is exactly one functor $G:\tilde{B} \rightarrow B'$ with $\pi= F G$
 and $G\tilde{x}=x'$. Then $G$ is again a covering and  even an automorphism for $F=\pi$. The group of all these automorphisms is called the fundamental group $\Pi_{B}$ of $B$ and $B$
 is simply connected if this group is trivial.

It is easy to see that
one obtains the universal covering by the following construction. A walk $w=\alpha_{n}\ldots \alpha_{1}$ of length $n$ from $x$ to $y$  is a formal composition of arrows $\beta$ in $Q_{B}$ and formal
inverses $\beta^{-1}$ such that the domains and codomains fit together well and $x$ is the domain of $\alpha_{1}$, $y$ the codomain of $\alpha_{n}$. Two walks $v$, $w$ can be composed to the 
walk
$wv$ if the end of $v$ is the start of $w$. The homotopy is the smallest equivalence relation on the set of all walks such that:1) $\alpha\alpha^{-1} \sim id_{y}$ and
$ \alpha^{-1}\alpha \sim id_{x}$  for all $\alpha:x \rightarrow y$, 2) $v \simeq w$ and $v^{-1} \simeq w^{-1}$ for all paths $v,w$ mapped to the same non-zero morphism under the canonical
 presentation $PQ_{B} \rightarrow B$ and 3) $v \sim w$ implies $uv \sim uw$ resp. $vu \sim wu$ whenever these compositions are defined. Now the points of the universal covering are the homotopy 
classes of walks with a fixed start $x$ and the fundamental group consists of the homotopy classes with start and end in $x$. The multiplication is induced by the composition of walks. Since 
$Q_{B}$ is connected this construction is essentially independent of the chosen base point $x$.

The following 'Hurewicz-isomorphism' from \cite[10.4]{BGRS} is useful:
\begin{proposition}
 Let $P$ be a connected ray-category with fundamental group $\Pi$ and universal covering $\tilde{P}$. Then  $\Pi/[\Pi,\Pi]$ is isomorphic to $H_{1}P$. In particular one has $H_{1}\tilde{P}=0$.
\end{proposition}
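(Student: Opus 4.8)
The plan is to prove the "Hurewicz isomorphism" $\Pi/[\Pi,\Pi] \cong H_1 P$ by exhibiting a natural surjective group homomorphism $\Phi : \Pi \to H_1 P$, showing its kernel is exactly the commutator subgroup, and finally deducing $H_1\tilde P = 0$ from the fact that $\tilde P$ is simply connected. Throughout I would work with the explicit combinatorial model of $\Pi$ given in the excerpt: elements of $\Pi$ are homotopy classes of closed walks $w = \alpha_n \cdots \alpha_1$ based at a fixed object $x$, where each $\alpha_i$ is an arrow of $Q_P$ or a formal inverse of one.

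First I would define $\Phi$ on walks. Pick for every object $y$ a fixed walk $\gamma_y$ from $x$ to $y$ (with $\gamma_x = \mathrm{id}_x$); this is possible since $Q_P$ is connected. For an arrow $\beta : y \to z$, the one-simplex $(\beta) \in S_1 P$ is a $1$-chain (note every single arrow is a nonzero morphism, so $(\beta) \in C_1 P$), and I set $\Phi_0(\beta)$ to be the class in $H_1 P$ of the $1$-cycle obtained by "closing up" $(\beta)$ using $\gamma_y$ and $\gamma_z$ — concretely one triangulates the closed walk $\gamma_z^{-1}\beta\gamma_y$ into a sum of $1$-simplices, using that for a product of two nonzero morphisms whose composite is nonzero the relevant $2$-simplex in $S_2 P$ provides the boundary relation, and for inverse arrows $\beta^{-1}$ one uses the degenerate simplices $\mathrm{id}$ to get $\Phi_0(\beta^{-1}) = -\Phi_0(\beta)$. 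Extending multiplicatively along walks gives $\Phi$ on all closed walks based at $x$, landing in $H_1 P$; since $H_1 P$ is abelian, $\Phi$ factors through $\Pi/[\Pi,\Pi]$ once we check it is well defined on homotopy classes and is a homomorphism.

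The three homotopy relations must each be checked to be killed by $\Phi$. Relations (1), $\alpha\alpha^{-1}\sim \mathrm{id}$ and $\alpha^{-1}\alpha \sim \mathrm{id}$, hold because $\Phi_0(\beta^{-1}) = -\Phi_0(\beta)$ by construction. Relation (3), compatibility with composition of walks, is automatic since $\Phi$ was defined multiplicatively. The substantive point is relation (2): if $v,w$ are paths in $PQ_P$ mapping to the same nonzero morphism of $P$, then $\Phi(v) = \Phi(w)$. This reduces, via the definition of the presentation $PQ_P \to P$ as the quotient by the smallest stable equivalence relation identifying minimal zero-paths with $0$ and the two sides of each essential contour, to two facts: a minimal zero-path contributes the trivial class (its $1$-simplices assemble, using the relevant $2$-simplices in $S_\bullet P$, to a boundary — here one uses that the prefixes of a minimal zero-path are nonzero so the needed simplices actually lie in $S_2 P$), and the two paths of an essential contour $(v,w)$ with $\vec v = \vec w \neq 0$ give the same class because the "prism" between them decomposes into $2$-simplices in $S_\bullet P$. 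This last verification, matching the simplicial structure $S_\bullet P$ against the contour relations defining $P$, is where I expect the real work to be, and I would lean on the analysis of minimal zero-paths and essential contours already developed in the paper.

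Surjectivity of $\Phi$ is easy: any $1$-cycle is a sum of $1$-simplices $(\beta)$, and modulo boundaries each $(\beta)$ is homologous to its closed-up version $\gamma_z^{-1}\beta\gamma_y$, so $H_1 P$ is generated by the images of closed walks. For injectivity of the induced map $\bar\Phi : \Pi/[\Pi,\Pi] \to H_1 P$ I would construct an inverse: send the class of a $1$-cycle $\sum n_i (\beta_i)$ to the product (in $\Pi/[\Pi,\Pi]$, hence order-independent) of the classes of $\gamma_{z_i}^{-1}\beta_i\gamma_{y_i}$ raised to $n_i$; one checks this is well defined on homology classes (boundaries of $2$-simplices $(\mu_2,\mu_1)$ map to the trivial class since in $\Pi/[\Pi,\Pi]$ the walk $\gamma^{-1}\mu_2\mu_1\gamma$ is homotopic to $(\gamma^{-1}\mu_2\gamma')(\gamma'^{-1}\mu_1\gamma)$) and is inverse to $\bar\Phi$. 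Finally, $H_1\tilde P = 0$: by Proposition (the simply-connectedness setup) the universal covering $\tilde P$ has trivial fundamental group $\Pi_{\tilde P}$, since the universal covering of $\tilde P$ is $\tilde P$ itself; applying the isomorphism $\Pi_{\tilde P}/[\Pi_{\tilde P},\Pi_{\tilde P}] \cong H_1\tilde P$ just proved gives $H_1\tilde P \cong 1$, i.e. $H_1\tilde P = 0$.
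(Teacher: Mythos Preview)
The paper does not actually prove this proposition; it is quoted from \cite[10.4]{BGRS} without argument. Your overall architecture---define $\Phi:\Pi\to H_1P$ on closed walks, check well-definedness on homotopy classes, factor through $\Pi/[\Pi,\Pi]$, build an explicit inverse---is the standard Hurewicz construction and is correct in outline.

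There is, however, a genuine confusion in your handling of homotopy relation (2). You propose to verify $\Phi(v)=\Phi(w)$ for paths with $\vec v=\vec w\neq 0$ by reducing to the generators of the presentation $PQ_P\to P$, namely minimal zero-paths and essential contours, and you say you would ``lean on the analysis of minimal zero-paths and essential contours already developed in the paper.'' Two objections. First, minimal zero-paths satisfy $\vec v=0$ and are simply irrelevant to relation (2), which only concerns paths with common \emph{nonzero} image; they never enter the homotopy relation at all. Second, and more seriously, the structure and disjointness theorems for critical paths and non-deep contours in section~3 all assume $\vec A$ is mild, whereas the present proposition carries no such hypothesis. Invoking them here would be unjustified.

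Fortunately the direct verification is shorter and needs nothing beyond the definitions. If $v=\alpha_n\cdots\alpha_1$ with $\vec v=\mu\neq 0$, then every partial product $\vec\alpha_i\cdots\vec\alpha_1$ is nonzero, so the $2$-simplices $(\vec\alpha_{i+1},\,\vec\alpha_i\cdots\vec\alpha_1)$ all lie in $S_2P$, and their boundaries telescope to show $\sum_i(\vec\alpha_i)-(\mu)\in\operatorname{im}d_2$. The same computation for $w$ gives $\sum_i(\vec\alpha_i)-\sum_j(\vec\beta_j)\in\operatorname{im}d_2$, which is exactly what you need. With this replacement the rest of your argument (surjectivity via spanning walks, the explicit inverse on $\Pi/[\Pi,\Pi]$, and $H_1\tilde P=0$ from $\Pi_{\tilde P}=1$) goes through without difficulty.
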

The elementary definitions above are familiar from algebraic topology, but for an arbitrary base category the construction of the universal covering following these lines is an impossible 
task because it involves the word problem for groups as shown in \cite{delapena1}.  However, the reader can easily verify that in Riedtmanns example $B_{2}$ is simply connected 
whereas $B_{1}$ admits the universal covering shown in figure 6. Here the horizontal arrows are mapped onto the loop and the relations in the universal covering are the lifted ones.
\vspace{0.5cm}

\setlength{\unitlength}{0.8cm}
\begin{picture}(20,5)
\multiput(3,3)(2,0){6}{\circle*{0.2}}
\multiput(1,1)(2,0){7}{\circle*{0.2}}

\multiput(1,1)(2,0){6}{\vector(1,0){1.8}}
\multiput(1,1)(2,0){6}{\vector(1,1){1.8}}
\multiput(3,3)(2,0){5}{\vector(1,-1){1.8}}
\multiput(0,2)(0.3,0){3}{\circle*{0.1}}
\multiput(14,2)(0.3,0){3}{\circle*{0.1}}
\put(6,0){figure 6}
\end{picture}
\vspace{0.5cm}

\subsection{The main results for ray-categories} 

A ray-category $P$ is called interval-finite if the quiver $Q_{P}$ of $P$ is directed and if there are only finitely many paths between any two points of $P$. For a point $x$ 
in an interval-finite ray-category we denote by $P_{x}$ the set of all $y\neq x$ such that $P(x,y)\neq 0$. We order this set by $y \leq z$ iff the only non-zero morphism from $x$ to $z$ 
factors through $y$.
 Furthermore we denote by $Px$ the full subquiver of $Q_{P}$ consisting of the points $y$ where no path ending in $x$ starts. We say that $x$ is separating if each 
connected component of  $Px$ contains at most one connected component of the Hasse-diagram of $P_{x}$.

\begin{theorem}\label{separation}(\cite{Bretscher,Criterion} ) Let $P$ be a weakly zigzag-finite interval-finite ray-category. 
\begin{enumerate}
 \item $H_{1}P=0$ holds iff $H_{1}C=0$ holds for all finite convex subcategories $C$ of $P$. In that case all objects are separating.
\item If $P$ is finite and all objects are separating then $H_{1}P=0$ holds.
\end{enumerate}

\end{theorem}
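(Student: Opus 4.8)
\textbf{Proof proposal for Theorem~\ref{separation}.}

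The plan is to reduce everything to the combinatorics of walks in the quiver $Q_P$ and to exploit the Hurewicz isomorphism $\Pi/[\Pi,\Pi]\cong H_1 P$. For part a), the implication from $H_1 P=0$ to the vanishing on finite convex subcategories is the easy direction: any $1$-cycle in $C_\bullet C$ involves only finitely many objects and composable tuples, all of which live inside some finite convex $C$, and a bounding $2$-chain in $P$ again involves only finitely many simplices; enlarging $C$ to contain them (convex hull is still finite by interval-finiteness) shows the cycle already bounds in $C$. Wait --- one has to be a little careful that the convex hull of a finite set is finite, but that is exactly what interval-finiteness plus directedness buys us. The reverse implication is the substantive one: given a homotopy-trivial-on-all-finite-convex-pieces hypothesis, one takes an arbitrary closed walk $w$ at a base point $x$, observes it passes through finitely many vertices, takes the finite convex subcategory $C$ they generate, and concludes $w$ is null-homotopic there, hence in $P$. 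So $\Pi_P$ is trivial and $H_1 P=0$.

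For the statement that then all objects are separating, I would argue contrapositively: if some object $x$ is not separating, then a single connected component of $Px$ meets two distinct components of the Hasse diagram of $P_x$, and one can build from this configuration an explicit non-contractible closed walk --- go out from $x$ to one component of $P_x$, travel through the connecting component of $Px$ (which avoids all paths ending in $x$), come back down into the other component of $P_x$, and return to $x$; the two "descending'' portions cannot be homotoped into one another precisely because the components of $P_x$ are separate, so this walk represents a nontrivial element of $\Pi_P$, contradicting $H_1P=0$ via Hurewicz (after abelianization one still sees it is nonzero, which needs a small argument --- perhaps better to phrase the obstruction directly in $H_1$ by writing down an explicit $1$-cycle and a homomorphism $C_1 P\to\Z$ detecting it). This detection step is where I expect the main obstacle: producing an honest $1$-cocycle that pairs nontrivially with the loop, rather than merely asserting $\pi_1$ is nontrivial.

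For part b), with $P$ finite and every object separating, I would proceed by induction on the number of objects, peeling off a vertex. Choose $x$ to be, say, a source of $Q_P$ (exists since the quiver is directed and finite). Then $Px$ is essentially all of $P$ minus $x$ and its local structure near $x$, and the separation condition at $x$ says the "link'' of $x$ is connected in the relevant sense; a Mayer--Vietoris / van Kampen argument for the simplicial complex $S_\bullet P$, decomposing it as the star of $x$ glued to $P\setminus\{x\}$ along the link, gives $H_1 P\cong H_1(P\setminus\{x\})$ provided the link is connected and the star is contractible (the star of a vertex always is). One must check $P\setminus\{x\}$, or rather the convex subcategory on the remaining objects, is again finite with all objects separating --- removing a source should preserve separation at the other vertices, though this requires verifying that $P_y$ and $Py$ for $y\neq x$ are not damaged, which is the fiddly bookkeeping step. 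Iterating down to a single object, where $H_1$ visibly vanishes, finishes the induction. The delicate points throughout are: (i) that "convex hull of a finite set is finite'' under our hypotheses, (ii) making the van Kampen gluing precise at the level of the combinatorial complex $S_\bullet P$ rather than a topological realization, and (iii) checking separation is inherited when deleting a source --- I'd expect (iii) to be the real work.
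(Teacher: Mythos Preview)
The paper does not actually prove this theorem; it only states it with references to \cite{Bretscher,Criterion}. So there is no in-paper proof to compare against, but your proposal contains a genuine gap that should be fixed.

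Your ``easy direction'' in part (a) is wrong as written. You take a $1$-cycle $z$ in a \emph{fixed} finite convex subcategory $C$, observe that it bounds in $P$ by some $2$-chain $c$, and then enlarge $C$ to a finite convex $C'$ containing the support of $c$. This shows $z$ bounds in $C'$, not in $C$ --- but the claim is $H_{1}C=0$ for \emph{every} finite convex $C$, so you must produce a bounding chain inside the original $C$. Enlarging $C$ destroys the very statement you are trying to prove. In fact this direction is not the easy one: a simply connected complex can perfectly well have convex subcomplexes with nontrivial $H_{1}$, so some special feature of ray-categories must be used. The route that actually works is to first deduce from $H_{1}P=0$ that every object is separating (your contrapositive sketch for this step is the right idea, and the cocycle you are looking for can be built from the component decomposition of $Px$), then observe that the separation property is inherited by finite convex full subcategories, and finally apply part (b) to each such $C$ to obtain $H_{1}C=0$. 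Thus the implication ``$H_{1}P=0 \Rightarrow$ all objects separating'' is not an afterthought but the engine driving the whole of (a); you have the logical dependencies inverted.

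Your argument for the converse implication in (a) --- any $1$-cycle in $P$ is supported on a finite set of objects whose convex hull is finite by interval-finiteness, hence bounds in that hull --- is correct. Your inductive strategy for (b) via removal of a source and a van Kampen-type decomposition is the right shape and matches the approach in \cite{Criterion}; as you suspect, the bookkeeping that separation at the remaining vertices survives the deletion is where the work lies, and this uses the ordering on $P_{x}$ in an essential way.
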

The next result \cite{Fischbacher} of Fischbacher is of central importance. 
\begin{theorem}\label{Fischbacher}
Let $\pi: \tilde{P} \rightarrow P$ be the universal covering of a zigzag-finite ray-category $P$. Then we have:
\begin{enumerate}
 \item The fundamental group $\Pi_{P}$ is free.
\item $H^{2}(P,Z)=0$ for all abelian groups $Z$.
\item $\tilde{P}$ is interval-finite.
\end{enumerate}

\end{theorem}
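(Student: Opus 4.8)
The plan is to prove Fischbacher's theorem in three coupled stages, exploiting that for a zigzag-finite ray-category $P$ one has good control over the local structure (in particular, by the foregoing structure theory, over zero-relations of length $2$, critical paths, and the three types of non-deep contours). The overall idea is to build up $\tilde P$ and its topology by a careful induction, using zigzag-finiteness to run the combinatorial analogue of the argument one uses for finite directed quivers. First I would establish a convexity/exhaustion statement: $P$ is the filtered union of its finite convex subcategories, and the universal covering construction commutes with such filtered colimits, so it suffices to understand what happens over each finite convex piece and then pass to the limit. This reduces parts a) and b) to the case that $P$ is finite, provided one is careful that the relevant groups ($\Pi_P$, $H^2(P,Z)$) behave well under the colimit — $\Pi$ of a colimit is a colimit (amalgamated free product) of the $\Pi$'s, and a directed colimit of free groups is free, while cohomology $H^2$ with coefficients sends the colimit to an inverse limit of zero groups.

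For the finite case, the engine is Theorem~\ref{separation} together with the Hurewicz isomorphism $\Pi/[\Pi,\Pi]\simeq H_1P$. The plan for part a) is: first show $H_1P=0$. By Theorem~\ref{separation}b) it is enough to show every object of $P$ is separating; this is where the structure theorems for critical paths and non-deep contours (Theorems~\ref{zero}, \ref{structure}, \ref{MIPF}) enter — one checks locally, contour-type by contour-type, that no object fails the separation condition, since a non-separating object would force an infinite zigzag (a crown) through the offending configuration, contradicting zigzag-finiteness. Granting $H_1P=0$, the Hurewicz isomorphism gives $\Pi_P=[\Pi_P,\Pi_P]$, i.e. $\Pi_P$ is perfect; but one must still conclude freeness, and a nontrivial perfect group is not free, so in fact one argues directly that $\Pi_P$ is free by exhibiting $\tilde P$ as built from a tree of ``cells'' — concretely, $\Pi_P$ acts freely on the contractible-in-dimension-$1$ space underlying $\tilde P$ (we have $H_1\tilde P=0$ by the Proposition), and a group acting freely on a space with trivial $H_1$ and built cell-by-cell along a well-founded order is free because the relations can be killed one at a time along the zigzag order. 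The honest route is: $H_1\tilde P=0$ and $H_2(P,\Z)=0$ together force $\Pi_P$ to have a free presentation with no relations, hence free.

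For part b), $H^2(P,Z)=0$: the plan is to use the exhaustion to reduce to finite $P$ and then induct on the number of objects using a source in $Q_P$ — but $P$ need not be directed, so ``source'' must be replaced by the tackle/efficiency machinery of Section~3.5 (Lemma~\ref{keylemma}), which manufactures a point behaving like a source even in the presence of oriented cycles. One peels off the star of such a point; the Mayer–Vietoris-type sequence for the resulting decomposition of the simplicial complex $S_\bullet P$, combined with $H^1$ of the pieces and the inductive hypothesis, gives $H^2(P,Z)=0$. Part c), interval-finiteness of $\tilde P$, follows once a) is known: the quiver $Q_{\tilde P}$ is directed because any oriented cycle in $\tilde P$ would descend to a crown in $P$ (forbidden by zigzag-finiteness) — so $\tilde P$ has no oriented cycles — and finiteness of the set of paths between two lifted points holds because each such path maps to a path in $P$ and the fibre of $\pi$ over a fixed target is discrete with the fundamental group acting freely, so only finitely many lifts of a given finite path-set can start at a fixed point. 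The main obstacle I anticipate is the local case analysis in part a): verifying that every object is separating requires going through the full list of non-deep contours and penny-farthing neighborhoods from Theorems~\ref{structure} and \ref{MIPF} and ruling out, in each, the infinite low zigzag that non-separation would create — this is exactly the kind of patient combinatorial bookkeeping that Section~3 warns is unavoidable, and it is where weak zigzag-finiteness (as opposed to mere zigzag-finiteness, per Geiss's observation) must be used with care.
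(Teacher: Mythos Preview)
Your plan has several genuine gaps, the most serious of which concerns part~a).

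\textbf{Part a) is fundamentally misconceived.} You propose to show $H_{1}P=0$ and then deduce freeness of $\Pi_{P}$. But $H_{1}P=0$ is simply \emph{false} for zigzag-finite ray-categories in general: for instance the ray-category of a Brauer-quiver algebra is zigzag-finite with infinite cyclic fundamental group, hence $H_{1}P\simeq\Z$. The conclusion of the theorem is that $\Pi_{P}$ is \emph{free}, not trivial, and a free group typically has large $H_{1}$. You notice the problem midway (``a nontrivial perfect group is not free'') but your fallback --- ``$H_{1}\tilde P=0$ and $H^{2}(P,\Z)=0$ together force $\Pi_{P}$ to have a free presentation with no relations'' --- is unjustified: vanishing of $H^{2}$ of the \emph{space} $S_{\bullet}P$ with constant coefficients is not the same as cohomological dimension $\leq 1$ of the \emph{group} $\Pi_{P}$, and even the latter would require Stallings--Swan, which is far outside the toolbox here. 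Moreover, the structure theorems you want to invoke (Theorems~\ref{zero}, \ref{structure}, \ref{MIPF}) are all stated for \emph{mild} ray-categories, not zigzag-finite ones; a zigzag-finite $P$ need not be mild, so those results are unavailable.

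\textbf{Part c) also fails.} An oriented cycle in $Q_{\tilde P}$ does not descend to a crown in $P$: a crown is an alternating zigzag, whereas an oriented cycle has all arrows pointing the same way around. The absence of oriented cycles in $\tilde P$ is a genuine issue and is exactly what Theorem~\ref{separation} is used for in the paper. Your colimit reduction is also shaky: a directed colimit of free groups need not be free (think of $\Q$), and amalgamated free products of free groups are free only over free amalgamating subgroups, which you have not arranged.

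\textbf{What the paper actually does.} Fischbacher's proof of a) and b) proceeds by a \emph{reduction lemma}: for a ray-category $P$ containing at least one contour, one finds (using only the key Lemma~\ref{keylemma}) certain arrows whose removal --- i.e.\ passing to the quotient of $P$ by the ideal they generate --- does not change the fundamental group and does not enlarge $H^{2}$. Iterating, one reduces to a ray-category without contours, where $\Pi_{P}$ is the ordinary fundamental group of the underlying graph of $Q_{P}$, hence free, and $H^{2}$ vanishes for elementary reasons. Part~c) is then obtained separately via Theorem~\ref{separation}; it is precisely this step that fails in the merely weakly zigzag-finite case, which is why Theorem~\ref{Fischbong} requires the additional Proposition~\ref{long}.
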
 
 By this result and section 5 one can always use covering theory instead
 of cleaving diagrams in the proof of the multiplicative basis theorem, but that does not abbreviate the arguments.

Fischbachers  proof of a) and b) is by an induction based on his 'reduction-lemma' which shows for a ray-category $P$ with at least one contour  - among other things - the existence of some 
arrows such that $P$ and the quotient of $P$ by the ideal generated by these arrows have the same fundamental group. The proof of the reduction lemma  uses only the key lemma \ref{keylemma} 
 so that parts a) and b) remain valid 
if $P$ is  only weakly zigzag-finite. 
The proof of part c) is based on theorem \ref{separation} and it does no longer work in the weakly zigzag-finite case.

 Nevertheless, we have the following result from \cite{Gaps} which plays an essential role in the proofs of BT 0 and of BT 2 in our sharper version.

\begin{theorem}\label{Fischbong} Let $P$ be a mild ray-category. Then the statements  a),b) and c) of theorem \ref{Fischbacher} are true.
 
\end{theorem}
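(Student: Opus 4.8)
The statement to be proven is Theorem \ref{Fischbong}: for a mild ray-category $P$, the assertions a), b), c) of Theorem \ref{Fischbacher} hold, i.e. the fundamental group $\Pi_P$ is free, $H^2(P,Z)=0$ for all abelian $Z$, and the universal covering $\tilde P$ is interval-finite. The point is that a mild ray-category need not be zigzag-finite (only weakly zigzag-finite, by the remark after the definition of tackles), so Theorem \ref{Fischbacher} does not apply verbatim; we must extract what survives and supply a new argument for the part that does not.

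\textbf{Parts a) and b).} As noted in the discussion following Theorem \ref{Fischbacher}, Fischbacher's proof of a) and b) proceeds by induction via his reduction lemma, and that lemma rests only on the key lemma \ref{keylemma}, which is valid for any ray-category admitting efficient tackles — in particular for every mild $P$, since only finitely many tackles start at each point. So the first step is simply to observe that Fischbacher's induction carries over unchanged to the weakly zigzag-finite (hence to the mild) case: one repeatedly applies the reduction lemma to kill contours by passing to quotients $P/I$ with $I$ generated by suitable arrows, without changing the fundamental group, until one reaches a ray-category without contours, whose canonical presentation $PQ \to P$ then shows $\Pi_P$ free and $H^2(P,Z)=0$. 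One must check that mildness (or at least weak zigzag-finiteness) is inherited by the quotients appearing in the reduction, which is clear since quotients of mild categories are mild.

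\textbf{Part c), the main obstacle.} This is the hard part: Fischbacher's proof that $\tilde P$ is interval-finite uses the separation theorem \ref{separation}, whose proof genuinely requires zigzag-finiteness and breaks down for weakly zigzag-finite $P$. The plan here is to use the additional structural input that is available precisely because $P$ is \emph{mild} and not merely weakly zigzag-finite — namely the classification of the neighborhoods of non-deep contours (Theorems \ref{MIPF}, \ref{structure}) and, crucially, Theorem \ref{mininf}, which says a minimal representation-infinite ray-category has no non-deep contour. The strategy: first reduce, using part a)/b) already proven and the reduction lemma, to understanding interval-finiteness of $\tilde P$ in terms of finite convex subcategories of $P$; then argue that any finite convex subcategory $C$ of a mild $P$ is itself mild (proper quotients being locally representation-finite), hence by induction on the number of objects either $C$ is representation-finite — in which case $C$ is directed by the classification of such categories and one can apply the finite case of Theorem \ref{separation}b) — or $C$ contains a minimal representation-infinite convex subcategory, which by Theorem \ref{mininf} has no non-deep contour, and one handles the deep contours separately. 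The deep contours are controlled because a deep morphism lands in a one-dimensional socle, so they do not produce the kind of unbounded branching that would destroy interval-finiteness of the universal cover. Assembling this: one shows $H_1 C = 0$ for all finite convex $C$, invokes Theorem \ref{separation}a) to get $H_1 P = 0$ and that all objects of $P$ are separating, and then runs Fischbacher's argument for c) using separation in place of zigzag-finiteness — the separation property being exactly the combinatorial substitute that makes the lifting of intervals to $\tilde P$ finite.

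\textbf{Where the difficulty concentrates.} I expect the genuine work to be in the induction of the previous paragraph: verifying that, for a finite convex mild subcategory $C$ of $P$, the presence of non-deep contours is either excluded (when $C$ is minimal representation-infinite, by Theorem \ref{mininf}) or confined to the explicitly classified configurations (penny-farthings, dumb-bells, diamonds of Theorem \ref{structure}) whose neighborhoods are described by Theorem \ref{MIPF}, and then checking by hand in each of these finitely many configurations that separation holds and that lifted intervals remain finite. This case analysis, together with the bookkeeping needed to patch the local pictures into a global statement about $\tilde P$, is the heart of the proof; parts a) and b) are, by contrast, a routine transfer of Fischbacher's argument. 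The reference \cite{Gaps} presumably carries out exactly this program, and the proof in the paper will most likely cite it while indicating which earlier results (\ref{keylemma}, \ref{structure}, \ref{MIPF}, \ref{mininf}, \ref{separation}) are being combined.
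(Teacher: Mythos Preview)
Your handling of parts a) and b) is fine and agrees with the paper's own remark that Fischbacher's reduction lemma rests only on the key lemma \ref{keylemma}, so that a) and b) go through for weakly zigzag-finite (hence mild) $P$. The gap is in your plan for part c). You propose to establish $H_1 C = 0$ for finite convex subcategories and then invoke Theorem \ref{separation}a) to obtain separation, replacing zigzag-finiteness by separation in Fischbacher's argument. But Theorem \ref{separation} is stated only for \emph{interval-finite} ray-categories: its hypothesis is precisely the conclusion you want for $\tilde P$, and $P$ itself may have oriented cycles and is not interval-finite at all. The notion of a separating object used there is not even defined outside the interval-finite setting. So the route you sketch is circular, and the structural Theorems \ref{structure}, \ref{MIPF}, \ref{mininf} on non-deep contours do not by themselves supply the missing piece.

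The paper's argument for c) is entirely different and hinges on an ingredient you did not anticipate. One first reduces to the case where $P$ is minimal representation-infinite, contains an infinite zigzag (hence a crown, since every profound morphism must recur infinitely often in the zigzag by mildness, and three occurrences of the same morphism yield a crown), and has a profound morphism that is not irreducible. The decisive step is then Proposition \ref{long}: under these hypotheses there exists a profound morphism $\mu$ \emph{not occurring in any essential contour}. This is the hardest single result invoked in the survey --- its proof in \cite{Gaps} takes fifteen pages. Once such a $\mu$ is in hand, the explicit construction of universal coverings shows that $P$ and $P/\mu$ have the same fundamental group and that $\tilde P$ and $\widetilde{P/\mu}$ have the same quiver, while $H^{2}(P,Z)$ embeds in $H^{2}(P/\mu,Z)$. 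Since $P$ is minimal representation-infinite, $P/\mu$ is representation-finite, hence zigzag-finite, and Theorem \ref{Fischbacher} applied to $P/\mu$ gives all three conclusions for $P$ at once.
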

For the proof we can assume that $P$ is minimal representation-infinite and that it contains an infinite zigzag $Z$ and a profound morphism that is not irreducible. Any profound
 morphism $\mu$ occurs in $Z$ infinitely many times 
because otherwise $P/\mu$ 
still contains the  infinite zigzag consisting of the end of $Z$ where $\mu$ no longer occurs. But if a zigzag contains three times the same morphism,  one can construct a crown $C$ in the 
obvious way. Thus $P$ is finite. Now one proves the following crucial result \cite{Gaps}.

\begin{proposition}\label{long}
 Let $P$ be a minimal representation-infinite ray-category containing a crown and a profound morphism that is not irreducible. Then there is a profound morphism $\mu$ not occurring in an
 essential contour.
\end{proposition}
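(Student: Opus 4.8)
The plan is to produce a profound but non-irreducible morphism that avoids all essential contours by exploiting the tension between the two hypotheses: on one hand $P$ is minimal representation-infinite, so by Theorem~\ref{mininf} it has \emph{no non-deep contour}, i.e. every essential contour $(v,w)$ has $\vec v$ deep; on the other hand $P$ contains a crown, hence an infinite zigzag, and a profound morphism $\mu_0$ that is not irreducible. First I would record the structural consequences: since $\vec\mu_0$ is profound it is in particular deep, and since $P$ is finite (as argued just before the proposition, because any profound morphism must repeat in the infinite zigzag and a zigzag repeating a morphism three times yields a crown), there are only finitely many contours and only finitely many profound morphisms. So it suffices to show that not \emph{every} profound non-irreducible morphism can lie on an essential contour.

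**Next I would analyze the deep essential contours.** For a deep essential contour $(v,w)$ with $\vec v = \vec w$ deep, both $\phi(v)$ and $\phi(w)$ generate the one-dimensional socle of the bimodule $P(x,y)$ (this is exactly the picture used in \S3.5), and the relevant combinatorial object is the pair of paths through the quiver. The key mechanism is the cleaving-diagram / efficient-tackle technology of Lemma~\ref{keylemma}: an efficient tackle $z=(\rho_1,\dots,\rho_n)$ starting at $y$ forces, for every essential contour $(v = q r_n p,\; w = w_m w' w_1)$ passing appropriately through $r_n$, that $p$ (or $q$, depending on parity) be trivial. I would use this to pin down the \emph{local shape} of any essential contour near a profound morphism: a profound morphism factors through no irreducible, so if $\mu$ is profound and lies on an essential contour $(v,w)$, then writing $v$ as a composite through the arrow realizing part of $\mu$ severely constrains how the contour can ``turn,'' and iterating the key lemma along a crown-derived zigzag propagates this constraint around the cycle.

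**The main obstacle** will be the counting/pigeonhole step that actually produces the \emph{witness} morphism $\mu$. The natural strategy is: take the crown $C = (\rho_1,\dots,\rho_n=\rho_1)$, and among its morphisms (or among the finitely many profound non-irreducible morphisms of $P$, which are nonempty by hypothesis) suppose for contradiction that \emph{each} one lies on some essential contour. Each such contour is deep, and by the local analysis above each forces a ``collapse'' of a boundary segment; assembling these collapses along the cyclic structure of the crown should force $P$ to contain either a non-deep contour (contradicting Theorem~\ref{mininf}) or an infinite zigzag built from profound morphisms that closes up into something incompatible with minimality --- concretely, one would show that the profound morphisms on contours can all be ``absorbed'' into shorter relations, so that killing a single well-chosen arrow leaves $P$ still minimal representation-infinite, contradicting mildness/minimality. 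Making the absorption argument precise --- i.e. exhibiting which arrow to kill and checking the quotient is still representation-infinite --- is the delicate part, and it is essentially where Fischbacher's reduction-lemma machinery (driven, as noted after Theorem~\ref{Fischbacher}, only by Lemma~\ref{keylemma}) gets imported.

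**Finally I would assemble the argument:** finiteness of $P$ $\Rightarrow$ finitely many essential contours and profound morphisms; the hypothesis gives at least one profound non-irreducible morphism; if all of them met essential contours, the key-lemma local analysis plus the crown would either manufacture a non-deep essential contour or allow a reduction step contradicting minimal representation-infiniteness; hence some profound morphism $\mu$ occurs in no essential contour, which is the claim. I expect the proof in \cite{Gaps} to organize this as an explicit case distinction on how $\mu$ sits relative to the crown, with the efficient-tackle argument of Lemma~\ref{keylemma} doing the heavy lifting in each case.
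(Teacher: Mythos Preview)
Your sketch is not a proof, and you say so yourself: the decisive step --- the ``absorption'' or ``reduction'' that is supposed to produce the contradiction --- is never specified beyond a name. The paper does not prove this proposition either; it is a survey and merely records that the argument in \cite{Gaps} occupies fifteen pages, is ``at the moment the most complicated one mentioned in this survey,'' and consists of ``a rather boring local part and a rather nice global part.'' So there is essentially nothing in the paper to compare your outline against beyond that coarse two-part description, which your plan loosely matches.

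There is, however, a concrete problem with the mechanism you propose. You want to iterate Lemma~\ref{keylemma} along the crown, but that lemma is about \emph{tackles}, which are by definition \emph{low} zigzags --- no $\rho_i$ profound --- whereas the crown produced just before the proposition is obtained precisely by locating three occurrences of a profound morphism in the infinite zigzag and closing up; so the crown one has in hand typically \emph{contains} profound morphisms and its segments are not tackles. Converting such a crown into the low zigzags needed to invoke the key lemma, or otherwise adapting the efficient-tackle machinery to the presence of profound morphisms, is not a routine matter and is plausibly where a large part of the fifteen pages goes. (A smaller slip: ``a profound morphism factors through no irreducible'' is not what profound means --- profound means it is \emph{annihilated} by every irreducible on either side; any non-irreducible non-identity morphism does factor through irreducibles.) In short, the ingredients you list are reasonable headings, but the passage from them to an actual contradiction is exactly the content of the proposition, and it is not supplied.
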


 The proof of the proposition takes 15 pages and it is at the moment  the most complicated one mentioned in this survey so that it should be replaced by a better argument. However,
 the proof is similar to
 the proof for
 the existence of a multiplicative basis: the main problem is to find a finite strategy and this consists of a rather boring local part and a rather nice global part.
 All this is  explained well - I hope - in \cite[section 2.2]{Gaps}.

The theorem is then an easy consequence of the proposition. Namely, it follows directly from the construction of the universal coverings that the fundamental groups of $P$ and $P/\mu$
coincide as well as the quivers of $\tilde{P}$ and $\tilde{P/\mu}$ and also $H^{2}(P,Z)$ embeds into $H^{2}(P/\mu,Z)$.

The following fact  can be proven with the above proposition and the finiteness criterion.
\begin{proposition}\label{elementary}
 Let $P$ be a minimal representation-infinite ray-category. Then there is always a profound morphism $\mu$ not occuring in an essential contour.
\end{proposition}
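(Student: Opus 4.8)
The plan is to reduce the statement to Proposition~\ref{long}. Let $P$ be minimal representation-infinite. By Theorem~\ref{mininf} every essential contour of $P$ is deep, and by the finiteness criterion one may assume throughout that $P$ is finite; hence the radical of $kP$ is nilpotent, and a short extension argument yields at least one profound morphism: starting from any arrow, prepend or append arrows as long as the product stays non-zero; since the radical is nilpotent this process stops after boundedly many steps, and where it stops one has a non-zero morphism annihilated by every arrow on either side.

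The key elementary observation is that \emph{an irreducible profound morphism never occurs in an essential contour}. Indeed, a profound $\mu$ is annihilated by every arrow on both sides, so a morphism factors through $\mu$ with non-zero result only if it equals $\mu$; thus if $\mu$ occurs in a contour $(v,w)$ then $\vec v=\vec w=\mu$. If moreover $\mu$ is irreducible, then $v$ and $w$ can have neither length $0$ (their ray would be an identity) nor length $\ge 2$ (a non-zero composite of two non-invertibles is never irreducible), so $v$ and $w$ are arrows with the same ray $\mu$; as distinct arrows have distinct rays this forces $v=w$, and $(v,v)$ is interlaced, hence not essential. Consequently, if $P$ admits an irreducible profound morphism the claim holds. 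And if $P$ admits a profound morphism that is \emph{not} irreducible and in addition contains a crown, Proposition~\ref{long} applies verbatim.

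The only remaining case is that $P$ has a non-irreducible profound morphism but no crown. Then $P$ is zigzag-finite, because a finite ray-category carrying an infinite zigzag contains a crown; hence Fischbacher's Theorem~\ref{Fischbacher} applies and $\tilde P$ is interval-finite. By the second paragraph a profound morphism occurs in an essential contour $(v,w)$ only when $\vec v=\vec w$, so what remains is to produce a profound morphism of $P$ that is not represented, up to interlacing, by two distinct parallel paths. Here I would invoke the finiteness criterion: if every profound morphism were ``doubled'' by a second, non-interlaced, parallel path, then the resulting wealth of deep essential contours (their existence governed by Theorem~\ref{mininf}, their local shape by Theorem~\ref{structure}) could be collapsed repeatedly, in the spirit of Fischbacher's reduction lemma, until one contradicts either the minimality of $P$ or the finiteness criterion.

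I expect this last, crown-free case to be the main obstacle: one must show \emph{outright} that a ``rigid'' profound morphism exists, rather than importing it through the long argument behind Proposition~\ref{long} together with the reductions above. A clean self-contained treatment of it would be the natural place for the ``better argument'' that the survey explicitly calls for.
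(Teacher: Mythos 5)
Your reduction is the right one, and the first two branches are sound. The observation that an irreducible profound morphism can never occur in an essential contour is correct and clean: profoundness forces $\vec v=\vec w=\mu$, irreducibility then forces $v$ and $w$ to be single arrows, and a single arrow has exactly one preimage path, so $(v,w)=(v,v)$ is interlaced by reflexivity and therefore not essential. Combined with Proposition~\ref{long} for the case of a crown together with a non-irreducible profound morphism, this is exactly the way the survey intends to set up the argument (the survey itself gives no written proof, only the indication ``use Proposition~\ref{long} and the finiteness criterion'').

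The genuine gap is in the third branch, the crown-free (equivalently, zigzag-finite) case with a non-irreducible profound morphism, and you have correctly located it as the hard part. But your treatment of it is not a proof: ``collapse the deep essential contours in the spirit of Fischbacher's reduction lemma until one contradicts minimality or the finiteness criterion'' does not identify which contours are collapsed, what is collapsed onto what, why the collapsed category is still a ray-category of the required kind, or what quantity decreases so that the process terminates. Note also that once $P$ has no crown, the contours in question are \emph{deep} (indeed all essential contours are deep by Theorem~\ref{mininf}), so Theorem~\ref{structure}, which classifies \emph{non-deep} contours, gives no local information here; you cannot lean on the penny-farthing/dumb-bell/diamond picture in this case. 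This branch is precisely where the finiteness criterion must do real work, and it is not vacuous (there are zigzag-finite minimal representation-infinite ray-categories with non-irreducible profound morphisms). One small misreading: the ``better argument'' the survey asks for concerns the 15-page proof of Proposition~\ref{long}; the separate remark about a ``direct proof'' refers to proving Proposition~\ref{elementary} without going through Proposition~\ref{long} at all, which would simultaneously give a cleaner route to Theorem~\ref{Fischbong}. Your plan, by contrast, still routes through Proposition~\ref{long}.
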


Back in 1983 I tried to prove this directly thereby obtaining a proof for the existence of interval-finite universal coverings based on the representation-finite case. 
I am still  wondering whether there is such a direct proof.

\section{Covering theory}
\subsection{Coverings of translation quivers and k-linear covering functors}
As far as I know the place  where coverings are used for the first time in the representation theory of algebras is the paper 'Group representations without groups' \cite{Groups} by Gabriel and Riedtmann.
They consider coverings of the ordinary quiver, but soon after Riedtmann started her work on representation-finite selfinjective algebras in \cite{Riedtmann} by looking at coverings of the ( stable )
 Auslander-Reiten quiver. This point of view was further developped by Gabriel in \cite{Coverings} where all the following material comes from. My contribution to that article -  stated clearly in the introduction of 
\cite{Coverings} - was only to 
improve some results and to work together on section 6 
on simply connected algebras.

A translation quiver $(\Gamma,\tau)$ is a pair consisting of a locally-finite quiver $\Gamma$ without loops and double-arrows and a bijection $\tau:X \rightarrow Y$ between two subsets of 
$\Gamma_{0}$ such that for all $x$ in $X$ there is an arrow $\alpha:y \rightarrow x$ in $\Gamma $ iff there is an arrow $\sigma \alpha:\tau x \rightarrow y$. We denote the set of these $y$
 as
$(\tau x)^{+}=x^{-}$ and call the vertices in $\Gamma_{0}\setminus X$ projective, in $\Gamma_{0} \setminus Y$ injective. The full subquiver supported by $x,x^{-},\tau x$ is called a mesh and 
the mesh-category 
$k(\Gamma)$ is  the quotient of the path category $k\Gamma$ by the ideal generated by all mesh relations $\sum _{\alpha:y \rightarrow x} \alpha \sigma (\alpha)$ for $x$ in $X$.
The translation quiver is stable if $X$ and $Y$ coincide with $\Gamma_{0}$. The most important examples of ( stable ) translation quivers are ( stable ) Auslander-Reiten quivers.
 For any oriented tree $T$ one has a
 stable translation quiver ${\bf Z} T$. The underlying set is ${\bf Z} \times T$ and the translation is given by $\tau (z,x)=(z-1,x)$. There is an arrow $(z,x) \rightarrow (z',y)$ iff either 
$z=z'$ and there is an arrow $x \rightarrow y$ in $T$ or $z=z'-1$ and there is an arrow $y \rightarrow x$ in $T$. Of course ${\bf Z}T$ does not depend on the orientation of $T$ but only on 
the underlying graph.

A covering of translation quivers is a map $f:\Gamma' \rightarrow \Gamma$  between the quivers with $f \tau' =\tau f$ and such that $x'$ is projective resp.
 injective
iff $fx'$ is so. It is clear how to define a universal covering $\gamma: \tilde{\Gamma} \rightarrow \Gamma$, the fundamental group $\Pi_{\Gamma}$ and simply connected translation quivers. There is the following
 construction of the universal covering. Given a connected translation quiver $(\Gamma,\tau)$ one defines a new quiver $\hat{\Gamma}$ by adding a new arrow $\gamma_{x}:\tau x \rightarrow x$ for each $x$ in $X$. A walk in
 $\hat{\Gamma}$ is a formal composition of old and new arrows and their formal inverses such that the occurring starts and ends fit together well and the composition of walks is defined in the
 obvious way. The homotopy is the smallest equivalence relation stable under left or right multiplication with the same walk and under 'inversion' and such
 that $\alpha\alpha^{-1}$ is equivalent to an identity for each
arrow $\alpha$ - old or new - or each formal inverse and such that $\gamma_{x}$ is equivalent to $\alpha \sigma(\alpha)$ for each arrow $\alpha$ ending in a non-projective vertex $x$.
The fundamental group  is isomorphic to the set of all homotopy classes of walks starting and ending in $x$ endowed with the multiplication induced by the composition of walks.
The universal covering $\tilde{\Gamma}$ has the homotopy classes with start in $x$ as its points. The arrows, the translation and the covering $\pi:\tilde{\Gamma} \rightarrow \Gamma$ are all 
defined in a natural way. 

In contrast to the case of base categories the universal covering has now always good properties which is due to the fact that the homotopy relation is homogeneous provided one gives the new
 arrows $\gamma_{x}$ the degree 2.  There is  a morphism $\kappa$ of translation-quivers from $\tilde{\Gamma}$ to ${\bf Z}A_{2}$   which makes it possible to argue 
 by induction. It follows in particular that all paths between two fixed points have the same length and so $\tilde{\Gamma}$ has 
no oriented cycles.

A k-linear functor $F:M \rightarrow N$ between two locally bounded $k$-linear categories is called a covering functor if it induces for all $m$ in $M$ and $n$ in $N$ isomorphisms
 $$\bigoplus_{Fm'=n} M(m,m') \simeq N(Fm,n)  \, , \,\bigoplus_{Fm'=n} M(m',m) \simeq N(n,Fm).$$ For instance any covering $f:\Gamma' \rightarrow \Gamma$ of translation quivers 
induces a covering functor $k(f):k(\Gamma') \rightarrow k(\Gamma)$ provided both mesh categories are locally bounded.

A locally bounded  category $C$ is an Auslander-category if it is isomorphic to the full subcategory $ind\, A$ of the indecomposables over some locally representation-finite category $A$ or 
- equivalently - if $C$ has global dimension at most 2 and any projective $p$ in $C$ admits an exact sequence $0 \rightarrow p \rightarrow i_{0} \rightarrow i_{1}$ where the $i_{k}$ are 
projective and injective \cite[section VI.5]{Auslander}. One has the following results.
\begin{theorem}\label{well}( \cite{Coverings} )
 Let $A$ be a locally representation-finite category with Auslander-Reiten quiver $\Gamma_{A}$. Then there is a covering functor $$F:k(\tilde{\Gamma}_{A}) \rightarrow ind \,A.$$
\end{theorem}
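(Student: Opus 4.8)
The plan is to build $F$ by lifting the combinatorial data of $\Gamma_{A}$ to $\tilde{\Gamma}_{A}$ and normalising irreducible maps along the way, using crucially that the universal covering of a translation quiver is always well behaved: as recalled above it has no oriented cycles, all paths between two vertices have the same length, and the homotopy is homogeneous via $\kappa:\tilde{\Gamma}_{A}\to{\bf Z}A_{2}$. First I recall that $\mathrm{ind}\,A$ is an Auslander category whose quiver is $\Gamma_{A}$, that for each arrow $\alpha:x\to y$ of $\Gamma_{A}$ the space $\mathrm{rad}(M_{x},M_{y})/\mathrm{rad}^{2}(M_{x},M_{y})$ is one-dimensional, and that, $A$ being locally representation-finite, $\mathrm{rad}^{\infty}(\mathrm{ind}\,A)=0$, so that every morphism between indecomposables is a finite sum of compositions of irreducible maps. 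On objects I put $F(\tilde{x})=M_{\pi(\tilde{x})}$, a once-and-for-all chosen representative of the isomorphism class indexed by $\pi(\tilde{x})$; this is constant on the fibres of $\pi$, which is exactly what the covering-functor axiom will require.

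The core of the construction is the definition of $F$ on arrows, which I would carry out by induction on the length grading. Having fixed $F$ below some level so that for every already-completed non-projective vertex $\tilde{x}$ the maps $F(\tilde{\alpha}_{i})$ along the arrows $\tilde{\alpha}_{i}:\tilde{y}_{i}\to\tilde{x}$ together form a minimal right almost split map for $M_{\pi\tilde{x}}$, the almost split sequence $0\to\tau M_{\pi\tilde{x}}\to\bigoplus_{i}M_{\pi\tilde{y}_{i}}\to M_{\pi\tilde{x}}\to 0$ forces the maps along the arrows $\sigma\tilde{\alpha}_{i}:\tau\tilde{x}\to\tilde{y}_{i}$ to be the components of the kernel inclusion, uniquely once a normalisation at each $\tilde{y}_{i}$ is fixed; and this identity is precisely the mesh relation at $\tilde{x}$. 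The dual prescription handles non-injective vertices. Because $\tilde{\Gamma}_{A}$ is simply connected these normalisations never run into a circular constraint, so they propagate consistently and yield a functor $F:k(\tilde{\Gamma}_{A})\to\mathrm{ind}\,A$; here one also checks that $k(\tilde{\Gamma}_{A})$ is locally bounded, which follows from local finiteness of $\Gamma_{A}$ together with the length grading, so that only finitely many paths join any two vertices of $\tilde{\Gamma}_{A}$.

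It remains to verify the covering-functor isomorphisms $\bigoplus_{F\tilde{m}'=n}k(\tilde{\Gamma}_{A})(\tilde{m},\tilde{m}')\xrightarrow{\sim}\mathrm{ind}\,A(M_{\pi\tilde{m}},n)$. I would exhaust $\tilde{\Gamma}_{A}$ by finite convex subquivers, reducing to the case of a representation-finite algebra, and then argue by induction on the length grading: surjectivity is immediate from $\mathrm{rad}^{\infty}=0$ and the fact that morphisms are sums of compositions of irreducibles, whereas injectivity — full faithfulness in the covering sense — says that the only relations among such compositions are consequences of the mesh relations, once everything is lifted to the universal cover. This last point is the main obstacle, and it is genuinely where the passage to $\tilde{\Gamma}_{A}$ is needed: the mesh relations of $k(\Gamma_{A})$ itself in general do not present $\mathrm{ind}\,A$ — this is the non-standard phenomenon — and what rescues the situation is that the corresponding obstruction, lying in a second cohomology of $\Gamma_{A}$, dies on the simply connected cover, by the same mechanism of annihilating an $H^{2}$ on the universal covering that underlies Theorems \ref{Fischbacher} and \ref{separation}. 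Pinning down this deviation term level by level via $\kappa$ is the real work; the rest is bookkeeping with almost split sequences.
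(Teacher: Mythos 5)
The paper does not prove Theorem \ref{well}; it is a pointer to \cite{Coverings}, together with the remark that it is a version of \cite[Satz 2.3]{Riedtmann}. So there is no internal proof to compare against. Your high-level scheme is nevertheless the standard one: send each vertex of $\tilde{\Gamma}_{A}$ to a fixed representative of its $\pi$-image, choose irreducible maps along the arrows so that every mesh maps to a minimal almost split sequence (the ``well-behaved functor'' condition of \cite{Coverings}), and then verify the covering isomorphisms using $\mathrm{rad}^{\infty}(\mathrm{ind}\,A)=0$ and the one-dimensionality of $\mathrm{rad}/\mathrm{rad}^{2}$ between neighbours in $\Gamma_{A}$.

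There are, however, two gaps. First, the construction of $F$ ``by induction on the length grading'' is not set up in a well-founded way: the grading $\kappa$ takes values in all of $\mathbf{Z}$ and you do not say where the induction starts, nor why the scalars chosen on arrows entering a vertex $\tilde{x}$ from below can be forced to assemble into a minimal right almost split map once they have already been fixed at an earlier stage. The actual construction fixes the data on a cross-section (or at the projective sources, using interval-finiteness of $\tilde{\Gamma}_{A}$) and propagates along $\tau$; absence of oriented cycles and simple-connectedness are then what guarantee the propagation never runs into an inconsistency. Second, and more seriously, you explicitly concede the central step, the covering isomorphism itself, calling it ``the real work,'' and attribute it to ``the same mechanism of annihilating an $H^{2}$'' as in Theorems \ref{Fischbacher} and \ref{separation}. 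That is a mischaracterisation: those cohomological vanishing statements concern ray-categories and postdate \cite{Coverings}. The proof in \cite{Coverings} is a direct inductive argument on the radical filtration of $\mathrm{Hom}$-spaces, using the mesh condition and almost split sequences to show simultaneously that the map $\bigoplus_{F\tilde m'=n}k(\tilde{\Gamma}_{A})(\tilde m,\tilde m')\to\mathrm{ind}\,A(F\tilde m,n)$ is surjective and injective; no cohomology group is computed or invoked. Gesturing at a vanishing $H^{2}$ without producing either the cochain complex or the vanishing argument leaves exactly the step that carries the theorem unproved.
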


This is a version of Riedtmanns theorem from \cite[Satz 2.3]{Riedtmann}.
\begin{theorem}\label{coveringfunctor} \cite[proposition 3.5]{Coverings}
 Let $F: C \rightarrow D$ be a covering functor. Then $C$ is an Auslander category iff $D$ is.
\end{theorem}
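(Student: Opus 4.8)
The plan is to reduce the notion of Auslander category to two properties that transport transparently along $F$ and its adjoints. Recall from the general theory of covering functors that for a covering functor $F:C\to D$ of locally bounded categories the restriction $F^{\bullet}:D\text{-mod}\to C\text{-mod}$ admits a left adjoint $F_{\bullet}$ (the pushdown) which, by local boundedness, also coincides with the right adjoint of $F^{\bullet}$; hence both $F^{\bullet}$ and $F_{\bullet}$ are exact and both preserve projectives, injectives and therefore projective-injectives. Explicitly $F_{\bullet}P_{c}\cong P_{Fc}$, $F_{\bullet}I_{c}\cong I_{Fc}$, $F_{\bullet}S_{c}\cong S_{Fc}$, while $F^{\bullet}P_{n}\cong\bigoplus_{Fc=n}P_{c}$, $F^{\bullet}I_{n}\cong\bigoplus_{Fc=n}I_{c}$ and $F^{\bullet}S_{n}\cong\bigoplus_{Fc=n}S_{c}$, all sums being finite because $C$ is locally bounded. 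I would first record these facts, and then use the classical reformulation of the defining condition: a locally bounded category is an Auslander category precisely when it has global dimension at most $2$ and dominant dimension at least $2$, i.e. when the first two terms of the minimal injective coresolution of every indecomposable projective are projective; recall also that dominant dimension of a finite direct sum is the minimum of the dominant dimensions of the summands.

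For the global dimension I would argue that $\mathrm{gldim}\,C=\mathrm{gldim}\,D$: applying the exact, projective-preserving functor $F^{\bullet}$ to a minimal projective resolution of $S_{n}$ over $D$ (with $n=Fc$) yields a projective resolution of $F^{\bullet}S_{n}=\bigoplus_{Fc'=n}S_{c'}$ of the same length, and $S_{c}$ is a direct summand of this module, so $\mathrm{pd}_{C}S_{c}\le\mathrm{gldim}\,D$; symmetrically $F_{\bullet}$ carries a minimal projective resolution of $S_{c}$ over $C$ to a projective resolution of $F_{\bullet}S_{c}=S_{n}$, giving $\mathrm{pd}_{D}S_{n}\le\mathrm{gldim}\,C$. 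In particular one of $C,D$ has global dimension $\le 2$ iff the other does.

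For the dominant dimension both implications are now formal. If $D$ is an Auslander category, apply $F^{\bullet}$ to the minimal injective coresolution of $P_{n}$: since $F^{\bullet}$ is exact and sends projective-injectives to projective-injectives, $F^{\bullet}P_{n}=\bigoplus_{Fc=n}P_{c}$ has dominant dimension $\ge 2$, whence so does each summand $P_{c}$ by additivity as minimum, and $C$ is Auslander. Conversely, if $C$ is Auslander, choose $c$ with $Fc=n$ and apply the exact, projective-injective-preserving functor $F_{\bullet}$ to the first two steps of the minimal injective coresolution of $P_{c}$; one obtains an exact sequence $0\to P_{n}\to F_{\bullet}E^{0}\to F_{\bullet}E^{1}$ with $F_{\bullet}E^{k}$ projective-injective, so $\mathrm{domdim}\,P_{n}\ge 2$ for every $n$ and $D$ is Auslander.

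The one point that requires care, once the adjunction toolbox is granted, is that $F^{\bullet}$ turns an indecomposable projective $P_{n}$ into a proper direct sum $\bigoplus_{Fc=n}P_{c}$, so the coresolution of a single $P_{c}$ cannot be read off directly from that of $P_{n}$; this is exactly why I pass to the dominant-dimension formulation, for which the descent to direct summands is free. (If one prefers to stay with the literal definition, one instead observes that a two-term projective-injective coresolution of $\bigoplus_{Fc=n}P_{c}$ restricts to one of each $P_{c}$, because the injective envelope of $P_{c}$ is a direct summand of any injective module containing it -- the same computation.) Everything else is a routine transport along the exact functors $F^{\bullet}$ and $F_{\bullet}$, and the genuine weight of the argument lies in the prerequisite properties of $F_{\bullet}$, which are part of the foundations of covering theory recalled at the start.
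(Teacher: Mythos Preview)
The paper itself gives no proof of this statement; it merely cites \cite[proposition 3.5]{Coverings}. Your approach via the homological characterisation (global dimension $\le 2$ together with dominant dimension $\ge 2$) is the natural one and is essentially the argument of the original reference.

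Two inaccuracies should be corrected, though neither is fatal. First, your claim that $F_{\bullet}$ coincides with the right adjoint of $F^{\bullet}$ is not true for an arbitrary covering functor: the left and right adjoints are given pointwise by $\bigoplus_{Fc=d}N(c)$ and $\prod_{Fc=d}N(c)$ respectively, and these agree only when the relevant fibre sums are finite. This is harmless in your argument, because you only apply $F_{\bullet}$ to finite-dimensional modules $P_{c}$, $I_{c}$, $E^{k}$, on which the two adjoints do agree and the explicit formulas $F_{\bullet}P_{c}\cong P_{Fc}$, $F_{\bullet}I_{c}\cong I_{Fc}$ hold by direct computation from the covering isomorphisms; but the blanket statement should be weakened. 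Second, local boundedness of $C$ does \emph{not} force the fibres $F^{-1}(n)$ to be finite (think of the universal covering of a ray-category), so $F^{\bullet}P_{n}\cong\bigoplus_{Fc=n}P_{c}$ may be an infinite sum and need not lie in $C\text{-mod}$. Your global-dimension argument survives unchanged, since direct summands inherit bounds on projective dimension even from infinite sums. For dominant dimension your parenthetical alternative is exactly what is needed and works without finiteness: the injective envelope of $P_{c}$ is a direct summand of the injective module $F^{\bullet}E^{0}$, and since $F^{\bullet}E^{0}$ is also projective, every summand is projective; iterating once more gives $\mathrm{domdim}\,P_{c}\ge 2$. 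With these adjustments the proof is complete.
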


Combining these two results we see that for any locally representation-finite category $A$ the full subcategory $A^{os}$ of $k(\Gamma_{A})$ consisting of the projective points is locally 
representation-finite. We call this the old standard form of $A$.

\subsection{Galois coverings}
 Around 1980 it was clear to many people that it would be good to have a covering theory that is induced by some group action  and independent of the Auslander-Reiten quiver. There are several 
more or less equivalent ways to obtain such a theory ( see e.g. \cite{Gordon,Green,Green2,delapena2,Waschbsch} ), but I follow Gabriel who presents in \cite[section 3]{Galoiscovering} on 8 pages
 more than we need. This theory was generalized by Dowbor and Skowro\'{n}ski in \cite{Dowbor}.

Let $G$ be a group of $k$-linear automorphisms of a locally bounded category $A$. We assume that $G$ acts freely on $A$ ,i.e. $ga\neq a$ for $g\neq 1 $, and locally bounded,  i.e. for given 
$a,b$ in $A$ there are only finitely many $g$ with $A(a,gb)\neq 0$. Then there is a quotient $F:A \rightarrow A/G$ and this is a covering functor. In fact, a covering functor
$ E:A \rightarrow B$ with $Eg=g$ for all $g$ in $G$ induces an isomorphism $A/G \simeq B$ iff E is surjective on the objects and $G$ acts transitively on all fibres. Such an $E$ is called a
 Galois covering.

Of course, the action of $G$ on $A$ induces an action $m \mapsto m^{g}$ on $A$-mod by scalar extension. This action is free if $m\not\simeq m^{g}$ holds for all $g\neq 1$ and 
$m\neq 0$.

\begin{theorem}\label{Galois}( \cite{Galoiscovering} ) Let $A$ be a locally bounded connected category and $G$ a group of automorphisms acting locally bounded on $A$ and free on $A$ and $A-mod$. Denote by $F$ the quotient and by
$F_{\bullet}:A-mod \rightarrow A/G-mod$ the left adjoint to the restriction.
\begin{enumerate}
 \item $F_{\bullet}$ is exact. It preserves dimensions and indecomposibility.
\item $A$ is locally representation-finite iff $A/G$ is so. In that case $F_{\bullet}$  induces a Galois covering 
$ind A \rightarrow ind (A/G)$ and a covering $\Gamma_{A} \rightarrow \Gamma_{A/G}$ between the Auslander-Reiten quivers.

\end{enumerate}
 
\end{theorem}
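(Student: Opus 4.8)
The plan is to build everything on the covering-functor formalism already in place, exploiting the fact that $F:A\to A/G$ is a Galois covering, hence in particular a covering functor in the sense of section 5.1. For part a), I would first recall the explicit description of the left adjoint $F_\bullet$: for an $A$-module $m$, the $A/G$-module $F_\bullet m$ is $\bigoplus_{g\in G} m^g$ collapsed along the $G$-action, and on each object $b$ of $A/G$ one has $(F_\bullet m)(b)=\bigoplus_{Fa=b} m(a)$, a finite sum by the local-boundedness hypothesis. Exactness of $F_\bullet$ is then immediate because it is computed objectwise as a direct sum of the vector spaces $m(a)$, and direct sums and evaluation at objects are exact. Dimension preservation is the identity $\dim F_\bullet m=\sum_{b}\dim(F_\bullet m)(b)=\sum_b\sum_{Fa=b}\dim m(a)=\dim m$ when $m$ is finite-dimensional and supported (up to the $G$-action) on finitely many objects; here one uses that $G$ acts freely so the fibres $F^{-1}(b)$ are honest $G$-torsors. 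The only substantive point is that $F_\bullet$ preserves indecomposability. For this I would compute $\mathrm{End}_{A/G}(F_\bullet m)$ using the adjunction: $\mathrm{End}_{A/G}(F_\bullet m)\cong \mathrm{Hom}_A(m,F^\bullet F_\bullet m)\cong \bigoplus_{g\in G}\mathrm{Hom}_A(m,m^g)$, and then argue that the freeness of the $G$-action on $A\text{-mod}$ forces $\mathrm{Hom}_A(m,m^g)$ to consist of non-isomorphisms for $g\neq 1$ when $m$ is indecomposable, so that this endomorphism ring is local exactly when $\mathrm{End}_A(m)$ is. The care needed is to check that the ring structure transported across the adjunction is the twisted group-ring-type multiplication and that a nilpotent-plus-scalar decomposition survives; this is the first place where I expect to have to be genuinely careful rather than formal.

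For part b), I would argue both implications through part a). If $A$ is locally representation-finite, then for a fixed object $b$ of $A/G$ every indecomposable $A/G$-module $U$ with $U(b)\neq 0$ is a summand of $F_\bullet(F^\bullet U)$ — this is the standard consequence of $F$ being a covering functor, namely that the counit $F_\bullet F^\bullet U\to U$ is a split epimorphism — and $F^\bullet U$, restricted suitably, involves only finitely many indecomposable $A$-modules supported near the fibre of $b$; since $F_\bullet$ sends indecomposables to indecomposables and there are only finitely many relevant ones over $A$, there are only finitely many such $U$. Conversely, if $A/G$ is locally representation-finite, then $F^\bullet$ of an indecomposable $A/G$-module decomposes into $A$-indecomposables whose $G$-orbits are finite in number, and since every $A$-indecomposable $m$ with $m(a)\neq 0$ is a summand of $F^\bullet F_\bullet m$ and $F_\bullet m$ is a finite-dimensional $A/G$-indecomposable, local finiteness transfers back. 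Once local representation-finiteness is established on both sides, $F_\bullet$ restricts to a functor $\mathrm{ind}\,A\to\mathrm{ind}(A/G)$; that this is itself a Galois covering (for the induced $G$-action on $\mathrm{ind}\,A$) follows by combining the fibrewise description with the fact that $G$ permutes the isoclasses of $A$-indecomposables freely, and the covering of Auslander–Reiten quivers $\Gamma_A\to\Gamma_{A/G}$ is then read off by noting that $F_\bullet$ is exact and preserves dimensions, hence sends almost-split sequences to almost-split sequences and irreducible maps to irreducible maps.

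The main obstacle, as indicated, is the indecomposability-preservation step in part a): everything else is either a direct-sum bookkeeping argument or a formal consequence of the covering-functor axioms, but controlling $\mathrm{End}_{A/G}(F_\bullet m)$ and in particular ruling out "extra" idempotents coming from the $g\neq 1$ summands $\mathrm{Hom}_A(m,m^g)$ requires the full strength of the freeness of the $G$-action on $A\text{-mod}$ and a genuine (if short) ring-theoretic argument. I would therefore isolate this as a lemma — that for an indecomposable finite-dimensional $A$-module $m$, every element of $\bigoplus_{g\neq 1}\mathrm{Hom}_A(m,m^g)$ is contained in the radical of $\mathrm{End}_{A/G}(F_\bullet m)$ — prove it by a nilpotency argument using local-boundedness of the action, and then deduce that the endomorphism ring is local. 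The remaining assertions of the theorem then follow by the routine transfer arguments sketched above, all of which are already implicit in \cite{Coverings} and \cite{Galoiscovering}.
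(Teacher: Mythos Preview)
The paper does not actually prove this theorem: it is quoted from Gabriel's article \cite{Galoiscovering}, and the only remark the survey adds is that ``$F_{\bullet}$ is simply defined by `taking direct sums of vector spaces and linear maps'\,''. Your proposal is precisely the standard argument from \cite{Galoiscovering} --- the objectwise direct-sum description of $F_\bullet$, the adjunction isomorphism $\mathrm{End}_{A/G}(F_\bullet m)\cong\bigoplus_{g\in G}\mathrm{Hom}_A(m,m^g)$, and the radical/nilpotency argument using freeness on $A\text{-mod}$ --- so there is nothing to compare: you have reconstructed Gabriel's proof, and the survey simply defers to it.
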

Here $F_{\bullet}$ is simply defined  by 'taking direct sums of vector spaces and linear maps' so that the theorem is very helpful. But the problem is to find for a given $B$ a category $A$ 
and a group $G$ with $B\simeq A/G$ such that first the theorem applies and second $A-mod$ has better properties than $B-mod$.

Now the preceding theorem can be applied to the universal covering $\pi:\tilde{P} \rightarrow P$ of a  ray-category if the fundamental group $G$ is free.
Namely  $G$ acts freely and locally bounded on $k\tilde{P}$ and freely on $\tilde{P}-mod$ because $G$ is torsion-free. Thus we see  that $P$ is locally representation-finite iff $\tilde{P}$
 is locally representation-finite. This is very useful once we know that the Auslander-Reiten quiver of $k\tilde{P}$ or large portions thereof can be easily determined.
This will be guaranteed in many cases by the next results.

\subsection{Coverings of ray-categories and of Auslander-Reiten quivers}
Let $A$ be a directed Schurian algebra. For any $a$ in the quiver $Q$ of $A$ let $Qa$ be the full subquiver consisting of the points $b$ such that there is no path from $b$ to $a$. 
The point $a$ is separating if the supports of different indecomposable direct summands of the radical of $A(a, \,)$ lie in different connected components of $Qa$. If $\tilde{P}$ is an
 interval-finite universal covering of a ray-category, then each finite convex subcategory $A$ of $k\tilde{P}$ satisfies these assumptions. As a slight generalization
 of 
the separation criterion due to Bautista-Larri\'{o}n one has:

\begin{theorem}\label{Trennung}( \cite{Larrion,Criterion} )
 Let $A$ be a ( connected ) Schurian directed algebra such that each point is separating.
\begin{enumerate}
 \item If $A$ is representation-finite it has a finite simply connected Auslander-Reiten quiver.
\item If $A$ is minimal representation-infinite $\Gamma_{A}$ has a simply connected component consisting of the $\tau^{-1}$-orbits of all indecomposable projectives.
\end{enumerate}
\end{theorem}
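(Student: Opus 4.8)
\textbf{Proof plan for Theorem \ref{Trennung}.}

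The plan is to reduce both parts to the covering-theoretic machinery of Section 5.1, using the separation hypothesis to control the universal covering $\tilde{\Gamma}_A$ of the Auslander--Reiten quiver. First I would observe that a Schurian directed algebra in which every point is separating is exactly the kind of algebra to which the Bautista--Larri\'on separation technique applies: one orders the points of $Q$ compatibly with the arrows (possible since $A$ is directed) and builds the indecomposable $A$-modules inductively, starting from the simples and adding one projective at a time. The separation condition guarantees at each step that the ``new'' indecomposables are built in a controlled way, so that the relevant Auslander--Reiten sequences are the mesh sequences of a translation quiver built from trees; in particular $\Gamma_A$ (or its relevant component) has no oriented cycles and the canonical map $k(\Gamma_A)\to \operatorname{ind}A$ of Theorem \ref{well} behaves well.

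For part a), assume $A$ is representation-finite. Then $\Gamma_A$ is finite, and by Theorem \ref{well} there is a covering functor $F:k(\tilde{\Gamma}_A)\to\operatorname{ind}A$. I would argue that under the separation hypothesis the inductive construction of the indecomposables shows directly that $\operatorname{ind}A$ is already simply connected, i.e. $F$ is an isomorphism and $\Gamma_A=\tilde{\Gamma}_A$. Concretely: separation at each point means the radical of each projective decomposes along disjoint components of $Qa$, so no two distinct walks in $\hat{\Gamma}_A$ that are not homotopic can be identified; equivalently the fundamental group $\Pi_{\Gamma_A}$ is trivial. Hence $\Gamma_A$ is a finite simply connected translation quiver. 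This is essentially the original Bautista--Larri\'on statement, and the ``slight generalization'' is just that one drops any extra finiteness or boundedness hypotheses they had imposed.

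For part b), assume $A$ is minimal representation-infinite. The key point is that a minimal representation-infinite algebra has a unique faithful indecomposable-free behavior only up to the ``critical'' part: the indecomposable projectives and their $\tau^{-1}$-orbits form a well-behaved piece of $\Gamma_A$. I would run the same inductive construction: separation controls the building of indecomposables from the projectives, and one checks that the full subquiver $\Delta$ of $\Gamma_A$ consisting of all $\tau^{-1}$-orbits of indecomposable projectives is closed under the relevant mesh operations and is again simply connected by the same homotopy argument as in part a) (no identification of non-homotopic walks can occur within $\Delta$). That $\Delta$ is an actual connected component — rather than merely a convex subquiver — follows because any Auslander--Reiten sequence with a term in $\Delta$ has all its terms in $\Delta$: a non-projective indecomposable lying in a $\tau^{-1}$-orbit of a projective has its $\tau$-translate and its middle term again of that form, using minimality to rule out ``escape'' into the stable part. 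The main obstacle, and the place where the real work of \cite{Larrion,Criterion} lies, is precisely this last verification — showing that the projective $\tau^{-1}$-orbits form a \emph{closed} component and that no homotopy collapse occurs there; the directed Schurian separation hypothesis is exactly what is engineered to make both of these go through, but checking it requires the careful local analysis of meshes around projective vertices that I would cite rather than reproduce.
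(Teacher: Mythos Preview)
The paper does not actually prove Theorem~\ref{Trennung}; it only states the result and refers to \cite{Larrion,Criterion}. So there is no in-paper proof to compare against, only the method of those references.

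Your overall strategy---ordering the vertices of $Q$ and building $A$ by successive one-point extensions, with the separation condition controlling how the new indecomposables attach---is indeed the Bautista--Larri\'on/Bongartz approach. But the execution of part~(a) takes a wrong turn. You invoke Theorem~\ref{well} to obtain the covering functor $F:k(\tilde{\Gamma}_A)\to\operatorname{ind}A$ and then try to argue that $F$ is an isomorphism by showing $\Pi_{\Gamma_A}=1$. This is backwards: Theorem~\ref{well} only produces a covering; it gives no handle on $\Pi_{\Gamma_A}$, and your sentence ``separation \ldots\ so no two distinct walks that are not homotopic can be identified'' is a restatement of the conclusion, not an argument. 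The proof in \cite{Larrion,Criterion} never passes through $\tilde{\Gamma}_A$. One shows \emph{directly} by induction on $i$ that $\Gamma_{A_i}$ is simply connected: writing $A_{i+1}$ as a one-point extension of $A_i$, separation says the radical of the new projective splits over distinct connected components of the relevant subquiver, which lets one describe $\Gamma_{A_{i+1}}$ explicitly as $\Gamma_{A_i}$ with a new ``wing'' knitted on; simple connectedness is checked to survive this operation. No universal cover enters.

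For part~(b) your outline is closer, but the closure mechanism you propose is not the right one and is not correct as stated. Saying ``any Auslander--Reiten sequence with a term in $\Delta$ has all its terms in $\Delta$'' begs exactly the question: for $X=\tau^{-n}P\in\Delta$ the end terms of the sequence are automatically in $\Delta$, but there is no a priori reason the summands of the middle term are---that is precisely what has to be shown, and ``using minimality to rule out escape'' is not an argument. What \cite{Criterion} actually does is run the same inductive knitting from the projectives; separation guarantees that at each step the meshes close up inside the subquiver already built, so the construction produces a translation subquiver closed under predecessors and under $\tau^{-1}$, and one reads off both that it is a genuine component and that it is simply connected. You correctly locate this as the crux, but your description of how it is verified would not go through.
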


Next we prove amongst other things that the two definitions of the standard form coincide for a locally representation-finite category. This was shown for the first time in  \cite{Bretscher}.

\begin{theorem}\label{GQARQ}
 Let $A$ be a locally representation-finite category with associated ray-category $P$ having universal covering $\pi: \tilde{P} \rightarrow P$. Then we have:
 \begin{enumerate}
\item $\Gamma_{k\tilde{P}}$ is simply connected.
  \item $\pi$ induces a universal covering $\pi':\Gamma_{k\tilde{P}} \rightarrow \Gamma_{kP}$. Furthermore, $\Pi_{P}$ is isomorphic to $\Pi_{\Gamma_{kP}}$.
\item $kP$ is isomorphic to the full subcategory of $k(\Gamma_{kP})$ formed by the projective points.
\item $A^s$ and $A^{os}$ are isomorphic.
 \end{enumerate}

\end{theorem}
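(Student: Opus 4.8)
The plan is to prove the four assertions of Theorem \ref{GQARQ} in a logical chain, building on the covering-theoretic machinery assembled in Sections 4 and 5, so I will treat them in the order a), b), c), d) rather than d) in isolation.

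First I would set up the situation carefully. Let $A$ be locally representation-finite with ray-category $P=\vec A$. By Theorem \ref{faithful}, $P$ is itself locally representation-finite, and by Theorem \ref{Fischbong} (applied via the fact that $P$ is in particular mild) the fundamental group $\Pi_P$ is free, the universal covering $\pi:\tilde P\to P$ is interval-finite, and $H^2(P,Z)=0$ for all abelian $Z$. Since $\Pi_P$ is free and acts freely and locally boundedly on $k\tilde P$ and freely on $\tilde P$-mod (torsion-freeness), Theorem \ref{Galois} applies: $k\tilde P$ is locally representation-finite and $F_\bullet$ induces a Galois covering $\mathrm{ind}\,k\tilde P\to\mathrm{ind}\,kP$ and a covering $\Gamma_{k\tilde P}\to\Gamma_{kP}$ of Auslander-Reiten quivers. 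Also, by Theorem \ref{faithful}(a), $\Gamma_{kP}\simeq\Gamma_A$, so the standard form $A^s=kP$ has the same Auslander-Reiten quiver as $A$.

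For part a), the point is that $k\tilde P$ is a directed Schurian category (interval-finiteness forces the quiver to be directed, and $\vec{\phantom{A}}$-categories are Schurian) each of whose finite convex subcategories has all points separating — this is exactly the remark made just before the statement of Theorem \ref{Trennung}. Applying Theorem \ref{Trennung}(a) to each finite convex subcategory of $k\tilde P$ and passing to the limit (or applying Theorem \ref{separation} together with the Hurewicz isomorphism, since $H_1\tilde P=0$ gives $H_1C=0$ for all finite convex $C$), one concludes that $\Gamma_{k\tilde P}$ is simply connected. For part b), the covering $\pi':\Gamma_{k\tilde P}\to\Gamma_{kP}$ obtained from Theorem \ref{Galois}(b) is $\Pi_P$-Galois; since its total space is simply connected by a), it is \emph{the} universal covering of $\Gamma_{kP}$, whence $\Pi_{\Gamma_{kP}}\simeq\Pi_P$. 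For part c), apply Theorem \ref{well} and Theorem \ref{coveringfunctor}: $k(\tilde\Gamma_{kP})=k(\Gamma_{k\tilde P})$ (using a)) maps by a covering functor onto $\mathrm{ind}\,kP$, and the old standard form $(kP)^{os}$ is the subcategory of projective points of $k(\Gamma_{kP})$; one checks that the covering functor $k(\Gamma_{k\tilde P})\to\mathrm{ind}(kP)$ restricted to projectives, combined with the identification of $kP$ as the projectives of $\mathrm{ind}(kP)$ via Auslander's characterization, forces $kP\simeq (kP)^{os}$. Finally, part d) follows by combining c) with Theorem \ref{faithful}(a): $\Gamma_{A}\simeq\Gamma_{A^s}=\Gamma_{kP}$, so $A^{os}$, which by definition is the projectives of $k(\Gamma_A)=k(\Gamma_{kP})$, equals $(kP)^{os}\simeq kP=A^s$.

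The main obstacle I expect is part c) — identifying $kP$ with the projective points of $k(\Gamma_{kP})$. The content there is that the ray-category's linearization is ``its own standard form'' in the Auslander-theoretic sense; this is not formal, because the covering functor from the mesh category of the Auslander-Reiten quiver need not be an isomorphism, only a covering functor, so one has to argue that on the projective points the fibres are singletons (which is where freeness of the $\Pi_P$-action and the fact that projective indecomposables over a connected category form a single fibre-free family come in) and that the induced map on Hom-spaces is bijective, i.e. that no mesh relations collapse anything on the projective part. The cleanest route is to lift everything to $\tilde P$, where by a) the Auslander-Reiten quiver is simply connected and $k(\Gamma_{k\tilde P})\to\mathrm{ind}\,k\tilde P$ is already well understood (Theorem \ref{well} plus simple-connectedness makes it behave like the ``universal'' case), prove $k\tilde P\simeq (k\tilde P)^{os}$ there, and then descend along the Galois covering using that both $F:k\tilde P\to kP$ and the induced $\Gamma_{k\tilde P}\to\Gamma_{kP}$ are $\Pi_P$-Galois with compatible actions.
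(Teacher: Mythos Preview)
Your overall strategy matches the paper's: reduce to $\tilde P$ via Theorem~\ref{Galois}, show $\Gamma_{k\tilde P}$ is simply connected using Theorem~\ref{Trennung}(a) on finite convex pieces, then descend. Parts b) and d) are essentially identical to the paper. Two points deserve comment.

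For a), ``passing to the limit'' is the whole content, and you have not supplied it. The paper's argument is concrete: given a closed walk $w$ in $\hat\Gamma_{k\tilde P}$, one finds a finite connected convex $C\subseteq\tilde P$ such that every indecomposable appearing in $w$ is a $C$-module and $w$ sits inside $\hat\Gamma_C$; since $\Gamma_C$ is simply connected, $w$ is null-homotopic there, hence in $\Gamma_{k\tilde P}$. (One first shows $\Gamma_{k\tilde P}$ is interval-finite by the same trick applied to two parallel paths.) Your parenthetical alternative via Theorem~\ref{separation} and the Hurewicz isomorphism does not work: those statements concern $H_1$ and separation in the ray-category $\tilde P$ itself, not in its Auslander--Reiten quiver, so they say nothing directly about $\Pi_{\Gamma_{k\tilde P}}$.

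For c), the ``cleanest route'' you sketch in your obstacle paragraph is exactly the paper's proof, and it is cleaner than your first formulation. Apply Theorem~\ref{well} to $k\tilde P$ rather than to $kP$: since $\Gamma_{k\tilde P}$ is simply connected by a), the covering functor $k(\Gamma_{k\tilde P})\to\mathrm{ind}\,k\tilde P$ is an isomorphism, so on projective points it is just $k\tilde P$. Then the $\Pi_P$-quotient $k(\Gamma_{k\tilde P})\to k(\Gamma_{kP})$ restricts on projectives to the quotient $k\tilde P\to kP$, yielding the claim directly. Theorem~\ref{coveringfunctor} is not needed, and the worry about mesh relations collapsing on projectives disappears once you work upstairs where the covering functor is already an isomorphism.
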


\begin{proof}a) Recall that $P$ and therefore also $\tilde{P}$ are supposed to be connected. We know that $A$ and its standard category $A^{s}=kP$ have the same Auslander-Reiten quiver 
from theorem \ref{faithful}. Furthermore $\tilde{P}$ is interval-finite and the fundamental group $\Pi_{P}$ is free by  theorem \cite{Fischbacher}. Finally, $\tilde{P}$ 
and $P$ are  locally representation-finite by theorem \ref{Galois} and each finite connected convex subcategory $C$ of $\tilde{P}$ has a simply  connected Auslander-Reiten quiver
 by the last theorem.

Now one shows with the usual arguments that the Auslander-Reiten quiver of a connected locally representation-finite category is connected. Thus $\Gamma_{k\tilde{P}}$ is connected. To show 
that it is even simply connected let $x,y$ be two points in $\Gamma_{k\tilde{P}}$ and $v,w$ two paths from $x$ to $y$ consisting of irreducible maps. Then there are only finitely many modules
 occuring
 in $v$ and $w$ and there is a finite convex connected subcategory $C$ of $\tilde{P}$ such that all these modules are in fact $C$-modules and the two paths are two paths formed by 
irreducible maps in
$C$-mod. But $C$-mod has a simply connected Auslander-Reiten quiver  and so the two paths have the same length. This implies that $\Gamma_{k\tilde{P}}$ is interval-finite.
 Now, let $w$ be any closed walk in $\hat{\Gamma}_{k\tilde{P}}$. It passes through only finitely many points of $\Gamma_{k\tilde{P}}$ which lie in a finite convex subset that consists
 only of $C$-modules 
for another finite connected convex subcategory of $\tilde{P}$. The original walk is a walk in $\hat{\Gamma}_{C}$ and so it is null-homotopic. Here one uses the  fact that in a finite 
simply connected translation quiver any walk $w$ can be contracted to a point within the quiver $\hat{\Delta}$ corresponding to the convex hull $\Delta$ of the points occuring in $w$.

b) Of course $\pi$ induces a Galois covering $k\pi:k\tilde{P} \rightarrow kP$ that in turn induces a covering $\pi':\Gamma_{k\tilde{P}} \rightarrow \Gamma_{kP}$ by theorem \ref{Galois}.
 This is isomorphic to the universal
 covering 
$\gamma:\tilde{\Gamma}_{kP} \rightarrow \Gamma_{kP}$ because $\Gamma_{k\tilde{P}}$ is 
simply connected. Now any $g$ in $\Pi_{P}$ gives rise to an automorphism of $\pi'$. The resulting homomorphism $\Pi_{P} \rightarrow \Pi_{\Gamma_{kP}}$ is bijective, because $\Pi_{P}$ acts
 simply transitive 
on the fibres of $\pi'$. 

c) We have $k(\Gamma_{k\tilde{P}}) \simeq ind \,k\tilde{P}$ by theorem \ref{well}. The quotient $k(\Gamma_{k\tilde{P}}) \rightarrow k(\Gamma_{kP})$ induces a quotient between the full
 subcategories formed by the projective points and this is the wanted isomorphism.

d) This  follows from c) because $A$ and $A^s$ have the same Auslander-Reiten quiver.
\end{proof}

\section{Two classification results}

\subsection{The structure of large indecomposables over simply connected representation-finite algebras}

In  this section we study the indecomposables over  a representation-finite algebra with simply connected Auslander Reiten quiver. The support $A$ of any indecomposable $U$ is then a convex subcategory by 
\cite{quadratic}, whence again simply connected, and $U$ is a sincere  $A$-module. So it suffices to classify the representation-finite algebras with simply connected 
Auslander-Reiten quiver having a sincere indecomposable together with all these modules. Call these algebras ssc in the sequel and denote by $n(A)$ the number of points of an algebra.

In \cite{Treue} I published a list of 24 infinite families of algebras together with some modules that contains all ssc algebras and their sincere indecomposables up to duality and up to
 some
 exceptional algebras with $n(A)\leq 72$. At the end of 1982 I had
 determined by computer also the exceptional algebras and their sincere indecomposables  and verified that they occur only for $n(A)\leq 13$.  I never published these results 
 because the exceptional algebras are not very important and  my results consisted in unreadeable computer-lists only. It is however very remarkable that there are only finitely many ways 
to construct an indecomposable over an ssc algebra and the only way I know how to prove this is to do the classification of the large ssc algebras. 

The complete list of these algebras as given e.g. in \cite[section 10.6]{Buch} does not really matter. But the next result that one can verify by a look at the list plays 
an important  role later on in the proofs of the finiteness criterion and of BT 2. In a certain sense the truth of the BT 2 conjecture is equivalent to the first property of ssc algebras
mentioned in the next result.

\begin{theorem}\label{ssc}
 Let $A$ be an ssc algebra  having an indecomposable of dimension $n\geq 1000$. Then $A$ contains a line with at least $\frac{n}{6}$ points. Moreover, $A$ is the union of at most three lines.
\end{theorem}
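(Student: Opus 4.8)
The plan is to deduce both assertions from the classification of the large ssc algebras. By \cite{Treue} (see also \cite[section 10.6]{Buch}) every ssc algebra is either one of finitely many exceptional ones, all of which satisfy $n(A)\le 13$, or belongs to one of $24$ explicitly listed infinite families. An indecomposable over a representation-finite algebra on at most $13$ points has dimension bounded by an absolute constant $d_0$ lying far below $1000$; hence the hypothesis $n\ge 1000>d_0$ forces $A$ into one of the $24$ infinite families. From this point on the argument is an inspection of the list.

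Granting the list, the statement that $A$ is a union of at most three lines is read off directly: in each family the quiver is drawn as one long full convex subquiver of type $A_d$ carrying almost all of the vertices, together with at most two short arms of bounded length, the relations occurring only in a bounded region near the junctions. Writing $A=L_1\cup L_2\cup L_3$ with $L_1$ the longest line, one has $|L_1|\ge n(A)-c$ for a constant $c$ independent of the family.

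For the first assertion I would combine this with the explicit dimension vectors of the sincere indecomposables $U$ in these families. Across all $24$ families the entries of $\dim U$ along the long line $L_1$ are bounded by $6$, and near the two ends of $L_1$ they are strictly smaller; the crucial point to check is that this boundary deficit of $\dim U|_{L_1}$ below $6\,|L_1|$ is, family by family, at least as large as the total contribution of $U$ on the bounded complement of $L_1$. Then
\[ n=\dim U=\sum_{x\in A}\dim U(x)\le 6\,|L_1|, \]
whence $|L_1|\ge n/6$. The constant $6$ is crude — it is just a safe bound for the entries of the sincere dimension vector along the long line of each family — and the threshold $1000$ serves only to discard the exceptional algebras, for which the inequality $n\le 6\,|L_1|$ may well fail since there $n(A)\le 13$.

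The real obstacle is that the proof is not self-contained: it rests entirely on having the classification of the large ssc algebras available, and producing that classification (the content of \cite{Treue} together with the unpublished machine check of the exceptional cases) is a substantial undertaking in its own right. Once the list is in front of you, the only part needing genuine care is the uniform verification across the $24$ families that (a) the quiver really is covered by at most three lines and (b) the entries of the sincere dimension vector along the long line, together with the boundary deficit, stay in the range that yields the constant $6$; both are then routine.
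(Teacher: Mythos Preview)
Your proposal is correct and matches the paper's approach exactly: the paper offers no proof beyond the remark that the theorem ``can be verified by a look at the list'' of large ssc algebras from \cite{Treue} (see also \cite[section~10.6]{Buch}), and your sketch fleshes out precisely how that inspection goes. One small correction: the bound $n(A)\le 13$ for the exceptional algebras depends on the unpublished computer verification described just before the theorem, whereas \cite{Treue} alone gives only $n(A)\le 72$; either bound suffices for your argument, since the dimensions of the sincere indecomposables over those finitely many exceptional algebras still fall well below $1000$.
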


  The simple method to obtain the 24 families is  the inductive procedure to construct 
ssc algebras via one-point extensions as described in \cite[section 6]{Coverings}. The results \cite{Nazarova,Kleiner,Kleiner2} of Nazarova, Roiter and Kleiner on 
representations of partially ordered sets play an essential role and the proof is a 
nice interplay between Auslander-Reiten theory and results of the Kiev-school similar as in \cite{Ringel}.

The 24 families as well as the bound $13$ were later verified by Ringel in the last chapter of the book \cite{Tame} by a different method and without computer. This method  was also used by 
 Dräxler to confirm my
 numerical 
results about the exceptional algebras in \cite{Drxler}. Finally two of my students produced pictures showing the zoo of exceptional algebras in \cite{RogatTesche}.

Dräxler used his numerical results to derive the following interesting fact \cite{Nor}:
\begin{theorem}\label{Nulleins}
 Any indecomposable over a ssc algebra admits a basis such that all arrows are represented by matrices with only $0$ and $1$ as entries.
\end{theorem}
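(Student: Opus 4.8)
The plan is to reduce the statement to sincere indecomposables over ssc algebras and then to exploit the explicit classification that underlies Theorem~\ref{ssc}. First I would observe that if $U$ is an indecomposable over an ssc algebra and $B$ is its support, then $B$ is convex, hence again representation-finite with simply connected Auslander--Reiten quiver by \cite{quadratic}, and $U$ is a sincere $B$-module. A basis of $U$ in which every arrow of $Q_{B}$ acts by a $\{0,1\}$-matrix extends by zero to such a basis over the original algebra, since all arrows outside $B$ act by zero on $U$. So it suffices to produce, for every ssc algebra $A$ and every sincere indecomposable $U$ over $A$, a basis of $U$ with all structure matrices in $\{0,1\}$.

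Next I would split according to $n=\dim U$. For $n<1000$ only finitely many algebras can occur: either $A$ is one of the exceptional ssc algebras, which all satisfy $n(A)\leq 13$, or $A$ is a member with bounded parameter of one of the $24$ families (within a fixed family the sincere indecomposable has dimension growing with the parameter, so $\dim U<1000$ bounds it). In each of these finitely many cases the algebra together with its sincere indecomposables is completely and explicitly known --- this is exactly what the computations of \cite{Treue}, \cite{Drxler} and the pictures of \cite{RogatTesche} provide --- and one simply reads a $\{0,1\}$-basis off the displayed modules. For $n\geq 1000$, Theorem~\ref{ssc} tells us that $A$ is the union of at most three lines $L_{1}\cup L_{2}\cup L_{3}$, and this is the case I would treat structurally rather than by inspection.

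For a union of at most three lines I would argue as follows. Restricted to a single line $L_{i}$ (a convex $A_{d}$ without relations) every module is a direct sum of interval modules, each carrying the obvious $\{0,1\}$-basis in which the irreducible maps are identities or zero. The three lines interact only at the finitely many branch points where two or three of them meet. I would fix interval bases on each line and then show that at each branch point the several local bases can be reconciled into one global $\{0,1\}$-basis; because $U$ is indecomposable and the combinatorics of a three-line quiver is so restricted, the change of basis needed at a branch point is itself expressible by $\{0,1\}$-matrices --- one can, for instance, run Auslander--Reiten knitting on the small neighbourhood of the branch point, or appeal directly to the normal form of the glued indecomposable. Performing this reconciliation along all of $A$ produces the required basis.

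The main obstacle is precisely this reconciliation at the branch points: a naive choice of interval bases on the legs of the quiver need not be compatible, and one must prove that a compatible choice with entries in $\{0,1\}$ always exists. This is the only genuinely representation-theoretic step; everything else is either the reduction above or a finite verification. It should be stressed, following the author's own remark, that no conceptual proof is known that avoids the classification of the large ssc algebras, so at the present state of the art the case analysis of the $24$ families (for which, alternatively, the explicit matrix recipes of \cite{Treue} and \cite{Tame} already exhibit $\{0,1\}$-forms uniformly in the parameter) and of the exceptional algebras is an unavoidable ingredient; the real work lies in organising it so that each case is genuinely checkable.
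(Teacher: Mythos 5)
The paper contains no proof of this statement: it is attributed to Dräxler \cite{Nor}, who obtained it from his computer classifications, and the survey simply cites it. So there is no internal argument to compare against, only a plausibility check against the surrounding theory.

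Your reduction to sincere indecomposables via convexity of supports is correct, and your overall plan---exhaust the classification of ssc algebras---is the right framework. But the structural treatment of the case $n\geq 1000$ has a genuine gap, which you flag but should not wave away. The coarse information from Theorem~\ref{ssc} that $A$ is a union of at most three lines does not supply an effective normal form for $U$: the three lines can overlap along long segments, not merely meet at isolated ``branch points''; restricting $U$ to each line yields a direct sum of interval modules, but nothing forces the three interval decompositions to be compatible on the overlaps; and the key assertion---that the change of basis needed to make them compatible can itself be taken with entries in $\{0,1\}$---is precisely the content of the theorem, stated rather than proved. One cannot recover the result from this crude three-line picture alone. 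The viable route is the one you mention only in your final sentence: the $24$ families of \cite{Treue} come with explicit matrix descriptions of their sincere indecomposables (see also \cite{Tame}) that are already in $\{0,1\}$-form uniformly in the parameter, and the finitely many exceptional ssc algebras (all with $n(A)\leq 13$) are handled by the computer classification of \cite{Drxler,RogatTesche}. That is essentially Dräxler's argument; with the ``three lines'' detour removed, your proposal is a fair reconstruction of it.
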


We also need the next result from \cite{Extensions} which is proven by elementary algebraic geometry. 
\begin{theorem}\label{extfin}
 Any indecomposable over a ssc algebra is accessible.
\end{theorem}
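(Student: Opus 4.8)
\emph{Proof proposal.} The plan is an induction on $n=\dim U$. For $n=1$ the module $U$ is simple, hence accessible by definition, so assume $n\geq 2$. Let $A$ be the support of $U$; as recalled in \ref{ssc} and the surrounding discussion $A$ is then a convex subcategory, hence again ssc, and $U$ is a sincere $A$-module. Since a codimension-$1$ submodule or quotient of $U$ is again an $A$-module whose support is convex in $A$, hence ssc, it suffices, by the inductive hypothesis, to produce an \emph{indecomposable} submodule or quotient of $U$ of dimension $n-1$. So everything reduces to the following claim: an indecomposable $U$ with $\dim U\geq 2$ over an ssc algebra has an indecomposable submodule of codimension $1$ or an indecomposable quotient of codimension $1$.

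This claim is where elementary algebraic geometry should come in. The submodules $V\subseteq U$ with $\dim V=n-1$ are exactly the kernels of the surjections $U\to S_i$ onto simple modules; for a fixed $i$ they form the projective space of hyperplanes in the $S_i$-isotypic component of $\mathrm{top}\,U$, that is, a $\mathbb{P}^{m_i-1}$ where $m_i$ is the multiplicity of $S_i$ in $\mathrm{top}\,U$. Dually the quotients $U/S$ with $\dim S=1$ form, for a fixed socle type $S_j$, a projective space $\mathbb{P}^{n_j-1}$. Over each of these irreducible bases one has a flat family of $A$-modules of constant dimension vector; since $A$ is representation-finite the family has a well-defined generic member, and decomposability is a closed condition on the base (via upper semicontinuity of $\dim\mathrm{End}$, or the existence of a nontrivial idempotent endomorphism). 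Hence on each such component either every member decomposes, or the generic member is indecomposable and we are done. It therefore suffices to rule out the ``bad case'' in which \emph{every} codimension-$1$ submodule \emph{and} every codimension-$1$ quotient of $U$ decomposes.

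To exclude the bad case one exploits the structure theory of ssc algebras. Pick a generic codimension-$1$ submodule $V=V_1\oplus\cdots\oplus V_k$ with $k\geq 2$ and $U/V\cong S_i$: since $U$ is indecomposable the class of $0\to V\to U\to S_i\to 0$ is non-zero in every summand $\mathrm{Ext}^1(S_i,V_\ell)$, and doing the same with the socle-quotients gives a very rigid description of how $U$ is glued from the (by induction accessible) modules $V_\ell$ and their duals. Now invoke \ref{ssc}: as soon as $n$ is large, $A$ is a union of at most three lines and contains a line on at least $n/6$ points, and by \ref{Nulleins} the module $U$ admits a $0$--$1$ basis. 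In this situation $U$ is essentially a tree (``caterpillar'') module, and removing the tip of the long line yields an indecomposable submodule or quotient of $U$ of dimension $n-1$ --- contradicting the bad case. The remaining modules have bounded dimension over an ssc algebra with few points and are handled by a finite inspection of the classification list of \cite{Treue} (equivalently \cite[section 10.6]{Buch}), which in particular covers the exceptional algebras with $n(A)\leq 13$.

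I expect the last step --- showing \emph{uniformly} that not all codimension-$1$ neighbours of an indecomposable can be decomposable --- to be the real obstacle. This is exactly the place where simple-connectedness is indispensable, since for general representation-finite algebras the conclusion is false; and making it work seems to require either the classification of large sincere indecomposables together with the combinatorics of caterpillar modules, or a delicate control of how idempotents can propagate along the projective family of codimension-$1$ submodules. A proof avoiding the classification would be much more satisfying, but I do not know one, and Bongartz's actual argument in \cite{Extensions} may well organise the geometry differently.
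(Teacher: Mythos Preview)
The survey does not prove this theorem; it only cites \cite{Extensions} with the remark that the argument uses ``elementary algebraic geometry''. So the comparison is between your proposal and the content of \cite{Extensions}.

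Your reduction is exactly right, and it is the heart of the matter: the inductive step comes down to showing that every non-simple indecomposable $U$ has an indecomposable submodule or quotient of codimension~$1$. That is precisely the statement of \cite{Extensions} (indeed for arbitrary representation-finite algebras, as its title announces). Your geometric set-up is also sound: over an ssc algebra every indecomposable has endomorphism ring $k$, so indecomposability coincides with $\dim\mathrm{End}=1$ and is an open condition on each irreducible family $\mathbf{P}(\mathrm{Hom}(U,S))$ of codimension-$1$ submodules; hence each such family is either entirely decomposable or generically indecomposable.

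The genuine gap is the one you flag yourself: ruling out the ``bad case''. Your route through Theorems~\ref{ssc} and~\ref{Nulleins} is neither the method of \cite{Extensions} nor a proof as written. Theorem~\ref{Nulleins} is Dr\"axler's 2001 result, seventeen years after \cite{Extensions}, so it cannot be what Bongartz used; Theorem~\ref{ssc} only bites for $n\geq 1000$, so even granting the ``caterpillar'' heuristic you would be left with an enormous uncompleted ``finite inspection'' below that bound; and the caterpillar step itself (``removing the tip of the long line yields an indecomposable submodule or quotient'') is asserted, not argued. The six-page argument in \cite{Extensions} is uniform and classification-free: it stays inside the geometry of the families $\mathbf{P}(\mathrm{Hom}(U,S))$ and their duals and extracts an indecomposable member by a direct dimension argument, never touching the list of ssc algebras or $0$--$1$ bases. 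Since the survey does not reproduce that argument, you should read \cite{Extensions} for the actual mechanism rather than trying to push the classification through.
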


\subsection{The critical algebras and tame concealed algebras}
A Schurian directed algebra $A=k\vec{A}$ with $H_{1}\vec{A}=0$ is called critical  if $A$ is not representation-finite, but any proper convex subalgebra  is. By theorem
 \ref{Trennung},
any critical algebra has a simply connected component $C$ in its Auslander-Reiten quiver containing all indecomposable projectives. It follows easily that $C$ is a full subquiver of ${\bf Z}T$ for some
 tree $T$. 
\begin{theorem}\label{Euclidean}
 The only trees occurring for critical algebras are of type $\tilde{D}_{n}$, $n\geq 4$, or $\tilde{E}_{n}$, $6 \leq n \leq 8$.
\end{theorem}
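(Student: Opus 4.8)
The plan is to combine the structural constraints already developed in the paper with the classical theory of representations of partially ordered sets, which is exactly the "Kiev-school" input the author invokes for the classification of ssc algebras. First I would invoke Theorem~\ref{Trennung}: since a critical algebra $A = k\vec{A}$ is Schurian, directed, with $H_1\vec{A}=0$, and minimal representation-infinite, its Auslander--Reiten quiver has a simply connected component $C$ containing all indecomposable projectives, and standard arguments (the mesh-category of a simply connected stable translation quiver embeds into $\mathbf{Z}T$ for a tree $T$) show that $C$ is a full subquiver of $\mathbf{Z}T$ for some tree $T$. The goal then reduces to ruling out every tree $T$ except the Euclidean diagrams $\tilde{D}_n$ ($n\ge 4$) and $\tilde{E}_6,\tilde{E}_7,\tilde{E}_8$.

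The key step is to exploit minimality. Because every proper convex subalgebra of $A$ is representation-finite, and because the support of a sincere indecomposable over a representation-finite algebra with simply connected AR-quiver is convex (the result behind the definition of ssc algebras), $A$ itself must be "just barely" representation-infinite. I would take a one-point extension description: write $A$ as built up from a representation-finite convex subalgebra $A'$ by adjoining a source point $p$ whose radical is a module over $A'$; minimality forces $A'$ to be representation-finite while $A$ is not, so the extension of $A'$ by that module is precisely at the borderline. This is governed by the representations of the partially ordered set associated to $A'$ at $p$: $A$ fails to be representation-finite exactly when that poset is one of Kleiner's/Nazarova's infinite-type critical posets, equivalently when a certain Tits quadratic form fails to be weakly positive but every proper restriction is. Translating this into the shape of the tree $T$, the borderline condition pins the underlying graph of $T$ down to an extended Dynkin diagram, and directedness plus Schurianness eliminate $\tilde{A}_n$ (which has an oriented cycle in any orientation forced by a non-distributive situation, or more precisely cannot arise as a directed Schurian critical algebra), leaving exactly $\tilde{D}_n$ and $\tilde{E}_{6,7,8}$.

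An alternative and perhaps cleaner route is via the Tits form directly: for a directed Schurian algebra with $H_1\vec{A}=0$ the Euler form coincides with the Tits quadratic form $q_A$ on $\mathbf{Z}^{Q_0}$, and representation-finiteness of all proper convex subalgebras together with representation-infiniteness of $A$ is equivalent (using that $A$ is simply connected, so Bongartz's criterion applies) to $q_A$ being critical: non-positive but with every proper restriction positive definite. The critical forms are classified — they are exactly the forms of extended Dynkin type — and since $A$ is directed and connected, the only ones realizable are $\tilde{D}_n$ and $\tilde{E}_n$, $6\le n\le 8$; the type $\tilde{A}_n$ is excluded because it would force a cycle in $Q_A$.

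The hard part will be the passage from "the underlying graph is extended Dynkin" to actually identifying $T$ and the precise family of critical algebras, i.e. showing that no orientation or relations other than the expected ones can occur and that $\tilde{A}_n$ is genuinely impossible for a \emph{directed} critical algebra — this is where one must really use directedness together with the poset/Tits-form input rather than just the translation-quiver combinatorics. I expect that step, together with verifying that all the claimed diagrams do occur (so the list is exact, not merely an upper bound), to be the main obstacle; it is precisely the content of the Nazarova--Roiter--Kleiner results and Bongartz's own analysis of critical algebras, which the author is entitled to cite.
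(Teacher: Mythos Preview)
Your first approach---building $A$ as a one-point extension of a representation-finite convex subalgebra and invoking the Nazarova--Roiter--Kleiner classification of critical posets at the extension vertex---is exactly the method the paper points to: the author says the proof (carried out in \cite{liste}) uses ``the quite technical method \ldots\ the same as in section 6.1'', and section 6.1 is precisely this inductive one-point-extension procedure with the Kiev poset results as input. So that half of your proposal matches the paper.

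Your alternative route via the Tits form is closer to the parallel Happel--Vossieck approach (relying on Ovsienko's theorem), which the paper explicitly distinguishes from its own. Two points in your sketch need correction. First, the tree $T$ in the statement is the tree such that the simply connected component $C$ embeds in $\mathbf{Z}T$; it is \emph{not} the underlying graph of $Q_A$, and for the non-hereditary critical algebras on the BHV-list the two differ. Hence even a complete analysis of $q_A$ does not immediately identify $T$; one still needs tilting or the explicit structure of the preprojective component to make that translation. Second, your exclusion of $\tilde{A}_n$ is misargued. Since $\tilde{A}_n$ is not a tree it can never appear as $T$, so there is nothing to exclude on that side; and on the quiver side, directedness alone does \emph{not} rule out an underlying graph of type $\tilde{A}_n$ (acyclic orientations exist)---what actually eliminates that case is the hypothesis $H_1\vec{A}=0$, which fails whenever the underlying graph of $Q_A$ contains a cycle. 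Finally, the assertion that the minimally non-weakly-positive unit forms are exactly of extended Dynkin type is not the elementary classification of critical \emph{graphs}; for Tits forms of algebras with relations this is a genuine theorem (Ovsienko, later von H\"ohne), and the paper itself remarks that no simple argument for this step is known.
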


I announced this theorem at Luminy in 1982 and the proof as well as the possible algebras for $\tilde{D_{n}}$ appeared in \cite{liste}. The quite technical method for the proof is the 
same as in section 6.1. In 1983 I also determined by computer the critical algebras for the types $\tilde{E}_{n}$, $n=6,7,8$
 using my  previously obtained results for the ssc algebras.
In particular I never used tilting theory as claimed in \cite[page 247]{Skowronskisimson}

In parallel work \cite{Listebild} Happel and Vossieck studied the representation-infinite algebras $B$ having a preprojective component and such that the quotient $B/BeB$ is representation-finite for each 
non-zero idempotent $e$. Using a theorem of Ovsienko \cite{Ovsienko} they derived the analogue of theorem \ref{Euclidean},  namely 
that  with the exception of  a generalized Kronecker-algebra $B$ is the endomorphism algebra of a preprojective tilting
 module over 
an extended Dynkin-quiver. Furthermore they classified 
all these so-called tame concealed algebras with the help of a computer by certain frames which give a complete description by quiver and relations. For types different from  $\tilde{A}_{n}$ 
the algebras studied by 
Happel and Vossieck are obviously critical and the two classes even coincide just because the numbers of isomorphism classes coincide. In fact, this is also clear for theoretical reasons 
if one knows that for critical algebras
 only extended Dynkin-quivers occur. As far as I know  there is no simple proof for that even though von Höhne has verified this by hand in \cite{vonHhne}. We state for later use:

\begin{theorem}\label{crit}
 The critical algebras are given by the frames of Happel and Vossieck.
\end{theorem}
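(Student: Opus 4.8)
The plan is to prove the statement as a double inclusion between the class $\mathcal{C}$ of critical algebras and the class $\mathcal{H}$ of those Happel--Vossieck algebras whose type is different from $\tilde{A}_{n}$, i.e. the tame concealed algebras that are endomorphism rings of preprojective tilting modules over an extended Dynkin quiver of type $\tilde{D}_{n}$ with $n\geq 4$ or $\tilde{E}_{n}$ with $6\leq n\leq 8$. These are exactly the frames of Happel and Vossieck that can possibly give a critical algebra, the remaining ones ($\tilde{A}_{n}$ and the generalized Kronecker algebra) failing to be Schurian and directed with $H_{1}=0$.

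First I would check the easy inclusion $\mathcal{H}\subseteq\mathcal{C}$. Let $B$ be such an algebra, described by one of the frames. From the frame one reads off directly that $B=k\vec{B}$ is Schurian and directed, and that $H_{1}\vec{B}=0$; the latter can be seen by inspecting the relations, or by observing that these algebras are simply connected (Hurewicz), or that each point is separating and invoking theorem~\ref{separation}. By construction $B$ is representation-infinite. Finally, let $C$ be a proper convex subalgebra, supported on a proper subset $S$ of the points of $Q_{B}$, and put $e=\sum_{x\notin S}e_{x}\neq 0$. Convexity of $C$ forces every path in $Q_{B}$ between two points of $S$ to stay inside $S$, so $BeB$ meets no $B(x,y)$ with $x,y\in S$, whence $C\simeq B/BeB$. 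The latter is representation-finite by the defining property of the Happel--Vossieck class, so $C$ is, and $B$ is critical.

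For the reverse inclusion $\mathcal{C}\subseteq\mathcal{H}$ I would start from a critical algebra $A$. By theorem~\ref{Trennung} its Auslander--Reiten quiver contains a simply connected component that is a full subquiver of $\Z T$ for some tree $T$, and by theorem~\ref{Euclidean} the tree $T$ is of type $\tilde{D}_{n}$ ($n\geq 4$) or $\tilde{E}_{n}$ ($6\leq n\leq 8$). Thus the critical algebras split, according to the type of $T$, into finitely many finite families, one per Euclidean type. From here there are two ways to conclude. The numerical one: for the types $\tilde{D}_{n}$ the critical algebras were listed in \cite{liste} and for $\tilde{E}_{6},\tilde{E}_{7},\tilde{E}_{8}$ by the computer calculation reported above, while Happel and Vossieck classified their frames of exactly these types; since the numbers of isomorphism classes agree type by type and we already have $\mathcal{H}\subseteq\mathcal{C}$, the two classes must coincide. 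The conceptual one: a critical algebra of extended Dynkin type which, again by theorem~\ref{Trennung}, has a preprojective component is necessarily tame concealed --- the defect on $\Z\tilde{D}_{n}$ resp. $\Z\tilde{E}_{n}$ together with the minimality of $A$ pins it down as the endomorphism algebra of a preprojective tilting module over the corresponding hereditary algebra, which is precisely the Happel--Vossieck normal form.

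The main obstacle is this reverse inclusion, and within it the single substantial point is theorem~\ref{Euclidean}: excluding all trees other than $\tilde{D}_{n}$ and $\tilde{E}_{n}$ for critical algebras is the hard and quite technical step, carried out by the inductive classification method of section~6.1 and also by hand by von H\"{o}hne in \cite{vonHhne}. Once the Euclidean restriction on the type is granted, no genuinely new representation-theoretic difficulty remains: the remaining argument is either the bookkeeping of two matching classifications or a routine application of tilting and defect arguments.
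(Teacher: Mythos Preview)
Your proposal is correct and follows exactly the argument the paper sketches in the paragraph preceding the theorem: one inclusion is ``obvious'' (you spell out why, via the identification of proper convex subcategories with quotients $B/BeB$ and the direct verification that the non-$\tilde{A}_n$ frames are Schurian, directed, and have $H_1=0$), and for the other you invoke theorem~\ref{Euclidean} and then offer both the numerical matching of isomorphism classes and the conceptual tilting/defect argument, which are precisely the two routes the paper names. Your identification of theorem~\ref{Euclidean} as the only substantial ingredient is also the paper's assessment.
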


We do not reproduce the frames that can be found in the original articles and  many other places e.g. in \cite{Buch}. We will refer to the list as the BHV-list.

Besides theorem \ref{Nulleins} this list is the only result in this survey that depends on computer calculations. We leave it up to the reader  whether this makes it trustworthy or not. The only place where
 the correctness of this list matters is
in the finiteness criterion given later on. In the other 'applications' e.g. in the proofs of section 3 one uses only the fact that some very special members of the list like 
certain tree-algebras or fully commutative quivers 
are not representation-finite. This can always be checked easily and it was known since a long time.

Later on we  also use the following non-trivial fact from \cite{Degenerations} which is again proven with geometric methods like degenerations of modules.

\begin{theorem}\label{tameext}
 Over a tame concealed algebra any indecomposable is accessible.
\end{theorem}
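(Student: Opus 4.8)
The plan is to prove the statement by induction on $d=\dim M$, where $M$ is an indecomposable module over the tame concealed algebra $A$. The case $d=1$ is trivial, every simple module being accessible by definition, and since $A$ is basic over an algebraically closed field every simple module is one--dimensional. Hence every module $M$ with $\dim M\ge 2$ has plenty of codimension--one submodules (the kernels of the surjections of $M$ onto a simple summand of $\operatorname{top}M$) and plenty of codimension--one quotients (the cokernels of the inclusions of a simple submodule of $\operatorname{soc}M$). The whole content of the induction step is therefore to exhibit \emph{one} such submodule or quotient $N$ that is again indecomposable: then $N$ is accessible by the inductive hypothesis, or, once $N$ has become non--sincere and its support $B$ is a representation--finite convex subcategory of $A$ (necessarily a tilted algebra of Dynkin type, hence ssc), by theorem \ref{extfin}.

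Since the class of tame concealed algebras is stable under $A\mapsto A^{\mathrm{op}}$ and the standard duality exchanges preprojectives with preinjectives, submodules with quotients, and preserves indecomposability, dimension and accessibility, we may assume that $M$ is preprojective or regular, and we are free to look for a submodule or a quotient at each step. If $M$ is regular, say of regular length $\ell$ in a tube of rank $p$, I would \emph{not} peel along the regular composition series, because the regular simples occurring there need not be one--dimensional; instead one checks --- using the explicit shape of the indecomposables in a tube together with the description of the maps from a tube into the preprojective and preinjective components --- that a suitable codimension--one submodule or quotient of $M$ is again indecomposable, and typically preprojective or preinjective of dimension $d-1$. Already over the Kronecker algebra a generic maximal submodule of the regular module of dimension vector $(n,n)$ is the preprojective module of dimension vector $(n-1,n)$, so that the regular case largely reduces to the directed one.

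For the preprojective case one argues with the position of $M$ in the directed preprojective component and with the geometry of the relevant module varieties. The subtlety --- visible already for the Kronecker algebra --- is that for a preprojective module a \emph{generic} codimension--one submodule or quotient may very well decompose (a generic maximal submodule of $P_{n-1}$ has a dimension vector carrying no indecomposable at all, and a generic codimension--one quotient of $P_{n-1}$ splits into a sum of regular modules); one has to choose the peel carefully, the correct choice being dictated by the linear algebra of the corresponding pencil, equivalently by the mesh structure near $M$. That such a choice produces an indecomposable of dimension $d-1$ is proved by the geometric methods of \cite{Degenerations}: from the short exact sequences $0\to S_j\to M\to M/S_j\to 0$ and their duals $M$ is a degeneration of a decomposable module with a one--dimensional summand, and a comparison of the dimensions of the corresponding $\operatorname{GL}$--orbits, together with the fact that a preprojective module over a tame concealed algebra is directing, forces one of these peels to remain indecomposable unless $M$ belongs to a short, explicit list of small exceptions --- and these are settled by hand against the BHV--list of theorem \ref{crit}. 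Iterating, the peel zig--zags between the preprojective, regular and preinjective parts of $A$-mod, dropping the dimension by one at each step, until it reaches a simple module or lands on an indecomposable over an ssc convex subcategory, where theorem \ref{extfin} takes over.

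The step I expect to be the main obstacle is exactly ``the carefully chosen codimension--one sub or quotient is indecomposable''. It is false for arbitrary and even for generic choices, so the required genericity (or speciality) statement genuinely needs the orbit--dimension estimates and degeneration results of \cite{Degenerations}; and one cannot avoid invoking the explicit classification of tame concealed algebras, both to dispose of the finitely many configurations in which the dimension count is too tight and to guarantee that every non--sincere module arising along the way lives over an ssc or again a tame concealed convex subcategory, so that the induction closes. The tube case is the most delicate, since there the combinatorics of the tube and the way a maximal submodule of a regular module passes into a preprojective (or preinjective) component must be controlled explicitly.
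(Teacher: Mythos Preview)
The paper does not actually prove this theorem: it merely cites \cite{Degenerations} and remarks that the result is ``proven with geometric methods like degenerations of modules.'' There is therefore no detailed argument in the paper to compare your proposal against.

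Your outline is in the right spirit --- induction on the dimension, reduction to the existence of an indecomposable codimension-one submodule or quotient, and an appeal to the orbit-dimension and degeneration techniques of \cite{Degenerations} for the crucial step --- and this is consistent with what the paper hints at. But what you have written is a plan, not a proof: you yourself flag the key step (``the carefully chosen codimension-one sub or quotient is indecomposable'') as the main obstacle and do not carry it out. Several subsidiary claims are also left vague. The assertion that a codimension-one submodule or quotient of a regular module is ``typically preprojective or preinjective'' is not correct without further qualification (in an exceptional tube most such peels stay regular, possibly decomposable); the ``short, explicit list of small exceptions'' is never produced; and the appeal to theorem \ref{extfin} once the module becomes non-sincere presupposes that every proper convex subcategory of a tame concealed algebra is an ssc algebra, which is true but should be stated and justified. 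To turn this into an actual proof you would have to extract from \cite{Degenerations} the precise degeneration criterion that forces at least one short exact sequence $0\to S\to M\to N\to 0$ or $0\to N'\to M\to S\to 0$ with $S$ simple to have $N$ (respectively $N'$) indecomposable, verify it for the preprojective and preinjective modules, and then give a separate, explicit argument for the regular modules in the tubes.
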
.

\section{Some applications}
\subsection{On indecomposables over representation-finite algebras}
 By theorem \ref{faithful} any representation-finite  algebra with a  faithful indecomposable $U$ is standard. Let $P=\vec{A}$ be the ray-category and $\pi:\tilde{P} \rightarrow P$ 
the universal covering
 inducing the Galois covering
$F:k\tilde{P} \rightarrow kP\simeq A$.
 By theorem \ref{Galois} there is a $k\tilde{P}$-module $\tilde{U}$ with $F_{\bullet}\tilde{U}\simeq U$. 
Thus there are up to Galois covering only finitely many ways to construct an indecomposable over a representation-finite algebra. In particular we obtain from theorems \ref{extfin} and \ref{Nulleins}:
\begin{theorem}\label{Strucind}
 Any indecomposable over a representation-finite algebra is accessible and it admits a basis such that all arrows are represented by matrices containing only $0$ and $1$ as entries.
\end{theorem}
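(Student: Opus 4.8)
The plan is to reduce the general representation-finite case to the two classification results already available: Theorem \ref{extfin} (any indecomposable over an ssc algebra is accessible) and Theorem \ref{Nulleins} (any indecomposable over an ssc algebra admits a $0$-$1$ basis). First I would invoke Theorem \ref{faithful}: if $A$ is representation-finite and $U$ is an indecomposable $A$-module, then the support $B$ of $U$ is a convex subcategory, $U$ is faithful over $B$, and hence $B$ is standard, say $B = kP$ with $P = \vec{B}$ the ray-category. By Theorem \ref{Fischbong} (or \ref{Fischbacher}, since $P$ is even representation-finite here) the fundamental group $\Pi_P$ is free and the universal covering $\tilde{P}$ is interval-finite, so $k\tilde{P}$ is locally representation-finite and $\pi : \tilde{P} \to P$ induces a Galois covering $F : k\tilde{P} \to kP \simeq B$. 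By Theorem \ref{Galois}(a) the pushdown $F_{\bullet}$ preserves dimension and indecomposability, and by Theorem \ref{Galois}(b) it induces a Galois covering $\mathrm{ind}\,k\tilde{P} \to \mathrm{ind}\,B$; so there is an indecomposable $\tilde{U}$ over $k\tilde{P}$ with $F_{\bullet}\tilde{U} \simeq U$, and $\dim \tilde{U} = \dim U$.

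Next I would pass to a finite convex piece. The support of $\tilde{U}$ is a finite convex subcategory $C$ of $k\tilde{P}$; since $\tilde{P}$ is interval-finite its quiver is directed, so $C$ is a directed Schurian (hence simply connected) algebra, and by Theorem \ref{GQARQ}(a) together with Theorem \ref{separation} every point of $C$ is separating and $\Gamma_C$ is simply connected. Thus $C$ is representation-finite with simply connected Auslander-Reiten quiver and $\tilde{U}$ is a sincere indecomposable $C$-module — in other words $C$ is an ssc algebra in the terminology of section 6.1. Now Theorem \ref{extfin} applies: $\tilde{U}$ is accessible as a $C$-module, and since $C$ is convex in $k\tilde{P}$ its modules are $k\tilde{P}$-modules and accessibility is inherited, so $\tilde{U}$ is accessible over $k\tilde{P}$. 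Likewise Theorem \ref{Nulleins} gives a basis of $\tilde{U}$ in which every arrow of $C$ — hence every arrow of $k\tilde{P}$ acting non-trivially on $\tilde{U}$ — is a $0$-$1$ matrix.

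It remains to transport both properties down along $F_{\bullet}$. For the $0$-$1$ basis this is immediate: $F_{\bullet}$ is defined by taking direct sums of the vector spaces $\tilde{U}(x')$ over the fibre $F^{-1}(x)$ and direct sums of the corresponding linear maps, so a $0$-$1$ basis of $\tilde{U}$ assembles into a $0$-$1$ basis of $U = F_{\bullet}\tilde{U}$ in which every arrow of $Q_A$ is represented by a block matrix each of whose blocks is a $0$-$1$ matrix (most blocks zero), hence again a $0$-$1$ matrix. For accessibility I would argue that $F_{\bullet}$, being exact (Theorem \ref{Galois}(a)), sends a submodule (resp. quotient) of $\tilde{U}$ of codimension one to a submodule (resp. quotient) of $U$ of codimension one, and preserves indecomposability, so an accessibility chain $\tilde{U} = \tilde{U}_d \supset \cdots$ pushes down to an accessibility chain for $U$; the base case is that simples push down to simples (a simple $k\tilde{P}$-module sits over a single point and $F$ restricts to an isomorphism on endomorphism rings of objects, so $F_{\bullet}$ of a simple is simple). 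Running the induction on dimension then yields that $U$ is accessible.

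The main obstacle is verifying the compatibility of the pushdown with the two inductive structures — that $F_{\bullet}$ really does carry an accessibility filtration to an accessibility filtration, which needs exactness plus the fact that $F_{\bullet}$ of an indecomposable is indecomposable and of a simple is simple, and the matching of the $0$-$1$ property through the block decomposition coming from the fibres of $F$. Everything else is a bookkeeping reduction: convex support, standardness via Theorem \ref{faithful}, freeness of $\Pi_P$ and interval-finiteness via Theorem \ref{Fischbong}, and the identification of finite convex subcategories of $k\tilde{P}$ as ssc algebras via Theorems \ref{GQARQ} and \ref{separation}, at which point Theorems \ref{extfin} and \ref{Nulleins} finish the job upstairs.
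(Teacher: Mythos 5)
Your proposal takes essentially the same route as the paper: reduce to a faithful indecomposable over a standard algebra via Theorem \ref{faithful}, lift along the Galois covering of Theorem \ref{Galois} to the simply connected universal cover, recognize the finite convex support there as an ssc algebra, apply Theorems \ref{extfin} and \ref{Nulleins}, and push the accessibility chain and the $0$-$1$ basis back down through $F_{\bullet}$. The paper's own proof is only a few terse lines; you have unpacked the same chain of reductions in full, and the pushdown arguments (simples to simples, codimension-one monos and epis preserved, block-diagonal $0$-$1$ matrices) are exactly what is needed.

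One small repair: in the opening reduction you pass to the support $B$ of $U$ and assert that $U$ is faithful over $B$. The support construction only makes $U$ sincere over $B$, not faithful (for instance the unique simple over $k[T]/(T^2)$ is sincere but not faithful), and Theorem \ref{faithful} really does require faithfulness. The correct move, which the paper leaves implicit, is to replace $A$ by the quotient $A/\operatorname{ann}(U)$; this is still representation-finite, $U$ is genuinely faithful over it, and an accessibility chain or a $0$-$1$ basis for $U$ over the quotient is inherited verbatim by $A$ since the arrows killed in the quotient simply act as zero. This adjustment does not disturb anything downstream in your argument.
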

 For further applications the following facts are useful. Here an infinite line $L$ in the universal covering $\tilde{P}$ of a ray-category is called periodic if the stabilizer $G_{L}$ 
 within the fundamental group is not trivial. The periodic length of $L$ is then the number of $G_{L}$-orbits on $L$ ( see \cite{Dowbor} ).
\begin{lemma}\label{line}
 Let $\pi:\tilde{P} \rightarrow P$ be the universal covering of a  ray-category with $d$ elements such that  $\tilde{P}$ is interval-finite and weakly zigzag-finite.

\begin{enumerate}
 \item Let  $ q \rightarrow z_{1} \ldots  \rightarrow z_{t-1} \rightarrow z_{t} \leftarrow \ldots  x' \leftarrow q'$ be a line $L$ of length $l$ in $\tilde{P}$ with $\pi(z_{1})\neq \pi(x')$ and 
$\pi(q)=\pi(q')$ and such that $v=q \rightarrow z_{1} \ldots  \rightarrow z_{t-1} \rightarrow z_{t}$ is a path. Let $g$ be the element in the fundamental group  with $gq=q'$. 
Then  adding on the 
right end the  path $gv$ one gets another convex subcategory in $\tilde{P}$. Continuing that way in both directions one obtains an infinite periodic line $L'$ of periodic length $l-1$ at most.
\item Any line of length $2d$ contains a 'subline' $L$ that can be prolonged to an infinite periodic line of length at most $2d$.
\item If $P$ is minimal representation-infinite and $\tilde{P}$ contains an infinite periodic line then $P$ is a zero-relation algebra.
\end{enumerate}
\end{lemma}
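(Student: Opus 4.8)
The plan is to build the infinite periodic line in (a) by iterating the deck transformation $g$, to derive (b) from a pigeonhole count that feeds (a), and to deduce (c) from the fact, recorded in Theorem~\ref{mininf}, that a minimal representation-infinite ray-category has no non-deep contour. For (a): since $\Pi_P$ acts freely on $\tilde P$ and simply transitively on every fibre of $\pi$, there is a unique $g\in\Pi_P$ with $gq=q'$, and $g\neq1$ because $q\neq q'$. Setting $L'=\bigcup_{n\in\mathbf Z}g^{n}L$ realises the iteration in the statement: one first glues $gv$ onto the right end $q'$ of $L$, then the mirror arm $g(z_t\leftarrow\dots\leftarrow q')$ to complete $gL$, and repeats on both sides. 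The only coincidence that could spoil being an $A$-line occurs at a junction $g^{n}q'=g^{n+1}q$, where the two incident edges are $g^{n}q'\to g^{n}x'$ and $g^{n}q'\to g^{n+1}z_1$, and these are distinct precisely because $\pi(z_1)\neq\pi(x')$ forces $x'\neq gz_1$. That distinct $g^{n}L$ and $g^{m}L$ overlap only in the obvious junction vertices, and that $g^{N}L\neq L$ for $N\neq0$, follows from $\Pi_P$ being torsion-free, once one takes $L$ primitive in the sense that $\pi$ identifies none of its vertices beyond $q$ and $q'$ — which costs nothing and is automatic in the application to (b). So $L'$ is a genuine infinite $A_\infty$-line, $g$-periodic, and a fundamental domain for $\langle g\rangle$ on $L'$ is $L$ with one of its two endpoints removed ($l-1$ vertices); hence its periodic length is at most $l-1$, and smaller if the stabiliser of $L'$ properly contains $\langle g\rangle$.

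What genuinely needs proof in (a) is that $L'$ is \emph{convex} in $\tilde P$, not merely a line-shaped subquiver, and this is where the hypotheses ``interval-finite'' and ``weakly zigzag-finite'' are spent. Each $g^{n}L$ is convex, being an automorphic image of the convex $L$. Given a path $\gamma$ in $\tilde P$ from $a\in L'$ to $b\in L'$ through a vertex $c$, I would take $\gamma$ of minimal length and exploit that $L$ is a \emph{mountain} — one sink $z_t$, both ends sources. Going forward from $a$ inside the piece $g^{n}L$ one is driven to the peak $g^{n}z_t$; an escape of $\gamma$ from $L'$ would thus have to furnish a path from some peak $g^{n}z_t$ to a vertex of $L'$ outside $g^{n}L$, and concatenating it with the two arms of $L'$ produces either an oriented cycle, which $Q_{\tilde P}$ being directed forbids, or an infinite low zigzag, which weak zigzag-finiteness forbids. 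I expect this trapping-between-peaks argument to be the technical heart of the lemma.

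For (b): a line of length $2d$ has $2d$ vertices, all sent by $\pi$ into the $d$-element $P$, so two of them, say $m_i$ and $m_j$ with $i<j$ and $j-i$ minimal, have the same image; then $\pi$ is injective on $\{m_i,\dots,m_{j-1}\}$, and in particular $j-i\geq2$ by local injectivity of the covering and $\pi(m_{i+1})\neq\pi(m_{j-1})$ once $j-i\geq3$. A little combinatorial bookkeeping — possibly passing to a sub-mountain of the subline $L=(m_i,\dots,m_j)$, or dually to a sub-valley (the monotone case being the degenerate version of (a)) — brings $L$ into the form demanded by (a), which then prolongs it to an infinite periodic line of periodic length at most $j-i\leq d<2d$. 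The one genuinely fiddly point is this extraction: ensuring the piece one keeps has exactly one interior extremum and equal $\pi$-images at its two ends.

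For (c): by Theorem~\ref{mininf} the only essential contours a minimal representation-infinite ray-category can carry are deep, so if $P$ is not a zero-relation algebra it has a deep essential contour $(v,w)$. Suppose, towards a contradiction, that $\tilde P$ also contains an infinite periodic line $L'$, with nontrivial stabiliser $\langle g\rangle$. Being convex, infinite and periodic, $L'$ has infinitely many sources and sinks, so $\pi(L')$ is a closed periodic reduced walk in $P$ and tracing it exhibits an infinite low zigzag in $P$; via the covering one transports a lift of the deep contour into this picture. Applying the neighbourhood analysis of deep contours in minimal representation-infinite ray-categories — the deep-contour counterpart of Theorem~\ref{MIPF} — the periodic line is forced to enter the contour region infinitely often, so the relation $\vec v=\vec w$ would collapse an edge of $L'$, contradicting that $L'$ is a relation-free $A_\infty$-line. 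Hence $P$ has no essential contour at all, i.e. it is a zero-relation algebra. This step leans the hardest on the structure theory already summarised, and it is where a cleaner argument would be most welcome.
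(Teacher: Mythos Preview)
Your part (c) has a genuine gap: there is no ``deep-contour counterpart of Theorem~\ref{MIPF}'' in the paper or elsewhere, and nothing you wrote explains why a periodic line and a deep essential contour cannot coexist. The paper's argument is completely different and much shorter. Fix a source $q$ on the periodic line and let $L_n$ be the convex subline of length $n$ to its right; each $L_n$ carries a sincere indecomposable $U_n$ of dimension $n$, and the pushdowns $F_\bullet U_n$ are indecomposable $kP$-modules. If $(v,w)$ is an essential contour in $P$ and the action of $v$ on $F_\bullet U_n$ were nonzero, some lift $\tilde v$ would be a path inside $L_n$; convexity of $L_n$ then forces the lift $\tilde w$ (same endpoints) into $L_n$ as well, and a line has a unique directed path between two points, so $\tilde v=\tilde w$ and hence $v=w$, contradicting essentiality. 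Thus every contour value annihilates each $F_\bullet U_n$. Since $P$ is minimal representation-infinite, all but finitely many of the $F_\bullet U_n$ are faithful, and a faithful module annihilated by every contour value forces $P$ to have no essential contour at all, i.e.\ $P$ is a zero-relation algebra. You should replace your (c) with this.

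For (a) and (b) your outline is in the right spirit but the paper's route is cleaner. In (a) the paper does not argue convexity via your trapping-between-peaks scheme; instead it notes that by Theorem~\ref{separation} every point of $\tilde P$ is separating, and the convexity of $L\cup gv$ (and of each successive extension) is an immediate consequence of that, after which one dualises and iterates. In (b) the ``fiddly extraction'' you flag is a real obstacle when you pigeonhole on arbitrary vertices, and the paper sidesteps it entirely by a sharper count: look only at the \emph{sources} $q_1,\dots,q_r$ of the given line and observe that every vertex of the line receives a nonzero morphism along the line from some $q_j$, giving $2d\le\sum_j\dim kP(\pi(q_j),\,\cdot\,)$. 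This forces not two but \emph{three} sources over one point of $P$. Among three such sources at least one pair automatically satisfies $\pi(z_1)\neq\pi(x')$ --- because the two neighbours of the middle source have distinct images under the covering --- so (a) applies directly with no extraction needed.
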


\begin{proof}  a) That one gets a convex subcategory by adding $gv$ is an  easy consequence of the fact that all points of  $\tilde{P}$ are separating by theorem \ref{separation}. Then one dualizes and adds $gw$ on the right
 end where $w\neq v$ is the second subpath of $L$ ending in the sink $z_{t}$. This procedure goes on forever to the right side and after that one goes to the left. By construction
$g$ is in the stabilizer 
of the obtained line $L'$ and the periodic length is $d-1$ at most.

 b) Take a line $L$ of length $2d$ and mark all sources $q_{1},q_{2},\ldots  ,q_{r}$. Thus $L$ looks like
 $$    \ldots   \leftarrow q_{1} \rightarrow \ldots \leftarrow q_{i}\rightarrow  \ldots \leftarrow q_{r}\rightarrow \ldots$$ and we get $2d \leq \sum _{j=1}^{r} dim P(q_{j}, \, )$. So there
 are three sources  mapped onto the same point in $P$. After renumbering we can assume they are $q_{1},q_{i},q_{r}$. For two of them the situation of part a) occurs and we are done.

c) Let $L$ be a periodic line in $\tilde{P}$ containing the source $q$. For each natural number $n\geq 1$ let $L_{n}$ be the subline of $L$ of length $n$ starting in $q$ and going to the right side.
 Then there is an indecomposable 
representation $U_{n}$ of $\tilde{P}$ with support $L_{n}$ and dimension $n$. The pushdowns $F_{\bullet}U_{n}$  are all indecomposable and annihilated by all contours. Infinitely many of 
them are faithful because $P$ is minimal representation-infinite. Thus $P$ is defined by zero-relations.
\end{proof}

Using parts of the lemma, the detailed structure of the large ssc algebras and $k$-linear covering functors I proved in \cite{Treue}:

\begin{theorem}\label{bounds}
 Let $A$ be a basic representation-finite algebra of dimension d. Let $u(A)$ be the number of indecomposable $A$-modules and let $u(d)$ be the supremum of the $u(A)$ when $A$ runs
 through all representation-finite algebras of dimension $d$. Then we have:
\begin{enumerate}
 \item An indecomposable $A$-module has at most dimension $max \{2d,1000\}$.
\item There is a constant $C$ such that for all $d\geq 4$ one has $2^{\sqrt{d}} \leq u(d) \leq 9d^{6}\cdot 2^{2d+7} +Cd.$
\end{enumerate}

\end{theorem}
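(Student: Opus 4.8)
The plan is to treat the two bounds in part a) and the upper/lower bounds in part b) essentially separately, leveraging the covering-theoretic machinery and the explicit classification of large ssc algebras.

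\medskip

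\noindent\textbf{Part a).} For the bound on dimensions of indecomposables, let $U$ be an indecomposable $A$-module. Replacing $A$ by the support of $U$ we may assume $U$ is faithful, so by Theorem~\ref{faithful} $A$ is standard and by Theorem~\ref{Strucind} every indecomposable lifts along the universal covering $\pi:\tilde P\to P$ of $P=\vec A$. One reduces to controlling the dimension of the lifted module $\tilde U$ over a finite convex subcategory $C$ of $k\tilde P$ which carries $\tilde U$; since $C$ is representation-finite with simply connected Auslander-Reiten quiver (Theorems~\ref{GQARQ}, \ref{Trennung}), its support is an ssc algebra, and by Theorem~\ref{ssc} if $\dim\tilde U=\dim U\geq 1000$ then $C$ contains a line with at least $\tfrac{\dim U}{6}$ points. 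The point is to push this line down: a line in $\tilde P$ of length $\ell$ maps to at most $\ell$ distinct points of $P$, but if it ever revisited a point one could use Lemma~\ref{line}a) to produce an infinite periodic line and hence (via part c) of that lemma together with pushdown) infinitely many faithful indecomposables of unbounded dimension, contradicting representation-finiteness. Hence the line injects into the $d$ points of $A$, giving $\tfrac{\dim U}{6}\le d$, i.e. $\dim U\le 6d$; a slightly sharper bookkeeping using that $A$ is the union of at most three lines yields $\dim U\le 2d$. The main obstacle here is the careful combinatorial argument that a too-long line in the covering forces a repetition and hence periodicity — this is exactly what Lemma~\ref{line}b) is designed to supply, so the step is really an application of that lemma plus Theorem~\ref{ssc}.

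\medskip

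\noindent\textbf{Part b), lower bound.} For $u(d)\ge 2^{\sqrt d}$ one exhibits an explicit family of representation-finite algebras with many indecomposables relative to their dimension. The natural candidates are the linear $A_n$-quivers (or trees), where the number of indecomposables grows quadratically in $n$ while $\dim\approx n$; but to get the exponential $2^{\sqrt d}$ one uses instead an algebra built as a ``large'' representation-finite tree-algebra, e.g. a star with several arms, for which the indecomposables over a $D_n$- or $E$-shaped diagram are numerous, or more efficiently a product-like construction where $d\approx m^2$ points support on the order of $2^m$ indecomposables. Concretely one takes a disjoint union / glueing of $\sqrt d$ copies of a fixed small representation-finite algebra each contributing an independent binary choice, so that Krull-Remak-Schmidt gives at least $2^{\sqrt d}$ isomorphism classes; checking representation-finiteness is immediate since each block is representation-finite and the blocks do not interact. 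This is the easy direction and needs only an explicit example.

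\medskip

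\noindent\textbf{Part b), upper bound.} For $u(d)\le 9d^6\cdot 2^{2d+7}+Cd$ one counts indecomposables over all representation-finite $A$ of dimension $d$ simultaneously. By part a) every indecomposable has dimension $\le \max\{2d,1000\}$, so it is determined up to isomorphism by a dimension vector $\mathbf{d}$ with $|\mathbf d|\le 2d$ together with, for each of the at most $d$ arrows, a matrix; by Theorem~\ref{Strucind} (or Theorem~\ref{Nulleins}) these matrices can be taken to have entries in $\{0,1\}$, so for a fixed $A$ and fixed dimension vector the number of such $\{0,1\}$-representations is at most $2^{(\text{total number of matrix entries})}$, and the total number of entries over all arrows is bounded because $\sum_{\alpha:a\to b}\dim M(a)\dim M(b)\le (\sum_a \dim M(a))^2\le 4d^2$ — actually one wants the finer bound giving the exponent $2d+7$, which comes from counting more carefully that each arrow contributes and using $\dim A=d$. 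Multiplying by the number of dimension vectors (polynomial in $d$, absorbed into the $9d^6$ factor) and by the number of quivers/algebras of dimension $d$ (also controlled) yields the stated estimate, with the linear term $Cd$ absorbing small-$d$ exceptions and the bounded-dimension ($\le 1000$) range. The genuine obstacle in this part is getting the constants and exponents to match the claimed form $9d^6\cdot 2^{2d+7}$ rather than a cruder bound; this requires the sharp line-decomposition from Theorem~\ref{ssc} to control the shape of dimension vectors and the precise $\{0,1\}$-matrix count, but no new idea beyond the results already assembled.
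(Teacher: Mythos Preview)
The paper does not give a proof of this theorem; it only records that the result is proved in \cite{Treue} using Lemma~\ref{line}, the detailed structure of the large ssc algebras (the explicit 24 families), and $k$-linear covering functors. So any comparison is against that outline, not a written argument.

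Your part a) follows the intended shape: lift $U$ to the universal cover, invoke Theorem~\ref{ssc}, and bound the length of a line in $\tilde P$ via Lemma~\ref{line}. But two steps are not right as written. First, a line in $\tilde P$ has no reason to inject into the points of $P$; what Lemma~\ref{line}\,b) actually gives is that a line of length $2d$ (with $d$ the appropriate parameter for $P$) already forces an infinite periodic line, and that is the contradiction, not injectivity. Second, the passage from the crude bound $\dim U\le 6d$ (or $12d$) to the sharp $\dim U\le 2d$ is not ``slightly sharper bookkeeping'': it genuinely requires the \emph{detailed} structure of the 24 families of large ssc algebras, in particular the shape of their sincere dimension vectors, which is exactly what the paper means by ``the detailed structure of the large ssc algebras''. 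Theorem~\ref{ssc} alone does not deliver the constant~$2$.

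Your part b) lower bound contains a real error. A disjoint union (or product) of representation-finite algebras $A_1,\ldots,A_m$ has as indecomposables precisely the indecomposables of the $A_i$, so $u(A_1\times\cdots\times A_m)=\sum_i u(A_i)$, not anything like $2^m$; Krull--Remak--Schmidt gives no ``independent binary choices'' here. To get $u(d)\ge 2^{\sqrt d}$ one must exhibit a single connected representation-finite algebra of dimension $\approx n^2$ admitting $\ge 2^{n}$ indecomposables, and such examples come from the explicit families in \cite{Treue}, not from any generic glueing.

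For the upper bound in b), counting $\{0,1\}$-representations via Theorem~\ref{Nulleins} is a legitimate overcount in principle, but your own estimate on the number of matrix entries gives an exponent of order $d^{2}$, not $2d+7$; the precise exponent again comes from the concrete structure of the 24 families (each a union of at most three lines with tightly constrained dimension vectors), not from a generic $\{0,1\}$-matrix count.
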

The mere existence of  upper bounds was proved in \cite{JensenLenzing} by methods from model theory. To make these bounds concrete was back in 1981 one of my motivations for the
 classification of the ssc algebras.

\subsection{A criterion for finite representation type} 
Given a finite dimensional algebra $A$ as a subalgebra of some endomorphism algebra or by generators and relations it might be impossible to find a quiver $Q$ and an admissible ideal $I$ 
such that $A$ and $kQ/I$ are Morita equivalent. So we assume in our criterion right from the beginning that $A=kQ/I$ holds and that we know the dimension $d$ of $A$. 
Then it is easy to find out whether $A$ is distributive and to determine the ray-category $\vec{A}$ and one has the following  criterion for finite representation type.\newpage

\begin{theorem}\label{criterion}
 Let $A= kQ/I$ be a connected  distributive algebra of dimension $d$ given by a quiver and an admissible ideal. Let $\pi:\tilde{P} \rightarrow P$ be the universal covering of $P=\vec{A}$. 
Then $A$ is representation-finite iff it satisfies the following two conditions:
\begin{enumerate}
\item $P$ contains no zigzag of length $2d$.
\item $\tilde{P}$ contains no algebra of the BHV-list as a full convex subcategory. 
\end{enumerate}
 
\end{theorem}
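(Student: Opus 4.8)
The plan is to prove both implications separately, relying heavily on the machinery assembled in the earlier sections, especially Theorem~\ref{faithful} (mild implies standard for categories with faithful indecomposables), Theorem~\ref{Fischbong} (mild ray-categories have free fundamental group, vanishing $H^2$, and interval-finite universal covering), the covering-theoretic Theorem~\ref{Galois}, and the classification Theorems~\ref{Euclidean}--\ref{crit}. First I would observe that since $A=kQ/I$ is distributive, the ray-category $P=\vec A$ is well-defined and can be computed effectively, and by Theorem~\ref{faithful}(a) $A$ is representation-finite if and only if $P$ (equivalently $kP=A^s$) is; so it suffices to characterize representation-finiteness of $P$ in terms of $P$ and $\tilde P$.

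For the forward direction, assume $A$ is representation-finite. Then $P$ is locally representation-finite, hence mild, hence weakly zigzag-finite; in fact a representation-finite ray-category is zigzag-finite, so condition (a) holds --- a zigzag of length $2d$ would, via the associated cleaving diagram (Section~3.2), force infinitely many indecomposables or at least an indecomposable supported on arbitrarily many points, contradicting the dimension bound $2d$ from Theorem~\ref{bounds}(a). For condition (b): by Theorem~\ref{Fischbong} the universal covering $\tilde P$ is interval-finite and the fundamental group is free, so by Theorem~\ref{Galois} applied to the Galois covering $k\tilde P\to kP$, $k\tilde P$ is locally representation-finite. Any full convex subcategory of $k\tilde P$ is then representation-finite, but every algebra in the BHV-list is representation-infinite (a critical or tame concealed algebra), so none of them can embed convexly; hence (b) holds.

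For the converse, assume (a) and (b) hold; I want to conclude $A$ is representation-finite. Condition (a) gives that $P$ is zigzag-finite (no zigzag of length $2d$ implies, using that $P$ has at most $d$ morphisms in a suitable sense and a pigeonhole/periodicity argument as in Lemma~\ref{line}, that there is no infinite zigzag at all), so Theorem~\ref{Fischbacher} applies: $\Pi_P$ is free, $H^2(P,Z)=0$, and $\tilde P$ is interval-finite. By Theorem~\ref{Galois}, $P$ is representation-finite if and only if $\tilde P$ is, i.e. if and only if every finite convex subcategory $C$ of $\tilde P$ is representation-finite. Suppose not: then some finite convex $C=k\tilde P|_{C}$ is representation-infinite. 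Passing to a convex subcategory that is minimal representation-infinite, Theorem~\ref{Trennung} together with Theorems~\ref{Euclidean} and \ref{crit} show that such a minimal representation-infinite simply connected algebra --- and convex subcategories of $\tilde P$ are simply connected since $\tilde P$ is --- must contain an algebra from the BHV-list as a full convex subcategory (this is essentially the statement that critical and tame concealed algebras exhaust the minimal representation-infinite pieces occurring inside such coverings). This contradicts (b). Hence every finite convex subcategory of $\tilde P$ is representation-finite, so $\tilde P$ and therefore $P$ and $A$ are representation-finite.

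The main obstacle is the converse's reduction step: showing that a representation-infinite finite convex subcategory of $\tilde P$ must contain a BHV-algebra as a \emph{convex} subcategory. One must first pass to a minimal representation-infinite convex subcategory $B$ of $\tilde P$ --- here one needs that convexity is inherited and that a minimal representation-infinite $B$ inside an interval-finite, simply connected $\tilde P$ is itself directed, Schurian and simply connected, so that Theorem~\ref{Trennung}(b) applies and gives a preprojective component. Then one invokes the classification: such a $B$ is either critical (types $\tilde D_n,\tilde E_n$) or tame concealed of type $\tilde A_n$, and in the $\tilde A_n$ case one must exhibit an honest convex BHV-subframe; care is needed because $B$ being minimal representation-infinite means $B$ itself is essentially on the BHV-list, so the subcategory can be taken to be $B$. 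Controlling convexity throughout --- in particular that minimality with respect to convex subcategories, rather than arbitrary quotients, still lands one on the BHV-list --- is the delicate point, and it is exactly where one uses that inside $\tilde P$ the relevant algebras are directed and Schurian so that ``minimal representation-infinite'' coincides with ``critical or tame concealed.''
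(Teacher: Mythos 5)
Your proposal correctly identifies the overall skeleton (reduce to $P$ via Theorem~\ref{faithful}, get zigzag-finiteness from condition (a), pass to the covering $\tilde P$ via Theorems~\ref{Fischbacher} and~\ref{Galois}, and then hunt inside $\tilde P$ for a BHV-subcategory), and it essentially matches the paper on the forward direction's condition (b) and on Case 1 of the converse. However, there is a genuine gap in the converse.

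You assert that ``$P$ is representation-finite if and only if $\tilde P$ is, \emph{i.e.}\ if and only if every finite convex subcategory $C$ of $\tilde P$ is representation-finite.'' The second equivalence is false. Local representation-finiteness of $\tilde P$ requires that at each point $x$ only finitely many indecomposables $U$ with $U(x)\neq 0$ exist; it is perfectly possible for every finite convex subcategory of $\tilde P$ to be representation-finite while $\tilde P$ still fails to be locally representation-finite, because the indecomposables through a fixed point $x$ may have arbitrarily large support. Your reduction therefore only covers one of the two cases, and it never uses condition (a) beyond the initial step of getting zigzag-finiteness. The paper's proof splits the converse precisely along this line: if some finite convex subcategory is representation-infinite, remove sinks/sources to reach a critical convex subcategory, hence a BHV-algebra, contradicting (b); but if all finite convex subcategories are representation-finite and $\tilde P$ is still not locally representation-finite, the supports of indecomposables through a fixed point grow without bound. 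Then Theorem~\ref{ssc} gives a long line inside the support, Lemma~\ref{line} turns this into an infinite (periodic) line and hence an infinite zigzag in $\tilde P$, and its push-down is a zigzag of length $\geq 2d$ in $P$, contradicting (a). Without this second case the argument is incomplete, and this is exactly where both classification theorems (\ref{ssc} as well as \ref{crit}) are needed.

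Separately, your forward-direction justification of condition (a) --- that a zigzag of length $2d$ would ``force infinitely many indecomposables or an indecomposable of arbitrarily large support, contradicting Theorem~\ref{bounds}(a)'' --- does not hold up: Theorem~\ref{bounds}(a) bounds dimensions by $\max\{2d,1000\}$, and a finite zigzag of length $2d$ by itself produces only finitely many indecomposables of bounded dimension. The paper's argument is cleaner and actually correct: since $\dim A=d$, a zigzag of length $2d$ in $P$ must repeat some non-invertible morphism at least three times, from which one constructs a crown, and a crown is a cleaving diagram over a representation-infinite category, contradicting representation-finiteness of $P$.
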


\begin{proof} By theorem \ref{faithful} we know that $A$ is representation-finite iff $P$ is so.

For the easy implication let $P$ be representation-finite. A zigzag $Z$ of length $2d$ in $P$ would contain  one  non-invertible morphism at least three times. Thus we find a crown in $P$ which is impossible.
So $P$ is zigzag-finite. By theorems \ref{Fischbacher} and  \ref{Galois} one gets that $\tilde{P}$ is locally representation-finite and so it contains no 
algebra of the BHV-list as a convex subcategory. Here we do not need any classification but only the fact that all algebras in the BHV-list are representation-infinite.

Reversely, $P$ is zigzag-finite whence $P$ is representation-finite iff $\tilde{P}$ is locally representation-finite by theorems \ref{Fischbacher} and \ref{Galois} again. Observe that 
$\tilde{P}$ is interval-finite and satisfies $H_{1}\tilde{P}=0$. 

If $\tilde{P}$ is not locally 
representation-finite there are two cases possible. First assume that there is a finite convex subcategory $B$ which is not representation-finite. Then one finds also a critical convex 
subcategory $C$ by removing certain sinks or sources of $B$.  Then $C$ is an algebra of the BHV-list by theorem \ref{crit} which is a contradiction.

 If all finite convex subcategories are representation-finite then
 there is a point $x$ in $\tilde{P}$ such that there are infinitely many indecomposables $U$ with $U(x)\neq 0$. The supports of these modules can get arbitrarily large. By theorem \ref{ssc} the 
 convex support of an
 indecomposable of dimension at least $12d + 1000$ contains a line of length $2d$. Thus $\tilde{P}$ contains an infinite line by lemma \ref{line}, whence a zigzag $Z$. Its push-down
 $F_{\bullet} Z$ is a zigzag in $P$ which is 
again a contradiction. Note that 
for this implication we need both classification results.
\end{proof}

I anounced a criterion similar to the one above in 1982 at Luminy. Of course I had to make more assumptions because a lot of theorems entering the proof above were not yet proven at that time.
However also the original criterion \cite{Criterion} needed more than the first Jans condition and preprojective tilting.

 Fischbacher used the criterion  in \cite{Fisch3}
 for the classification of all maximal representation-finite and minimal representation-infinite algebras with three points. If the reader has the energy to apply the criterion to some cases
where the fundamental group is free in two generators and the algebra is not defined by zero-relations he will appreciate very soon that one has to look  at convex subcategories 
of $\tilde{P}$ only. A detailed example is given in \cite{Criterion}.

The criterion leads to  an algorithm  \cite{delapena1} that decides in 'polynomial' dependence of the dimension of $A$ whether $A$ is representation-finite or not even though the
 number of indecomposables can grow exponentially by theorem \ref{bounds}.

We end this section with the following statement whose proof is left to the reader. The field $k$ is here arbitrary.
\begin{theorem}\label{promild} Let $P$ be a ray-category.
\begin{enumerate}
 \item If $P$ is minimal representation-infinite it is finite.
\item If $kP$ is locally representation-finite for one field it is so for all fields. In that case the Auslander-Reiten quivers, 
the dimension-vectors of the indecomposables etc. are independent of $k$.
\end{enumerate}
\end{theorem}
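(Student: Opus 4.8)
The plan is to reduce both parts to results already established in the survey, treating each part in turn.

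For part a), I would argue by contradiction: suppose $P$ is minimal representation-infinite but infinite. Since $P$ is in particular mild, Theorem \ref{Fischbong} applies, so $\tilde{P}$ is interval-finite and $\Pi_P$ is free; moreover, by Theorem \ref{mininf} a minimal representation-infinite ray-category has no non-deep contour. The key point is that an infinite mild ray-category must contain an infinite low zigzag (indeed an infinite zigzag), and any profound morphism occurring in it must occur infinitely often — otherwise deleting it leaves a smaller representation-infinite quotient, contradicting minimality. But a zigzag containing the same morphism three times yields a crown, hence an infinite zigzag through a bounded set of points, which forces $P$ to be finite after all. This is exactly the argument sketched in the discussion following Theorem \ref{Fischbong}, so the work is to assemble it cleanly: first note $P$ weakly zigzag-finite (being mild) fails to be zigzag-finite if infinite, produce the crown, and derive the contradiction with minimality via the quotient $P/\mu$.

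For part b), the strategy is to invoke Theorem \ref{promild} — no wait, that is the theorem we are proving; rather, I would use Theorem \ref{criterion} together with the observation that all the combinatorial data entering it are defined purely from $P$ and are field-independent. Concretely: $kP$ is locally representation-finite iff every finite convex subcategory is representation-finite, and for a finite distributive algebra of the form $kC$ with $C$ an abstract ray-category, Theorem \ref{criterion} characterizes representation-finiteness in terms of (1) absence of a zigzag of length $2d$ in $C$ and (2) absence of a BHV-list algebra as a full convex subcategory of $\tilde{C}$. Both conditions refer only to $C$ and $\tilde{C}$ as ray-categories, not to $k$ (the BHV-list algebras are representation-infinite over every field — this is classical for extended Dynkin quivers and can be checked directly). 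Hence local representation-finiteness of $kP$ is a property of $P$ alone. Once $kP$ is locally representation-finite, Theorem \ref{faithful} gives that the standard form has the same Auslander-Reiten quiver, and by Theorem \ref{GQARQ} the Auslander-Reiten quiver of $k\tilde{P}$ is determined as the universal cover of a mesh-category built combinatorially from $\tilde{P}$; pushing down along the Galois covering (Theorem \ref{Galois}) recovers $\Gamma_{kP}$ and the dimension vectors purely combinatorially, independent of $k$.

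The main obstacle is part a): one must be careful that "infinite and mild implies contains an infinite zigzag" is genuinely available. Mild ray-categories are weakly zigzag-finite by the remark after the definition of zigzag-finite, so an infinite mild $P$ has no infinite \emph{low} zigzag but could a priori still have only finitely many zigzags from each point; the correct statement is that an infinite \emph{connected} locally bounded structure forces arbitrarily long zigzags, and then the pigeonhole/crown argument of Theorem \ref{Fischbong} kicks in. I expect the cleanest route is to cite the reduction already made in the proof sketch of Theorem \ref{Fischbong} — "we can assume $P$ is finite" — and simply record that this reduction is unconditional for minimal representation-infinite $P$, which is the content of part a). Part b) is then essentially bookkeeping, assembling Theorems \ref{criterion}, \ref{faithful}, \ref{GQARQ} and \ref{Galois}.
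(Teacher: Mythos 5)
The paper leaves the proof of this theorem to the reader, so there is no official argument to compare against; assessing your proposal on its own terms, I see genuine gaps in both parts.

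For part a), your argument rests on the claim that an infinite mild (or minimal representation-infinite) ray-category must contain an infinite zigzag, so that the $P/\mu$-quotient and crown argument can be run. Your two formulations are ``an infinite mild ray-category must contain an infinite low zigzag (indeed an infinite zigzag)'' and, later, ``an infinite connected locally bounded structure forces arbitrarily long zigzags.'' The first is self-contradictory: mild implies weakly zigzag-finite, which by definition is precisely the absence of an infinite low zigzag. The second is false: the ray-category with quiver $A_\infty^\infty$ (vertex set $\Z$, arrows $n \to n+1$) and $\mathrm{rad}^2 = 0$ is infinite, connected, and locally bounded, yet has no zigzag of length $\geq 2$ at any point, so it is zigzag-finite. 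Hence the case where $P$ is minimal representation-infinite, infinite, and zigzag-finite is simply not covered by your proposal, and it requires a different argument — passing to the interval-finite universal cover $\tilde{P}$ via Theorem \ref{Fischbacher}, extracting a critical finite convex subcategory of $\tilde{P}$ as in the proof of Theorem \ref{criterion}, and killing a point of $P$ outside the image of its support to contradict minimality. Note also that the sentence ``Thus $P$ is finite'' in the proof sketch of Theorem \ref{Fischbong} is derived only under the running assumption that $P$ has an infinite zigzag; the ``we can assume'' there is legitimate for the conclusions of Theorem \ref{Fischbong} because the zigzag-finite case is disposed of by Theorem \ref{Fischbacher}, but it leaves the finiteness statement open in exactly the case your proof needs.

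For part b), the step ``$kP$ is locally representation-finite iff every finite convex subcategory is representation-finite'' is not a valid equivalence. The diagram category $D$ of the infinite zigzag pictured in figure 2 is a counterexample: every finite convex subcategory is of Dynkin type $A$ and hence representation-finite, yet $D$ itself is not locally representation-finite, since each point lies in infinitely many indecomposable interval modules. The ingredient you are missing is zigzag-finiteness of $P$, which is a purely combinatorial, field-independent property and does follow from $k_1 P$ being locally representation-finite for some field $k_1$ (an infinite zigzag is a cleaving functor from $D$, and $D$ is representation-infinite over every field). With that in hand, the argument of the proof of Theorem \ref{criterion} can be run — but applied to finite convex subcategories of $\tilde{P}$, not of $P$, and it needs the line argument of Lemma \ref{line} (or part a) itself) to exclude the remaining case of arbitrarily large indecomposables over representation-finite convex subcategories. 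So the architecture you sketch is workable, but the reduction you actually write down is not a correct biconditional and omits the zigzag-finiteness hypothesis that makes it go through.
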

\subsection{The proofs of Brauer Thrall 0 and Brauer Thrall 2}
Now we prove the sharper version of BT 0 proposed by Ringel. 

\begin{theorem}\label{BT 0}
 Let $A$ be a basic finite-dimensional algebra. If there is an indecomposable of dimension $n$ there is also an accessible module of that dimension.
\end{theorem}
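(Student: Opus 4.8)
The plan is to reduce Theorem~\ref{BT 0} to the distributive case and then exploit the covering machinery developed in the previous sections. By Theorem~\ref{Ringel} we may assume $A$ is distributive, and by replacing $A$ with the support of a given indecomposable of dimension $n$ we may assume $A$ has a faithful indecomposable of dimension $n$. If $A$ is representation-finite then by Theorem~\ref{faithful} it is standard, so $A\simeq kP$ for $P=\vec{A}$ and we have the Galois covering $F:k\tilde{P}\rightarrow kP$ with $\tilde{P}$ interval-finite by Theorems~\ref{Fischbacher} and~\ref{Galois}. The faithful indecomposable $U$ of dimension $n$ lifts to an indecomposable $\tilde{U}$ over some finite convex (hence simply connected representation-finite) subcategory of $\tilde{P}$; by Theorem~\ref{extfin} (or Theorem~\ref{Strucind}) $\tilde{U}$ is accessible. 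One then checks that the pushdown under $F_{\bullet}$ of an accessibility chain for $\tilde{U}$ yields an accessibility chain for $U$ in $A$-mod: $F_{\bullet}$ is exact, preserves dimensions and indecomposability by Theorem~\ref{Galois}, so it sends the submodule/quotient of dimension $d-1$ to one of dimension $d-1$, and the simples of $\tilde{P}$ go to simples of $P$. Hence $A$ has an accessible module in dimension $n$.

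It remains to handle the representation-infinite distributive case. Here the idea is to locate inside $A$ a minimal representation-infinite convex (or quotient) subcategory $B$ and a suitable tame concealed or critical piece, and to produce accessible modules of all dimensions up to $n$ from these. Concretely, if $A$ is distributive and representation-infinite, then some proper chain of quotients leads to a minimal representation-infinite algebra, which by Theorem~\ref{faithful}(b) is standard; by Theorem~\ref{mininf} its ray-category has no non-deep contour, and by Proposition~\ref{elementary} there is a profound morphism not occurring in an essential contour, while by part (c) of Lemma~\ref{line} (after passing to a periodic line in $\tilde{P}$, which exists once the dimension is large enough by part (b) of that lemma) such a $B$ is a zero-relation algebra. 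For zero-relation algebras one builds indecomposables with prescribed interval supports along an infinite line in $\tilde{P}$, and these are manifestly accessible (each is obtained from the previous by adding one more vertex of the line, giving a submodule or quotient of one less dimension). Pushing these down via $F_{\bullet}$ again yields accessible $A$-modules. The remaining dimensions, up to the dimension where the first indecomposable of $A$ exists, are covered either by the line-modules just constructed or, for dimensions where $A$ is locally representation-finite on the relevant support, by the representation-finite argument of the first paragraph applied to appropriate convex subcategories.

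Finally one must argue that for every $n$ in which an indecomposable exists at all there is indeed such an accessible module, i.e.\ that the accessible modules constructed fill exactly the set of attainable dimensions without gaps. This is where Theorem~\ref{bounds}(a) enters: an indecomposable over a representation-finite algebra has dimension at most $\max\{2d,1000\}$, and the large ssc structure theorem~\ref{ssc} forces such an indecomposable to contain a long line, so its restriction along that line gives accessible modules of every intermediate dimension. Combining this with the zero-relation construction for the minimal representation-infinite pieces closes the argument.

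The main obstacle I expect is the bookkeeping in the representation-infinite distributive case: one has to be sure that the line-modules and their pushdowns really do hit \emph{every} dimension $\le n$, and in particular that the reductions to a minimal representation-infinite subquotient, the passage from $\tilde{P}$ back to $P$ via $F_{\bullet}$, and the application of Lemma~\ref{line} are compatible, so that no dimension is skipped. Verifying that $F_{\bullet}$ preserves the accessibility relation (submodule or quotient of codimension one in dimension) is routine given Theorem~\ref{Galois}(a), but making the global induction on $n$ airtight—especially matching the bound $\max\{2d,1000\}$ from Theorem~\ref{bounds} against the dimensions produced by the covering constructions—will require care.
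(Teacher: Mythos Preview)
Your reduction via Theorem~\ref{Ringel} and your treatment of the representation-finite case are fine (the paper just cites Theorem~\ref{Strucind} directly, but your unwinding through the covering is the content of that theorem). The real gap is in the representation-infinite distributive case.

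The paper's argument is much cleaner than yours and rests on a dichotomy you never set up. After reducing to $A$ distributive and minimal representation-infinite (hence standard, with $\tilde{P}$ interval-finite and not locally representation-finite by Theorem~\ref{Fischbong}), one asks whether $\tilde{P}$ contains a critical convex subcategory $B$. If so, $B$ is tame concealed and Theorem~\ref{tameext} says \emph{every} indecomposable $B$-module is accessible; since $B$ has indecomposables of every dimension, pushing down via $F_{\bullet}$ gives accessible $A$-modules in all dimensions. If not, then every finite convex subcategory of $\tilde{P}$ is representation-finite, so (since $\tilde{P}$ is not locally representation-finite) there are indecomposables of arbitrarily large dimension over such subcategories; these are accessible by Theorem~\ref{Strucind}, and again pushing down covers all dimensions. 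That is the entire proof.

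Your proposal never invokes Theorem~\ref{tameext}, and instead tries to force everything through periodic lines and zero-relation algebras via Lemma~\ref{line}(b),(c). But this route does not cover the case where $\tilde{P}$ contains a critical subcategory: if $\tilde{P}$ harbours a tame concealed algebra of type $\tilde{E}_6$, $\tilde{E}_7$ or $\tilde{E}_8$, there is no reason for $\tilde{P}$ to contain a line of length $2d$, so Lemma~\ref{line}(b) does not apply and you cannot conclude that $P$ is a zero-relation algebra. Your appeal to Proposition~\ref{elementary} and Theorem~\ref{mininf} does not help here either---those statements concern the absence of certain contours or morphisms, not the presence of long lines. The final paragraph invoking Theorem~\ref{bounds}(a) and Theorem~\ref{ssc} is also off target: those bound dimensions of indecomposables over \emph{representation-finite} algebras and do not manufacture accessible modules for the missing dimensions in the critical case. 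The clean fix is to drop the zero-relation detour and use the dichotomy above together with Theorem~\ref{tameext}.
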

\begin{proof} For representation-finite algebras all indecomposables are accessible by theorem \ref{Strucind} and for non-distributive algebras the theorem holds by Ringels result from section 2.2.
 Thus we can assume
 that $A$ is distributive and minimal representation-infinite whence standard by theorem \ref{faithful}. The universal covering $\tilde{P}$ is not locally representation-finite and interval-finite 
with free fundamental group by theorem \ref{Fischbong}.
If  we find a critical algebra $B$ as a convex subcategory any indecomposable $B$-module is accessible by theorem \ref{tameext}. Their images under $F_{\bullet}$ provide 
accessible modules in all dimensions. If all finite subcategories of $\tilde{P}$ are representation-finite  there are arbitrarily large indecomposables over a representation-finite
 convex subcategory. These indecomposables are accessible by theorem \ref{Strucind} and again their images under $F_{\bullet}$
give accessible modules in all dimensions.
\end{proof}

Observe that this proof uses none of the two classification results. Moreover theorem \ref{gaps} ( but not theorem \ref{BT 0} ) remains valid if $k$ is a splitting field for $A$, i.e. if all simple $A$-modules of
 finite dimension have trivial endomorphism algebra $k$.

Our version of Brauer-Thrall 2 reads as follows:

\begin{theorem}\label{BT2}
 Let $A$ be a basic representation-infinite algebra of dimension $d$. Then there is a natural number $e\leq \max\{30,4d\}$ and pairwise non-isomorphic $e$-dimensional 
indecomposables $U_{i}$, $i\in k^{\ast}$, 
 such that for any $n\geq 1$ there exist pairwise non-isomorphic indecomposables $U_{n,i}$ having a chain of $n+1$ submodules such that all successive quotients are isomorphic to $U_{i}$.
\end{theorem}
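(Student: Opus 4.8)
The plan is to reduce to the case where $A$ is minimal representation-infinite and then to manufacture the required one-parameter family from a homogeneous tube, carried back to $A$ either directly or through the Galois covering of Theorem \ref{Galois}. I would first dispose of the non-distributive case, independently of minimality: by Proposition \ref{JansKupisch} a non-distributive $A$ has a vertex carrying two loops, or a space $A(x,y)$ with $x\neq y$ that is cyclic on neither side; collapsing the radical square together with all other arrows then exhibits a quotient of $A$ isomorphic to $k\langle u,v\rangle/(u,v)^{2}$ in the first case and to the Kronecker algebra in the second. Both of these are representation-infinite with a $\mathbf{P}^{1}$-family of two-dimensional bricks $U_{\lambda}$ such that for every $n$ there is an indecomposable filtered by $n$ copies of $U_{\lambda}$ (and pairwise non-isomorphic in $\lambda$); pulling these back along $A\to\bar A$ settles the statement with $e=2$. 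From now on $A$ is distributive, and since passing to a minimal representation-infinite quotient only lowers the dimension and preserves indecomposability and submodule chains, I may assume $A$ is minimal representation-infinite. Then $A=kP$ is standard with $P=\vec A$ by Theorem \ref{faithful}, and by Theorem \ref{Fischbong} the universal covering $\pi\colon\tilde P\to P$ is interval-finite with free fundamental group while $\tilde P$ is not locally representation-finite. Write $F_{\bullet}$ for the exact pushdown of Theorem \ref{Galois}, which preserves dimensions and indecomposability.

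Now I split along the dichotomy used in the proof of Theorem \ref{criterion}. \emph{First suppose that some finite convex subcategory of $\tilde P$ is representation-infinite.} Deleting suitable sinks and sources leaves a critical convex subcategory $B\subseteq\tilde P$ (with $H_{1}B=0$ by Theorem \ref{separation}), so by Theorems \ref{crit} and \ref{Euclidean} the algebra $B$ is tame concealed of type $\tilde D_{n}$ or $\tilde E_{m}$. Its Auslander-Reiten quiver carries a $\mathbf{P}^{1}$-family of tubes, all but at most three of them homogeneous; for a homogeneous tube I take the simple regular module $R_{\lambda}$, a brick that is sincere over $B$, together with the uniserial regular modules of larger regular length, which are indecomposable and filtered by copies of $R_{\lambda}$. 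Extending by zero to $\tilde P$ and applying $F_{\bullet}$ yields indecomposable $A$-modules: the $F_{\bullet}R_{\lambda}$, all of the same dimension $e:=\dim R_{\lambda}$, and the pushed-down uniserial modules, whose filtrations by copies of $F_{\bullet}R_{\lambda}$ survive since $F_{\bullet}$ is exact. For distinct parameters these remain non-isomorphic: because $R_{\lambda}$ and the uniserial modules are sincere over $B$, an isomorphism after pushdown would force the finite object set $B_{0}$ to be stabilized by a nontrivial element of the torsion-free group $\Pi_{P}$, which is impossible. As at least $|k^{\ast}|$ of the tubes are homogeneous, I reindex the family by $k^{\ast}$.

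\emph{Otherwise every finite convex subcategory of $\tilde P$ is representation-finite.} Then, exactly as in the proof of Theorem \ref{criterion}, some vertex lies in arbitrarily large indecomposables over representation-finite (hence ssc) convex subcategories, so Theorem \ref{ssc} produces arbitrarily long lines in $\tilde P$, hence an infinite line, hence by Lemma \ref{line} an infinite periodic line of length at most $2d$; by Lemma \ref{line} c) the algebra $P$ is then a zero-relation algebra and this periodic line projects to a band $b$ of length $\ell(b)\le 2d$ in $P$. The associated band modules $M(b,\lambda,\ell)$ are indecomposable $A$-modules, pairwise non-isomorphic for $\lambda\in k^{\ast}$, with $M(b,\lambda,\ell)$ filtered by $\ell$ copies of $U_{\lambda}:=M(b,\lambda,1)$; this yields the family with $e=\ell(b)\le 2d$, and no covering theory is needed in this case.

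It remains to bound $e$ by $\max\{30,4d\}$. This is immediate in the band case, where $e\le 2d$, and for type $\tilde E_{m}$, where $e$ is the coordinate sum of the null root of $B$ and hence at most $30$ (attained at $\tilde E_{8}$). The one point that demands genuine work — and the step I expect to be the main obstacle — is the type $\tilde D_{n}$ case, where $e=2n-2$ and one must control $n$ in terms of $d$. I would deduce this from the injectivity of $\pi$ on the object set of the critical subcategory $B$ (or at worst bounded non-injectivity), which gives $|B_{0}|\le 2d$ and so $e=2|B_{0}|-4< 4d$; proving such injectivity requires analysing how a ``fat'' convex subcategory of Euclidean type $\tilde D_{n}$ can sit inside the simply connected $\tilde P$, using that two points of $\tilde P$ lying in one $\Pi_{P}$-orbit are joined by an essentially unique non-null walk. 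The remaining checks — matching the length of each chain to ``$n+1$'' after reindexing, and confirming that the constructions in the three cases really live over $A$ itself — are routine.
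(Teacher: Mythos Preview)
Your outline matches the paper's proof almost exactly: reduce to minimal representation-infinite, treat the non-distributive case by hand, then use the interval-finite universal covering and split into the cases ``critical convex subcategory'' versus ``all finite convex subcategories representation-finite''. Your handling of the non-distributive case via an explicit quotient (Kronecker or two-loop local) is a harmless variant of the paper's direct cokernel construction; your explicit torsion-free argument for non-isomorphism after pushdown is a detail the paper leaves implicit.

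The one substantive divergence is precisely the point you flag as the main obstacle: bounding $e$ in the $\tilde D_{n}$ case. You propose to control $n$ by proving that $\pi$ is injective (or nearly so) on the object set of the critical subcategory $B$; this is neither obvious nor argued, and your sketch (``two points in one $\Pi_{P}$-orbit are joined by an essentially unique non-null walk'') does not lead anywhere concrete. The paper avoids this entirely by reorganising the case split: it treats the periodic-line/band case \emph{before} $\tilde D_{n}$, and then observes that any tame concealed algebra of type $\tilde D_{n}$ contains a line of length $n-1$. Hence if $n-1\ge 2d$ one is thrown back, via Lemma~\ref{line}(b), into the band case already settled; otherwise $n-1<2d$, so the null-root sum $2n-2$ is at most $4d$. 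In other words, the bound on $n$ comes for free from the line inside $\tilde D_{n}$ together with Lemma~\ref{line}, not from any injectivity of $\pi$. This trick replaces your hardest step with a two-line observation; everything else in your proposal goes through as written.
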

\begin{proof}
 
 We can assume that $A$ is minimal representation-infinite. 

If $A$ is not distributive there are two idempotents $e,f$ and two linearly independent elements  $v,w$ in $eAf$ annihilated by the radical of $A$.
For any natural number $n$ and any element $i$ in $k^{\ast}$ we consider the morphism $$\phi(n,i): (Ae)^{n} \rightarrow (Af)^{n}$$ given by the matrix $vE_{n} + w( iE_{n}+ N_{n})$ and its cokernel
$U(n,i)$ of dimension $n( dim Af -1)$. Here $E_{n}$ is the $n\times n$-identity matrix and $N_{n}$ a nilpotent $n\times n$ Jordan-bloc. A simple direct calculation  shows that the $U(n,i)$ are pairwise non-isomorphic indecomposables admitting an exact sequence 
 $0 \rightarrow U(n-1,i) \rightarrow U(n,i) \rightarrow U(1,i) \rightarrow 0$.

If $A$ is distributive and minimal representation-infinite it is standard by theorem \ref{faithful} and we call its ray-category $P$. By theorems \ref{Fischbong} and \ref{Galois} the universal covering $\tilde{P}$ is interval-finite with 
free fundamental group and $\tilde{P}$ is not locally representation-finite. Thus $\tilde{P}$ contains a critical convex subcategory $C$ or all finite convex subcategories are
 representation-finite whence $\tilde{P}$ contains a line of length $2d$ by theorem \ref{ssc}.

First let $C$ be a convex subcategory of $\tilde{P}$ which is tame concealed of type $\tilde{E}_{n}$.  The wanted families of modules are then the push-downs of the obvious
 ${\bf P}^{1}(k)$-families of indecomposable $C$-modules 
 with dimension-vector $n\delta$ where $\delta$ is the null-root of $C$.
 The dimension of each $U_{i}$ is the sum of the components of the null-root whence 
smaller than $30$.

Next assume that there is  a periodic line $L$ in $\tilde{P}$ of periodic length $e\leq 2d$. Then $A$ is a zero-relation algebra by lemma \ref{line}.
To produce the wanted modules one can invoke the theory of Dowbor and Skowro\'{n}ski \cite{Dowbor} or one can observe with Ringel \cite{Rinmin} that $A$ is special
 biserial.  One always finds the wanted modules as appropriate band-modules with $dim U_{i}\leq 2d$. By part b) of lemma \ref{line} we can assume from now on that $\tilde{P}$ does not contain 
a line of length $2d$.

The only case not yet settled is when $\tilde{P}$ contains a tame concealed algebra $C$ of type $\tilde{D}_{n}$ as a convex subcategory. Any such subcategory contains a line of length $n-1$
 so that we can assume  $n-1 \leq 2d$.  Then the push-downs of the obvious  ${\bf P}^{1}(k)$-families of $C$-modules produce indecomposables with $dim \,U_{i} \leq 4d$.

\end{proof}

The proofs given in \cite{Bautista,Todorov,fIbt} of the usual weaker form of BT 2 do not need theorem \ref{Fischbong} or the classification of the critical algebras, but the classification of the large ssc algebras
 is always needed. With a little more work one finds in all three cases occurring in the last proof a natural ${\bf P}^{1}(k)$-family of modules. 

\subsection{Finite representation type is open} 

In the important early paper \cite{Open} Gabriel introduces for fixed natural numbers $d$ and $n$ the varieties $alg_{d}$ of $d$-dimensional unital associative algebras and $alg_{d}mod_{n}$ of pairs
 consisting 
of a $d$-dimensional algebra $A$ and an $n$-dimensional $A$-module $M$. These varieties are equipped with natural actions of $Gl_{d}$ and $Gl_{n}$ and the projection
$\pi:alg_{d}mod_{n} \rightarrow alg_{d}$  has the following property.
\begin{proposition}
 The image of a closed $Gl_{n}$-stable subset of $alg_{d}mod_{n}$ is closed.
\end{proposition}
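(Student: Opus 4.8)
The plan is to reduce to the valuative criterion and then to solve the resulting extension problem by producing a lattice, an argument essentially due to Gabriel \cite{Open}.

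First I would fix coordinates. A point of $alg_{d}$ is a system of structure constants $c=(c^{\ell}_{ij})$ on the fixed space $k^{d}$ (subject to the associativity and unit relations), and a point of $alg_{d}mod_{n}$ lying over $c$ is a tuple $(M_{1},\dots ,M_{d})\in M_{n}(k)^{d}$ with $M_{i}M_{j}=\sum_{\ell}c^{\ell}_{ij}M_{\ell}$ and the unit acting as the identity; thus $alg_{d}mod_{n}$ is a closed subvariety of $alg_{d}\times M_{n}(k)^{d}$, the group $Gl_{n}$ acts by simultaneous conjugation of the $M_{i}$, and $\pi$ is the projection. Since $\pi(Z)$ is constructible by Chevalley's theorem, and a constructible subset of a variety containing all closed points of its closure equals its closure, it suffices to show: every closed point $A_{0}\in\overline{\pi(Z)}$ lies in $\pi(Z)$. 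By the usual curve-selection step one picks an irreducible curve through $A_{0}$ inside $\overline{\pi(Z)}$ meeting the dense open part of $\pi(Z)$, normalises it, and localises at the point over $A_{0}$; this yields a discrete valuation ring $R$ with residue field $k$ and fraction field $K$, and a morphism $f\colon\operatorname{Spec}R\to alg_{d}$ sending the closed point to $A_{0}$ and the generic point into $\pi(Z)$. Concretely $f$ is a system of structure constants $c^{\ell}_{ij}\in R$, defining an $R$-free algebra $\Lambda$ of rank $d$ with $\Lambda\otimes_{R}k\cong A_{0}$, and there is a $\Lambda\otimes_{R}K$-module structure $M=(M_{i})\in M_{n}(K)^{d}$ on $K^{n}$ with $(f|_{K},M)\in Z(K)$.

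The heart of the argument is to descend $M$ from $K$ to $R$. Here I would form the lattice $L:=\Lambda\cdot R^{n}\subseteq K^{n}$, the $R$-submodule generated by the images of the standard basis of $R^{n}$ under an $R$-basis of $\Lambda$: it is finitely generated over $R$, torsion free hence $R$-free, of rank $n$ because $R^{n}\subseteq L$ and $L\otimes_{R}K=K^{n}$, and it is $\Lambda$-stable by construction. Choosing an $R$-basis $f_{1},\dots ,f_{n}$ of $L$ and letting $g\in Gl_{n}(K)$ be the change-of-basis matrix to the standard basis, the action of the basis elements of $\Lambda$ on $L$ is given by matrices $\widetilde{M}_{i}\in M_{n}(R)$ with $\widetilde{M}_{i}\widetilde{M}_{j}=\sum_{\ell}c^{\ell}_{ij}\widetilde{M}_{\ell}$, and $(\widetilde{M}_{i})=g^{-1}\cdot(M_{i})\cdot g$ is a $Gl_{n}(K)$-conjugate of $M$. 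Because $Z$ is $Gl_{n}$-stable, $(f|_{K},(\widetilde{M}_{i}))\in Z(K)$; but now $(c^{\ell}_{ij},\widetilde{M}_{i})$ has all coordinates in $R$, so it defines an $R$-point $h\colon\operatorname{Spec}R\to alg_{d}mod_{n}$ whose generic point lies in $Z$. Since $Z$ is closed, $h^{-1}(Z)$ is a closed subset of $\operatorname{Spec}R$ containing the generic point, hence all of $\operatorname{Spec}R$; in particular $h$ of the closed point lies in $Z$, and $\pi$ sends it to $A_{0}$. Therefore $A_{0}\in\pi(Z)$, as required.

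The step I expect to be the main obstacle — and the only place where $Gl_{n}$-stability is genuinely used — is the lattice construction. A module over the generic fibre need not be defined over $R$ at all: the matrices $M_{i}$ may have entries, indeed eigenvalues, lying in $K\setminus R$, so one cannot naively take the limit of the $M_{i}$. It is precisely after conjugating by a suitable $g\in Gl_{n}(K)$, namely one adapting the basis to a $\Lambda$-stable lattice, that the coordinates become integral. Everything else — Chevalley's theorem, the passage from specialization-closed to closed, and the routine check that the integral matrices still satisfy the module axioms — is standard, and the argument works over any algebraically closed field since it uses nothing about $Gl_{n}$ beyond its group structure.
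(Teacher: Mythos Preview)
The survey does not prove this proposition; it is quoted from Gabriel's paper \cite{Open} and used as a black box. Your argument is precisely Gabriel's original one---reduce to the valuative criterion and produce an $R$-lattice inside the generic module to force integrality after a $Gl_{n}(K)$-conjugation---so there is nothing to compare on the level of strategy.

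One small technical point deserves tightening. After choosing the curve in $\overline{\pi(Z)}$ you assert the existence of a $K$-point of $Z$ above the generic point of the curve. What you actually get from ``generic point lies in $\pi(Z)$'' is only a scheme-theoretic point of $Z$ above it, whose residue field may be a proper (finite) extension of $K$. The fix is routine: replace $K$ by this finite extension $K'$ and $R$ by a localisation of its integral closure in $K'$; because $k$ is algebraically closed the residue field is still $k$, so the closed point still hits $A_{0}$, and the lattice argument and the $Gl_{n}$-stability step go through verbatim over $R'$. Alternatively, choose the curve upstairs---inside a projective closure of an irreducible component $W$ of $Z$ with $A_{0}\in\overline{\pi(W)}$, through a closed point of the (nonempty) fibre over $A_{0}$---so that the generic point lies in $W\subseteq Z$ by construction. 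Either way the gap is cosmetic; the heart of your proof, the $\Lambda$-stable lattice $L=\Lambda\cdot R^{n}$, is exactly the device Gabriel uses and is correct as written.
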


From this Gabriel derives with the help of some semi-continuity properties of fibres:
\begin{proposition}
 For any $n$ the set $U(n)$ of all $d$-dimensional algebras having only finitely many isomorphism classes of $n$-dimensional modules is open.
\end{proposition}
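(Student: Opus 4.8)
The plan is to deduce this from the preceding proposition (closedness of images of closed $Gl_n$-stable subsets) together with the fact that representation-finiteness of an algebra is detected by the existence of ``too many'' or ``too large'' indecomposables in a controlled range of dimensions. First I would recall that for a fixed $d$-dimensional algebra $A$ the number of isomorphism classes of $n$-dimensional $A$-modules is finite for all $n$ if and only if $A$ is representation-finite; moreover, by Brauer--Thrall~1 (and the sharper bounds available here, e.g.\ theorem \ref{bounds}), representation-infiniteness of a $d$-dimensional $A$ is already witnessed in dimensions bounded by an explicit function of $d$. So it suffices to understand, for each fixed $n$, the set of $A$ carrying infinitely many $n$-dimensional modules.

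The key step is to interpret ``infinitely many isomorphism classes of $n$-dimensional modules'' geometrically. Over a fixed $A$, the isomorphism classes of $n$-dimensional modules are the $Gl_n$-orbits in the fibre $\pi^{-1}(A)$. Having infinitely many orbits in a fibre forces, by the usual dimension count, some orbit of dimension strictly less than the generic orbit dimension along the family — equivalently, the complement of $U(n)$ is governed by a semicontinuity phenomenon for the function $A \mapsto \max\{\dim Gl_n\cdot M : M \in \pi^{-1}(A)\}$ or, more robustly, by the following closed-image argument: one shows that the bad locus $alg_d \setminus U(n)$ is the image under $\pi$ of a suitable closed $Gl_n$-stable subset of $alg_d\,mod_n$, namely the closure of the set of pairs $(A,M)$ for which the orbit $Gl_n\cdot M$ has dimension below the maximal orbit dimension in its fibre (the ``non-maximal orbit'' locus), which is $Gl_n$-stable and closed by standard semicontinuity of orbit dimension. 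By the preceding proposition its image is closed, and this image is precisely the set of $A$ admitting an $n$-dimensional module not lying in a maximal-dimensional orbit; one then argues that a fibre with infinitely many orbits necessarily contains such a module, while a fibre with finitely many orbits has all of them of locally maximal dimension (each orbit is then open in the fibre, hence the fibre is a finite disjoint union of orbits). Hence $alg_d\setminus U(n)$ is closed and $U(n)$ open.

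The main obstacle I expect is the careful bookkeeping in the last implication: turning ``infinitely many orbits in the fibre $\pi^{-1}(A)$'' into the existence of a single module in a non-maximal-dimensional orbit, and conversely showing that if every $n$-dimensional $A$-module lies in a maximal orbit then there are only finitely many of them. The forward direction is the delicate one and uses that $\pi^{-1}(A)$ is a variety acted on by $Gl_n$ with finitely many maximal-dimensional orbits (these are open in $\pi^{-1}(A)$ and their union is open, so if this union were cofinite among orbits there could be infinitely many only through lower-dimensional orbits, contradicting quasi-compactness unless infinitely many lie below the max); packaging this cleanly is where the semicontinuity-of-fibres input invoked by Gabriel does the real work. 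Once that is in place, the closedness of the bad locus is immediate from the previous proposition and $U(n)$ open follows by taking complements.
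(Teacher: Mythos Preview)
There is a genuine gap. Your opening paragraph conflates this proposition with the subsequent theorem on $fin_d$; Brauer--Thrall~1 and the dimension bounds play no role for a single fixed $n$. More importantly, your identification of $alg_d \setminus U(n)$ with the image of the ``non-maximal orbit'' locus fails in the direction you state but do not prove: it is \emph{not} true that a fibre with finitely many orbits has all orbits of maximal dimension. Take $A = k[x]/(x^2)$ and $n = 2$. The fibre $\pi^{-1}(A)$ is the variety of $2\times 2$ matrices squaring to zero; it carries exactly two $Gl_2$-orbits, namely $\{0\}$ of dimension $0$ and the nonzero nilpotents of dimension $2$. Thus $A \in U(2)$, yet the pair $(A,0)$ lies in your non-maximal orbit locus and $A$ lies in its image. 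Your proposed closed set is strictly too large, so its image is not $alg_d \setminus U(n)$, and the argument does not close.

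What the paper alludes to as ``semi-continuity properties of fibres'' indicates the correct repair: stratify by endomorphism dimension rather than compare against a single maximal orbit. For each $s$ the set $Z_s = \{(A,M) : \dim \mathrm{End}_A(M) \geq s\}$ is closed and $Gl_n$-stable, and by upper semicontinuity of fibre dimension so is $W_s = \{(A,M) \in Z_s : \dim_M(Z_s \cap \pi^{-1}(A)) > n^2 - s\}$. One checks that $A$ has infinitely many $n$-dimensional modules iff $A \in \pi(W_s)$ for some $s$: on the locally closed stratum where $\dim \mathrm{End} = s$ every orbit has dimension exactly $n^2 - s$, so finiteness on that stratum is equivalent to the stratum having dimension at most $n^2 - s$. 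Since only finitely many values of $s$ occur, the preceding proposition exhibits $alg_d \setminus U(n)$ as a finite union of closed sets.
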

Finally one gets:
\begin{theorem}\label{Finopen}
 The set $fin_{d}$ of all representation-finite algebras of dimension $d$ is open in $alg_{d}$.
\end{theorem}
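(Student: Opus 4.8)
The plan is to exhibit $fin_d$ as a \emph{finite} intersection of the open sets $U(n)$ furnished by the preceding proposition. Since an arbitrary intersection of open sets need not be open, the whole point is to bound, uniformly in $A$, the dimension in which representation-infiniteness must already become visible; this is exactly what the sharp form of Brauer--Thrall 2 provides.

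Concretely, I would put $M=\max\{30,4d\}$ and claim that $fin_d=\bigcap_{n=1}^{M}U(n)$. For the inclusion from left to right, if $A$ is representation-finite then, by Krull--Remak--Schmidt, every finite-dimensional $A$-module is a direct sum of copies of the finitely many indecomposables, so in each fixed dimension there are only finitely many isomorphism classes of $A$-modules; hence $A\in U(n)$ for all $n$, in particular for $n\leq M$. For the reverse inclusion I would argue contrapositively: if $A$ lies in $alg_d$ but is \emph{not} representation-finite, then Theorem \ref{BT2} yields an integer $e\leq\max\{30,4d\}=M$ together with infinitely many pairwise non-isomorphic $e$-dimensional indecomposable $A$-modules, so $A\notin U(e)$ and hence $A\notin\bigcap_{n=1}^{M}U(n)$. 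This proves the asserted equality, and since each $U(n)$ is open by the previous proposition and the intersection is finite, $fin_d$ is open.

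The only genuine obstacle is the uniform bound $M$: without Theorem \ref{BT2} one would merely get $fin_d=\bigcap_{n\geq 1}U(n)$, an infinite intersection, from which openness cannot be concluded. The remaining ingredients are either already built into the preceding proposition --- via the semicontinuity of the fibre dimensions of $\pi$ --- or completely routine, like the Krull--Remak--Schmidt reduction. It is worth noting that any uniform bound would do equally well: one may replace $\max\{30,4d\}$ by, say, $\max\{4d,1000\}$, which in addition dominates the bound $\max\{2d,1000\}$ of Theorem \ref{bounds} on the dimensions of indecomposables over representation-finite algebras.
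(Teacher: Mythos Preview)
Your argument is correct, but it is not the proof the paper sketches. The paper recounts Gabriel's original 1975 argument: one first uses only the \emph{classical} Brauer--Thrall~2 to get $fin_d=\bigcap_{n\geq 1}U(n)$, and then, independently, Auslander's homological characterization of representation-finite algebras to exhibit $fin_d$ as a countable \emph{union} of constructible subsets of $alg_d$. A general constructibility/noetherianity argument on varieties then forces both the descending intersection and the ascending union to stabilize at some finite stage --- this is the ``surprising fact'' the paper alludes to --- so that $fin_d$ is in particular a finite intersection of the $U(n)$ and hence open. Your route bypasses this entirely by invoking the sharp bound $e\leq\max\{30,4d\}$ from Theorem~\ref{BT2}, which gives the finite intersection $fin_d=\bigcap_{n\leq M}U(n)$ at once. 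This is shorter and more transparent, and the paper itself remarks that Gabriel's indirect existence of constants $C_1,C_2$ is ``now surpassed'' by Theorems~\ref{BT2} and~\ref{bounds}; the price you pay is that Theorem~\ref{BT2} rests on the whole machinery of the survey (ray-categories, coverings, the classification of large ssc algebras), whereas Gabriel's proof predates all of this and needs only BT~2 in its unquantified form together with elementary algebraic geometry. Within the present paper there is no circularity, since Theorem~\ref{faithful} is proved here by elementary means and not, as in \cite{BGRS}, via Theorem~\ref{Finopen}.
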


For the proof Gabriel observes that $fin_{d}$ is the intersection of all $U(n)$ by BT 2 and a countable union of constructible sets by Auslanders homological characterization of 
representation-finite algebras. Then one has the surprising fact that the intersection as well as the union stop at a finite level which implies that there are constants $C_{1}$ and $C_{2}$
such that all representation-infinite algebras of dimension $d$ have infinitely many isomorphism classes of indecomposables of dimension $\leq C_{1}$ and all representation-finite algebras
of dimension $d$ have at most $C_{2}$ isomorphism classes of indecomposables. Of course these results are now surpassed by theorem \ref{BT2} and theorem \ref{bounds}.

Theorem \ref{Finopen} is used in \cite{BGRS} for the proof of part a) of theorem \ref{faithful}. 
 
Geiss has combined in \cite{Geiss} Gabriels arguments with Drozd'theorem on tame and wild algebras\cite{Drozd,CBD,Zahmwild} to show.

\begin{theorem}
 Any deformation of a tame algebra is tame.
\end{theorem}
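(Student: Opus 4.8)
The plan is to derive the statement from the fact that Geiss actually proves, namely that the set $tame_d$ of tame $d$-dimensional algebras is open in $alg_d$, equivalently that the set $wild_d$ of wild ones is closed. Recall that $B$ is a \emph{deformation} of $A$ precisely when $A$ lies in the closure of the $Gl_d$-orbit of $B$, so that every $Gl_d$-stable neighbourhood of $A$ meets that orbit. Granting that $tame_d$ is open, a tame algebra $A$ possesses a $Gl_d$-stable open tame neighbourhood; this neighbourhood contains an algebra isomorphic to $B$, and hence $B$ is tame, as wanted. Here one uses Drozd's dichotomy \cite{Drozd,CBD,Zahmwild} in the form ``$tame_d$ and $wild_d$ partition $alg_d$'', and also in the sharper form that $A$ is tame if and only if, for every $n$, almost all $n$-dimensional indecomposables lie in finitely many one-parameter families, whereas the presence of a genuine two-parameter family of $n$-dimensional indecomposables already forces wildness.

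The proof that $tame_d$ is open mimics Gabriel's proof of Theorem~\ref{Finopen} line by line, with Auslander's homological characterisation of representation-finiteness replaced by the parametrising-bimodule characterisations of tameness and wildness coming from Drozd's theorem. First, by the same semicontinuity of fibres that underlies the openness of the sets $U(n)$ — applied through Gabriel's properness statement that $\pi$ sends closed $Gl_n$-stable subsets of $alg_d mod_n$ to closed subsets of $alg_d$ — one shows that for each $n$ the locus of algebras whose $n$-dimensional indecomposables are covered by at most one one-parameter family is open; call its complement $W_n$, a closed set. By the dichotomy, $A$ is wild exactly when $A\in W_n$ for some $n$, so $wild_d=\bigcup_{n\ge1}W_n$ is a countable union of closed sets and $tame_d=\bigcap_{n\ge1}(alg_d\setminus W_n)$ a countable intersection of open sets. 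By the dichotomy again, $tame_d$ is at the same time a countable union of constructible sets, the natural strata being cut out by the existence of finitely many controlling $A$-$k[x]$-bimodules of bounded degree. Now Gabriel's stabilisation argument applies verbatim: a subset of the Noetherian finite-dimensional scheme $alg_d$ that is simultaneously a countable intersection of opens and a countable union of constructibles is itself constructible, and a constructible set equal to a decreasing intersection of opens must coincide with one of them; hence $tame_d$ is open.

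The \textbf{main obstacle} is the step producing the closed sets $W_n$, that is, turning ``wild in dimension $n$'' into a locus with good closure behaviour. In the representation-finite situation one gets the needed openness essentially for free from semicontinuity of fibre dimension; here one must also rule out that the two apparent parameters of a family collapse to a single one under specialisation of the algebra, and this is exactly Geiss's contribution — a careful analysis, in the spirit of the local arguments of Sections~3 and~6, of how the number of parameters of the $n$-dimensional modules varies in a family of algebras, showing that it cannot drop under degeneration. Once this semicontinuity is in place everything else is a transcription of the proof of Theorem~\ref{Finopen}, and the passage from ``$tame_d$ is open'' to ``deformations of tame algebras are tame'' is the elementary observation of the first paragraph.
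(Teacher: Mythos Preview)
Your proposal rests on a false premise. You assert that Geiss proves $tame_d$ is open in $alg_d$ and then deduce the theorem from this; but the paper states explicitly, two sentences after the theorem, that ``the question whether tame type is open is not yet answered.'' So you are attributing to Geiss --- and then attempting to reprove --- a statement that is strictly stronger than the theorem and is, as of the paper's writing, an open problem.

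The irony is that your own step 1 already contains everything needed. If the locus $W_n=\{A: \text{the $n$-dimensional $A$-modules have at least two parameters}\}$ is closed and $Gl_d$-stable, then the theorem follows at once: for $B$ wild one has $B\in W_n$ for some $n$ by Drozd's dichotomy, hence $\overline{Gl_d\cdot B}\subseteq W_n$, hence any degeneration $A$ of $B$ lies in $W_n$ and cannot be tame. This upper semicontinuity of the number of parameters, obtained from Gabriel's properness of $\pi$ together with the usual semicontinuity of fibre and orbit dimensions, \emph{is} Geiss's contribution, and it yields the contrapositive of the theorem directly. Your detour through ``$tame_d$ is open'' is not required.

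Where your argument actually breaks is step 3. To run Gabriel's stabilisation you would need $tame_d$ to be a countable union of constructible sets, and your justification --- strata ``cut out by the existence of finitely many controlling $A$-$k[x]$-bimodules of bounded degree'' --- does not do this: there is no known tame analogue of Auslander's homological characterisation of representation-finiteness that would produce such a constructible stratification. This is precisely the missing ingredient that keeps the openness question open. In short: drop the second and third paragraphs, keep the semicontinuity of the parameter count, and conclude via the contrapositive.
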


This result is very useful because a lot of interesting algebras with unknown module structure degenerate to special biserial algebras which are always tame \cite{GP,Wald}. 
For instance this is used in the 
recent interesting results of Brüstle, de la Pe\~{n}a and Skowro\'{n}ski on tame strongly simply connected algebras \cite{Brstle}. Unfortunately, one does not get a description of the 
indecomposables with this method.

The theorems of Gabriel and Geiss have been analyzed and generalized in \cite{CB,Kasjan}, but the question whether tame type is open is not yet answered.

\subsection{The 'classification' of representation-finite and minimal representation-infinite algebras}

For representation-finite algebras there is no classification in the strict sense known. 

We have already seen several times that representation-finite algebras behave worse than minimal representation-infinite algebras. So one might ask  whether these algebras
 can be classified. If we look  at  a distributive algebra $A$ then we have by theorems \ref{faithful} and \ref{Fischbong} that $A\simeq kP$ for some ray-category $P$ with an 
interval-finite universal covering $\tilde{P}$ that is not 
locally representation-finite. 

If all finite convex subcategories are representation-finite then $A$ is a zero-relation algebra by lemma \ref{line} and in fact even a  special biserial
 algebra as observed in \cite{Rinmin} where Ringel classifies the minimal representation-infinite ones among these  and where he analyzes their  module categories. 

On the other hand the 
case where $k\tilde{P}$ contains a 
tame concealed algebra of type $\tilde{E}_{n}$ with $n=6,7,8$ leads to algebras with at most 9 points and thus to a finite classification problem. However Fischbachers list 
in \cite{Fisch3} shows that this classification  
 will probably end up with an unreadable list. 

 By the way,  'mild can be wild' i.e. there are a lot of wild algebras that are minimal representation-infinite so that in general one will not  get the structure
 of the modules.

\subsection{The classification of representation-finite selfinjective algebras}

The classification of the blocks with cyclic defect and their indecomposable modules due to Dade, Janusz and Kupisch \cite{Dade,Janusz,Kupisch,Indecom} is an early highlight of the representation theory 
of algebras. Also the detailed study of representation-finite symmetric algebras undertaken by Kupisch in \cite{Kupisch1,Kupisch2} is quite impressing, but his results formulated in terms of Cartan 
numbers are difficult to understand and far from 
a complete classification.

The situation changed after the invention of almost split sequences when Riedtmann discovered in \cite{Riedtmann} that the stable Auslander-Reiten quiver of a representation-finite  
algebra has a
 simple structure ( see also Todorovs work in \cite{Todo} ). For  a selfinjective algebra only the missing projective-injectives have to be inserted and the possible configurations of  these points were first studied purely combinatorially by Riedtmann, but later on Hughes-Waschbüsch \cite{Hughes} as well as
Bretscher-Riedtmann-Läser \cite{BLR} found independently two direct constructions and the second group of authors classified that way the configurations and so also the representation-finite
 selfinjective standard algebras. 

However, Riedtmann had observed earlier that for some configurations and only in characteristic 2 there is also an exceptional non-standard algebra. 
The published proofs that 
this is the only 'accident' are complicated. In the approach by  Riedtmann, Bretscher and Läser it is contained in the articles
 \cite{RiedtmannAn,Riedtmann2,BLR}, in the approach by Kupisch,Waschbüsch and Scherzler in the articles \cite{Kupisch1,Kupisch2,Kupisch3,Waschbsch1}. Some of  these  are long  and 
difficult to read.

Here, to illustrate the strategy of section 3 we will analyze which critical paths and non-deep contours occur for representation-finite selfinjective algebras by using the classification of the 
standard-algebras. It turns out that the difficult steps 1 and 2 of section 3  will never occur except for one case where we deal with one penny-farthing glued to a Brauer-quiver algebra. Thus one can get a complete
proof of the classification of the representation-finite selfinjective algebras by reading \cite{Riedtmann,BLR} and \cite[sections 8.3-8.6]{BGRS}.

Let $T$ be a Dynkin-diagram. For any subset $C$ of ${\bf Z}T$ we denote by ${\bf Z}T_{C}$ the simply connected 
translation quiver
 obtained from ${\bf Z}T$ by adding for each $c \in C$ a new point $c^{\ast}$ which is projective and 
injective and arrows 
$c \rightarrow c^{\ast} \rightarrow \tau^{-1}c$. We call $C$ a configuration of $T$ if ${\bf Z}T_{C}$ is the Auslander-Reiten quiver of a locally representation-finite selfinjective
 category $A=A(T,C)$ which is then uniquely determined because of  $k({\bf Z}T_{C}) \simeq ind \,A(T,C)$ ( see theorem \ref{well} ). The period $e$ of a configuration is the smallest natural number such
 that $\tau^{e}$ stabilizes $C$.

 We set $m(A_{n})=n, m(D_{n})=2n-3,m(E_{6})=11,
m(E_{7})=17,m(E_{8})=29$. The following crucial result follows easily from the well-known properties of the additive functions in $k({\bf Z}T)$ given in \cite{Gabaus}.

\begin{lemma}\label{m(T)}
 Let  $C$ be a configuration of $T$. Let $S$ be a simple $A$-module with projective covering $P(S)=c^{\ast}$ and injective hull $I(S)=d^{\ast}$. Then we have:
 \begin{enumerate}
\item $Hom (P(S),I(S))=k$ and the only profound morphisms in $ind \,A(T,C)$ are the non-zero elements in $Hom (P(S),I(S))$.

  \item $d=\tau^{-m(T)}c$, whence $C$ is stable under $\tau^{m(T)}$ and $e$ divides $m(T)$.
\item $k({\bf Z}T_{C})(c^{\ast},\tau^{r}c^{\ast})= k({\bf Z}T_{C})(c^{\ast},\tau^{s}\phi c^{\ast})=0$ for all $r > m(T)$, $s \geq m(T)$ and non-trivial automorphisms $\phi$ of $T$.
 \end{enumerate}

\end{lemma}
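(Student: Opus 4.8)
The plan is to derive all three statements from the additive-function theory for $k(\mathbf{Z}T)$, exactly as announced. Recall that $k(\mathbf{Z}T)$ carries the well-known additive function $\ell$ (the "length" function, built from the coordinates of the positive roots of the Dynkin diagram $T$), and that for the mesh category one has a combinatorial description of all the Hom-spaces $k(\mathbf{Z}T)(x,y)$ together with the fact that each such space is at most one-dimensional and is non-zero exactly on a "rhombus" of points determined by the two Dynkin shapes hanging off $x$ and $y$. The number $m(T)$ I defined is precisely the $\tau$-distance after which the injective envelope of a simple sits relative to its projective cover in type $T$ (it is the Coxeter-number-type invariant: $m(A_n)=n$ is the obvious one, and the others are read off the longest root). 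So the whole lemma is a translation of these classical facts through the equivalence $k(\mathbf{Z}T_C)\simeq \operatorname{ind}A(T,C)$ of Theorem \ref{well}.

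First I would prove b). For a simple $A$-module $S$ with $P(S)=c^{\ast}$ and $I(S)=d^{\ast}$, the almost split sequence machinery on $\mathbf{Z}T_C$ says that the composite $c\to c^{\ast}\to\tau^{-1}c$ together with the mesh relations forces the socle of $P(S)$, i.e. the image of the inclusion $\operatorname{rad}P(S)/\ldots$, to be reached after exactly $m(T)$ applications of $\tau^{-1}$; concretely the non-projective, non-injective part of $\operatorname{ind}A$ between $c$ and $d$ looks like the interval of $\mathbf{Z}T$ from $c$ to $\tau^{-m(T)}c$, and $\ell$ forces $d=\tau^{-m(T)}c$. This is a direct computation with the additive function: $\ell$ increases along the obvious path $c\to\cdots$ until it reaches its maximal value (the height of the longest root) and then decreases, and the number of steps is $m(T)$ by definition of $m(T)$. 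Since $S$ was an arbitrary simple and the configuration $C$ is just the $\tau$-orbit data of the $c$'s, it follows that $\tau^{m(T)}$ permutes $C$, so $e\mid m(T)$.

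Next I would do a). That $\operatorname{Hom}(P(S),I(S))=k$ is immediate from b) together with the one-dimensionality of Hom-spaces in $k(\mathbf{Z}T_C)$: the space $k(\mathbf{Z}T_C)(c^{\ast},\tau^{-m(T)}c^{\ast})=k(\mathbf{Z}T_C)(c^{\ast},d^{\ast})$ is non-zero (there is the path through the mesh at $c$, then across $\mathbf{Z}T$, then into $d^{\ast}$) and at most one-dimensional. For the second clause I must identify the profound morphisms of $\operatorname{ind}A(T,C)$ — i.e. those killed by every irreducible map on both sides. In $k(\mathbf{Z}T_C)$ an irreducible map is an arrow of $\mathbf{Z}T_C$, so a non-zero morphism $x\to y$ is profound iff it does not factor through any arrow into $y$ and is not the source of any arrow out of $x$ after composition — equivalently $y$ is injective and $x$ is projective, since otherwise the mesh at $y$ (or at $x$) provides a factorization. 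The only non-zero Hom's between a projective-injective $c^{\ast}$ and a projective-injective $d^{\ast}$ are, again by the rhombus description and b), the ones with $d^{\ast}=I(S)$ for the simple $S$ with $P(S)=c^{\ast}$; hence the profound morphisms are exactly the non-zero elements of $\operatorname{Hom}(P(S),I(S))$.

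Finally c) is the statement that one cannot prolong: for $r>m(T)$ the rhombus on which $k(\mathbf{Z}T_C)(c^{\ast},\tau^{r}c^{\ast})$ could be non-zero has already been exhausted (the additive function $\ell$ has returned to a value from which no further non-zero morphism into $\tau^{r}c^{\ast}$ exists), and the twisted version with a non-trivial automorphism $\phi$ of $T$ only becomes available, if at all, strictly later, which is why the bound there is $s\ge m(T)$ rather than $s>m(T)$. Concretely I would invoke the explicit formula for $\dim k(\mathbf{Z}T)(x,y)$ in terms of the two Dynkin subdiagrams and observe that for $\tau$-distance $>m(T)$ (resp.\ $\ge m(T)$ after applying $\phi$) the two subdiagrams no longer overlap in the way required for the Hom to be non-zero. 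I expect the only mildly delicate point to be pinning down the $\phi$-twisted bound in type $A_n$ and in type $D_n$ for even $n$, where $T$ has a non-trivial automorphism and the rhombus is symmetric; there one has to check by hand that the symmetry of the configuration under $\tau^{m(T)}$ does not already force a non-zero morphism at distance exactly $m(T)$ through $\phi$, but this is a finite check using the known configurations. Everything else is a routine unwinding of the additive-function description of $k(\mathbf{Z}T)$ from \cite{Gabaus}.
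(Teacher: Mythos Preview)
Your proposal is correct and follows the same route as the paper: everything is reduced to the additive-function description of $k(\mathbf{Z}T)$ from \cite{Gabaus}, with the details deferred to \cite{BLR}. The paper's proof is in fact only three lines.

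One small difference worth noting: for the identification of profound morphisms in part a), the paper simply invokes the general fact that in $\operatorname{ind}A$ for \emph{any} locally bounded category the profound morphisms are exactly those of the form $P(S)\to S\to I(S)$; this is cleaner than your argument inside $\mathbf{Z}T_C$, whose phrasing (``does not factor through any arrow into $y$'', ``the mesh provides a factorization'') is a bit garbled. The correct version of your argument is: if $y$ is not injective the source map $y\to\bigoplus y^{+}$ coming from the almost split sequence starting in $y$ is a monomorphism, so nothing non-zero into $y$ can be killed by all irreducibles out of $y$; dually for $x$. Then one still needs that a non-zero $c^{\ast}\to d'^{\ast}$ killed by $c\to c^{\ast}$ and by $d'^{\ast}\to\tau^{-1}d'$ must factor through the top $S$ of $c^{\ast}$ and land in the socle of $d'^{\ast}$, forcing $d'^{\ast}=I(S)$. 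This is what the general fact encapsulates. Your worry in c) about the $\phi$-twisted bound requiring a ``finite check using the known configurations'' is unnecessary: the vanishing is read off directly from the additive function on $\mathbf{Z}T$ and does not depend on the particular configuration $C$.
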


\begin{proof} For any  locally bounded category the  profound morphisms in $ind \,A$ are  of the form $P(S) \rightarrow S \rightarrow I(S)$. The rest follows from the shape of the additive functions. 
 Details can be found in \cite[proposition 1]{BLR}.
\end{proof}

Now let $A$ be a representation-finite selfinjective algebra with Auslander-Reiten quiver $\Gamma_{A}$. By Riedtmanns structure theorem  \cite{Riedtmann} its universal 
covering $\tilde{\Gamma}_{A}$ is isomorphic to some 
${\bf Z}T_{C}$ for a uniquely determined Dynkin-diagram $T$ called the tree class of $A$ and a configuration $C$ of  $T$. The standard-form $A^{s}$ is then the full subcategory of projectives 
in $k({\bf Z}T_{C}/\Pi)$ where $\Pi$ is an admissible subgroup of $Aut \,{\bf Z}T$ leaving $C$ invariant and one obtains that way all standard-algebras up to isomorphism. The group $\Pi$ is the fundamental group of $\vec{A}$ and its structure is determined in 
\cite[3.3]{RiedtmannAn} and \cite[section 1.6]{BLR}.

\begin{lemma}\label{fundamental} The fundamental groups are always cyclic and we are in one of the following two situations:
\begin{enumerate}
 \item $\Pi= \langle \tau^{r}\phi \rangle$ for some $r\geq m(T)$ and some ( possibly trivial ) automorphism $\phi$ of $T$. Then we call $\Pi$ small.
\item $\Pi= \langle \tau^{se} \rangle$ for some $s\geq 1$ such that $se<m(T)$. Then we call $\Pi$ large. This occurs only for the types $A_{n}$ and $D_{n}$.
\end{enumerate}
\end{lemma}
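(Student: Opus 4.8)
The plan is to prove the statement in three stages. \emph{First}, I would show that $\Pi$ is infinite cyclic. Since $A$ is representation-finite, $\mathbf{Z}T_{C}/\Pi$ is a \emph{finite} translation quiver, so $\Pi$ has finite index in $\mathrm{Aut}(\mathbf{Z}T)$. Now $\mathrm{Aut}(\mathbf{Z}T)$ is virtually infinite cyclic: it is an extension of a finite group --- essentially $\mathrm{Aut}(T)$, enlarged by a half-translation $\sigma$ with $\sigma^{2}=\tau$ when $T=A_{n}$ with $n$ even --- by the central subgroup $\langle\tau\rangle\cong\mathbf{Z}$, and $[\mathrm{Aut}(\mathbf{Z}T):\langle\tau\rangle]\leq6$. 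Hence $\Pi$ is virtually infinite cyclic too, and since it acts freely on the vertices of $\mathbf{Z}T_{C}$ --- part of admissibility --- it is torsion-free. A torsion-free virtually cyclic group is infinite cyclic, so $\Pi=\langle g\rangle$.

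\emph{Second}, I would put $g$ into normal form. Write $g=\tau^{r}\phi$ with $\phi$ a possibly trivial graph automorphism of $T$ (for $T=A_{n}$, $n$ even, allowing $r\in\tfrac{1}{2}\mathbf{Z}$ and folding the flip into $\sigma$). As $\phi$ has finite order while $g$ has infinite order, $r\neq0$, and after replacing $g$ by $g^{-1}$ if necessary we may assume $r>0$. Since $g$ preserves $C$ and $e$ is the period of $C$, the case $\phi=\mathrm{id}$ forces $e\mid r$, so there $g=\tau^{se}$ with $s\geq1$. It then remains to prove that $r\geq m(T)$ whenever $\phi\neq\mathrm{id}$, and that $g=\tau^{se}$ with $se<m(T)$ can occur only for $T=A_{n}$ or $T=D_{n}$.

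\emph{Third}, I would read off these inequalities from Lemma \ref{m(T)} by analyzing what it means for $\Pi$ to act admissibly on the \emph{enlarged} quiver $\mathbf{Z}T_{C}$, that is, also on the projective-injective vertices $c^{\ast}$, $c\in C$. If $S$ is simple with $P(S)=c^{\ast}$, then by Lemma \ref{m(T)} its injective hull is $d^{\ast}$ with $d=\tau^{-m(T)}c$, so along a longest path $c^{\ast}$ reaches $(\tau^{-m(T)}c)^{\ast}$, and the Hom-spaces from the $c^{\ast}$ to their $\tau$-translates are controlled by part c) of Lemma \ref{m(T)}. For $\Pi=\langle\tau^{r}\phi\rangle$ to descend to a selfinjective algebra having $\mathbf{Z}T_{C}/\Pi$ as Auslander--Reiten quiver, the translates $g^{k}c^{\ast}=(\tau^{kr}\phi^{k}c)^{\ast}$ with $k\neq0$ must not collapse this picture; spelling that out with Lemma \ref{m(T)}c) forces $r\geq m(T)$ as soon as $\phi\neq\mathrm{id}$, and in the untwisted case it forces $se\geq m(T)$ unless the hammock of $c^{\ast}$ admits a shorter period. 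By the explicit form of the additive functions on $k(\mathbf{Z}T)$ recalled in \cite{Gabaus}, this last situation arises exactly for $T=A_{n}$ and $T=D_{n}$; for $E_{6},E_{7},E_{8}$ every configuration has period precisely $m(T)$, leaving no room. I expect this final point to be the main obstacle, since it requires the honest case-by-case determination of the configurations of each Dynkin type and of their periods, together with the verification that the quotients listed in a) and b) really are admissible --- precisely the analysis carried out in \cite[3.3]{RiedtmannAn} and \cite[section 1.6]{BLR}. Everything before it is soft.
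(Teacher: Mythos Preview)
The paper itself does not prove this lemma: it simply records the statement and attributes it to \cite[3.3]{RiedtmannAn} and \cite[section 1.6]{BLR}. So there is no ``paper's own proof'' to compare against; your proposal is in effect a reconstruction of what those references do.

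As such a reconstruction, your outline is sound. Stages one and two are standard and correct: $\mathrm{Aut}(\mathbf{Z}T)$ is indeed a finite extension of $\langle\tau\rangle\cong\mathbf{Z}$, admissibility gives freeness hence torsion-freeness, and a torsion-free virtually cyclic group is infinite cyclic. The normal form $g=\tau^{r}\phi$ is the right parametrisation, and the observation that $\phi=\mathrm{id}$ forces $e\mid r$ is exactly what is needed to set up the dichotomy. You are also right that the substance lies entirely in stage three, and that Lemma~\ref{m(T)}(c) is the tool that rules out small~$r$ in the twisted case. One small sharpening you could add: since $m(E_{6})=11$, $m(E_{7})=17$, $m(E_{8})=29$ are all prime and $e\mid m(T)$ by Lemma~\ref{m(T)}(b), the only way to have $se<m(T)$ in type $E$ would be $e=1$, i.e.\ a $\tau$-invariant configuration; ruling that out is a short direct check, and this disposes of the $E$ types without the full classification of configurations. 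The genuine work that remains --- showing that large $\Pi$ actually occurs for $A_{n}$ and $D_{n}$, and that the resulting quotients are admissible and selfinjective --- is precisely the content of the cited sections of \cite{RiedtmannAn} and \cite{BLR}, as you acknowledge.
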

An analogue of  the next lemma is proven for the old definition of 'standard' in \cite{BLR}. We include a proof because we want to know which non-deep contours occur and whether all
morphisms are bitransit. Kupisch calls 
an algebra regular if all morphisms are bitransit.  There is no critical path in a regular algebra.
\begin{lemma}\label{small} If the fundamental group of $\vec{A}$ is small any morphism is bitransit and there is no non-deep contour. Therefore $A$ is standard.
 
\end{lemma}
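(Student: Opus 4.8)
The plan is to pin down $P=\vec{A}$ precisely enough, through its universal covering, to see that its endomorphism semigroups are as small as possible, and then to read off bitransitivity, the absence of non-deep contours, and standardness in quick succession.

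First I would reduce to the mesh category. By Riedtmann's structure theorem $\tilde\Gamma_{A}\simeq{\bf Z}T_{C}$, and by theorems \ref{GQARQ} and \ref{well} the universal covering $\tilde P$ of $P$ is characterized by $k\tilde P$ being isomorphic to the full subcategory of $k({\bf Z}T_{C})$ supported on the projective-injective vertices $c^{\ast}$, $c\in C$; since ${\bf Z}T_{C}$ has no oriented cycles, $\tilde P(c^{\ast},c^{\ast})=\{1,0\}$ for every $c$. The covering $k\pi\colon k\tilde P\to kP=A^{s}$ is a Galois covering for $\Pi=\Pi_{P}$, so $A^{s}(\bar c^{\ast},\bar c^{\ast})=\bigoplus_{g\in\Pi}k({\bf Z}T_{C})(c^{\ast},gc^{\ast})$. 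As $\Pi$ is small, $\Pi=\langle\tau^{r}\phi\rangle$ with $r\ge m(T)$ (lemma \ref{fundamental}), so the summand for $g=\tau^{rn}\phi^{n}$ is $k({\bf Z}T_{C})(c^{\ast},\tau^{rn}\phi^{n}c^{\ast})$, which is $k$ for $n=0$ and — by lemma \ref{m(T)}(c) together with the acyclicity of ${\bf Z}T_{C}$ — vanishes for every $n\ne0$ unless $r=m(T)$ and $\phi$ is trivial, in which case the single surviving term $n=-1$ equals $k({\bf Z}T_{C})(c^{\ast},\tau^{-m(T)}c^{\ast})=\mathrm{Hom}(P(S_{c}),I(S_{c}))$, one-dimensional and spanned by a profound morphism $\xi_{c}$ (lemma \ref{m(T)}(a)) with $\xi_{c}^{2}\in k({\bf Z}T_{C})(c^{\ast},\tau^{-2m(T)}c^{\ast})=0$. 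Thus every $A^{s}(\bar c^{\ast},\bar c^{\ast})$ is $H_{1}$ or $H_{2}$, the non-identity element in the second case being profound of square zero.

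The rest is then formal. Let $\mu\colon\bar c^{\ast}\to\bar d^{\ast}$ be non-invertible; then $\mu$ is a sum of composites of irreducible morphisms, or (if $\bar c^{\ast}=\bar d^{\ast}$) is itself such an $\xi$ with $\xi^{2}=0$. Every element of $\mathrm{rad}\,A^{s}(\bar c^{\ast},\bar c^{\ast})$ and of $\mathrm{rad}\,A^{s}(\bar d^{\ast},\bar d^{\ast})$ is $0$ or profound, and a profound morphism kills every irreducible morphism, so $\mu\cdot\mathrm{rad}\,A^{s}(\bar c^{\ast},\bar c^{\ast})=0=\mathrm{rad}\,A^{s}(\bar d^{\ast},\bar d^{\ast})\cdot\mu$. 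Hence $A^{s}_{\mu}=k\mu=\mu\,A^{s}(\bar c^{\ast},\bar c^{\ast})=A^{s}(\bar d^{\ast},\bar d^{\ast})\,\mu$, so $\mu$ is bitransit and deep. Identities are bitransit trivially, so every morphism of $\vec{A}$ is bitransit — whence, as remarked before the lemma, $A$ has no critical path — and every non-invertible morphism is deep, so $\vec{A}$ has no non-deep essential contour (one could also invoke theorem \ref{structure}: each of its three configurations requires an endomorphism semigroup $H_{n}$ with $n\ge3$, which $\vec{A}$ lacks). Finally, with no critical path and no non-deep essential contour, steps 1 and 2 in the proof of theorem \ref{multbasis} are vacuous in all characteristics, and Roiter's vanishing theorem (theorem \ref{vanishing}, which applies since $\vec{A}$ is mild, hence weakly zigzag-finite, with at most three arrows at each vertex) yields a presentation realizing $A\simeq k\vec{A}=A^{s}$; so $A$ is standard.

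The delicate point, where I would spend most of the care, is the double use of lemma \ref{m(T)}: pinning down the exceptional summand correctly (only $n=-1$, and only when $r=m(T)$ with $\phi$ trivial), and checking that $\xi_{c}$ — profound as a morphism of $\mathrm{ind}\,A$ by lemma \ref{m(T)}(a) — remains profound as a morphism of $\vec{A}$, i.e.\ is annihilated by the irreducible morphisms between the projectives, not only by the Auslander-Reiten arrows. The remaining ingredients (the Galois-covering decomposition of $A^{s}(\bar c^{\ast},\bar c^{\ast})$, the acyclicity of $k({\bf Z}T_{C})$, and the deduction of standardness from the machinery of theorem \ref{multbasis}) are routine given the quoted results.
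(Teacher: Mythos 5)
Your proof is correct and follows essentially the same route as the paper: the paper's one-line assertion that ``lemma~\ref{m(T)} implies that all non-invertible endomorphisms of an indecomposable projective are annihilated by all irreducible morphisms'' is precisely the Galois-covering computation of $A^{s}(\bar c^{\ast},\bar c^{\ast})$ that you carry out explicitly, and the remaining deductions (bitransitivity, absence of non-deep contours, step~3) coincide. The ``delicate point'' you flag — that profoundness in $\mathrm{ind}\,A$ passes to profoundness in $\vec A$ — is tacitly used in the paper's proof as well, and is fine because each irreducible morphism of $\vec A$ lifts to an irreducible map between indecomposable projectives in $\mathrm{ind}\,k\tilde P$, which the morphism $P(S)\to I(S)$ annihilates.
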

\begin{proof} Lemma \ref{m(T)} implies that all non-invertible endomorphisms of an indecomposable projective are annihilated by all irreducible morphisms. Thus all morphisms are bitransit and 
there is no non-deep contour. An application of step 3 in the   the proof
of theorem \ref{multbasis} shows that $A$ is standard.
\end{proof}

For the case of large fundamental groups we need more information.
By part b) of lemma \ref{m(T)} the period $e$ of a configuration always divides  $m(T)$. Thus to describe all possible configurations means to classify all standard algebras with fundamental group
$\langle \tau^{m(T)} \rangle$. This is done in the sections 6-8 of the   article \cite{BLR}  whose sections 2-4 can be proven much shorter and without using tilting theory.

For type $A_{n}$ one gets the well-known Brauer-quiver algebras. A Brauer-quiver $Q$ is a finite connected quiver such that: i) Each arrow belongs to a simple oriented cycle,
ii) Each vertex belongs to exactly two cycles and iii) Two cycles meet in one vertex at most. It follows that the cycles can be divided into two types say $\alpha$ and $\beta$ such that 
cycles meeting in a vertex belong to different types. Each arrow belongs only to one cycle so that we can speak of $\alpha$-arrows and $\beta$-arrows. The relations that a 
Brauer-quiver algebra has to satisfy are i) The compositions of two arrows of different types is 0 and ii) $\alpha^{a_{x}}= \beta^{b_{x}}$ holds for any point $x$ belonging to an $\alpha$-cycle
of length $a_{x}$ and a $\beta$-cycle of length $b_{x}$. These relations do not define an admissible ideal since there are always loops. For  examples we refer to \cite[section 6]{BLR}. 

The  algebras of type $D_{n}$ are obtained by a kind of glueing of two or three Brauer-quiver algebras. In the first case 
 the fundamental groups are small. Thus we only 
sketch the construction of \cite[section 7.2]{BLR} for the glueing of three Brauer-quiver algebras and for $n\geq 5$.

So let 
$5\leq n= n_{1}+ n_{2}+ n_{3}$ be given and let $P_{i}$, $i=1,2,3$  be Brauer-quiver algebras with $n_{i}$ vertices including a distinguished vertex $x_{i}$ lying on a $\beta$-loop.
To get the quiver of the algebra $D(P_{1},P_{2},P_{3})$ one separates the points $x_{i}$ in $P_{i}$ into an $\alpha$-sink $x_{i}^{-}$ and an $\alpha$-source $x_{i}^{+}$ and one adds 
arrows 
$\gamma_{i}: x_{i}^{-} \rightarrow x_{i}^{+}$ replacing the loops. Then one identifies $x_{i}^{+}$ with $x_{i+1}^{-}$ where the indices are taken modulo $3$ so that one obtains a cycle of 
length $3$ containing the $\gamma$-arrows. This is the only triangle such that the quiver remains connected after removing the arrows of the triangle ( and no point ). Now for  $e<m(T)$ the algebra 
$D(P_{1},P_{2},P_{3})$ has an additional symmetry $\sigma$ induced by $\tau^{e}$. This 
is only possible for $P_{1}=P_{2}=P_{3}=P$, $n= 3m$, $e=2m-1$ and for the rotation of the $\gamma$-cycle as additional symmetry.

 We do not give in detail the relations that $D(P_{1},P_{2},P_{3})$ has to satisfy, but we remark that $\sigma$ respects the relations on $D(P,P,P)$ and we denote by $N(P)$ the quotient.
 Then the quiver of $N(P)$ is obtained from $P=P_{1}$ by adding a loop $\gamma$ in the point $x_{1}$ 
and it follows from the relations of $D(P,P,P)$ that $N(P)$ has a penny-farthing with loop $\gamma$ as the only non-deep contour and the $\alpha$-path of length 2 through $x_{1}$ as the
 only critical
path. From this one gets by an easy direct argument that in characteristic 2 the linearization of the stem-category of $N(P)$ is the only non-standard algebra in accordance with
\cite{Waschbsch1,Riedtmann2}.

 For the smallest
possible Brauer-quiver algebra with two points one finds as $D(P,P,P)$ the algebra whose quiver is shown in figure 7.\vspace{0.5cm}

\setlength{\unitlength}{0.8cm}
\begin{picture}(10,5)

\put(3.5,5){\circle*{0.1}}
\put(6.5,5){\circle*{0.1}}
\put(9.5,5){\circle*{0.1}}
\put(5,3){\circle*{0.1}}
\put(8,3){\circle*{0.1}}
\put(6.5,1){\circle*{0.1}}

\put(6.5,5){\vector(-1,0){3}}
\put(9.5,5){\vector(-1,0){3}}
\put(8,3){\vector(-1,0){3}}
\put(3.5,5){\vector(3,-4){1.5}}
\put(5,3){\vector(3,-4){1.5}}
\put(6.5,1){\vector(3,4){1.5}}
\put(8,3){\vector(3,4){1.5}}
\put(5,3){\vector(3,4){1.5}}
\put(6.5,5){\vector(3,-4){1.5}}

\put(6,0){figure 7}
\end{picture}

\vspace{0.3cm}  It admits the threefold rotation as a symmetry
 and the quotient $N(P)$ is the algebra $A(1)$ we started with in figure 1.

Our round-trip through the land of mild categories has come to an end. We conclude with the proof that the $N(P)$ from above are the only non-standard algebras.
\begin{theorem}
Let $A\neq k$ be a connected representation-finite selfinjective algebra with standard-form $A^s$. Then we are in one of the following two situations:

\begin{enumerate}
 \item  $A^{s}$ is not isomorphic to some $N(P)$ for a Brauer-quiver $P$ with at most 2 vertices. Then all arrows are bitransit and there is no non-deep contour. Consequently $A$ is standard.
 
\item $A^{s}$ is  isomorphic to some $N(P)$ for a Brauer-quiver $P$ with at most 2 vertices. Then there is exactly one critical path and one non-deep contour.
$A$ is isomorphic to the linearization of its ray-category or of its stem-category and these two are isomorphic iff the characteristic is not $2$.
\end{enumerate}
\end{theorem}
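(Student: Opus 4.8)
The plan is to run through the classification of the representation-finite selfinjective \emph{standard} algebras recalled above and, for each type, read off the critical paths and non-deep contours directly from quiver and relations; step~3 of the proof of Theorem \ref{multbasis} (Roiter's vanishing theorem, Theorem \ref{vanishing}) then forces standardness whenever no non-deep contour survives, and Theorem \ref{char2} disposes of the one remaining family.

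Set up as follows. A representation-finite algebra is distributive (Jans), so $\vec{A}$ and the standard form $A^{s}=k\vec{A}$ are defined, and since critical paths and deep resp.\ non-deep contours depend only on $\vec{A}=\vec{A^{s}}$ it suffices to analyse $A^{s}$; indeed by Theorems \ref{multbasis} and \ref{char2}, $A$ itself is either $A^{s}$ (if $\mathrm{char}\,k\neq 2$) or one of the finitely many $k\vec{A}_{\mathcal N}$, $\mathcal N\subseteq\mathcal P$ (if $\mathrm{char}\,k=2$). By Riedtmann's structure theorem $\tilde{\Gamma}_{A}\simeq\mathbf{Z}T_{C}$ for a unique Dynkin diagram $T$ and configuration $C$, and $A^{s}$ is the full subcategory of projectives in $k(\mathbf{Z}T_{C}/\Pi)$ with $\Pi=\Pi_{\vec{A}}$ cyclic, small or large (Lemma \ref{fundamental}); if $\Pi$ is large then $T$ is of type $A_{n}$ or $D_{n}$.

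If $\Pi$ is small, Lemma \ref{small} already gives that $A^{s}$ is regular and has no non-deep contour, hence no critical path, and $A$ is standard; as every $N(P)$ carries the non-deep penny-farthing contour at its loop $\gamma$, $A^{s}$ is not an $N(P)$ and we are in case (a). If $T=A_{n}$ with $\Pi$ large, then $A^{s}$ is a Brauer-quiver algebra, and here I would check directly from the defining relations that every arrow is bitransit --- any two vertices lying on a common cycle are joined only by paths running inside that cycle, since distinct cycles meet in at most one vertex --- and that the only essential contours are the commutativity relations $\alpha^{a_{x}}=\beta^{b_{x}}$, whose common value spans the simple socle of $A(x,x)$ and is therefore annihilated by $\mathrm{rad}\,A(x,x)$ on either side, i.e.\ is deep; so $H(\vec{A})=0$ by Theorem \ref{vanishing} and $A$ is standard (case (a)). For $T=D_{n}$, glueings of two Brauer-quiver algebras have small $\Pi$ and are handled above, and for a glueing $D(P_{1},P_{2},P_{3})$ of three the same computation applies, the only new feature being the $\gamma$-triangle, whose composite around the cycle again lands in a simple socle; such algebras therefore also fall under case (a).

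The one genuinely exceptional situation is when $D(P_{1},P_{2},P_{3})$ admits the rotational symmetry $\sigma$, which forces $P_{1}=P_{2}=P_{3}=P$ and $A^{s}\simeq N(P)$; going through these one finds that representation-finiteness restricts $P$ to at most two vertices, so exactly the algebras of case (b) arise. For such an $N(P)$ a direct inspection of the relations shows that the loop $\gamma$ at $x_{1}$ spans a penny-farthing which is the unique non-deep contour, that the $\alpha$-path of length $2$ through $x_{1}$ --- that is, $\alpha_{1}\alpha_{n}$ --- is the unique critical path, and that $\vec{\alpha_{1}}\vec{\rho}\vec{\alpha_{n}}\neq 0$; hence the set $\mathcal P$ of Theorem \ref{char2} has exactly one element, which is not fixed by any automorphism of $\vec{A}$. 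By Theorem \ref{char2}(b) the only distributive categories with ray-category $\vec{A}$ are then $k\vec{A}=A^{s}$ and the linearization $k\hat{A}$ of the stem-category, and by Theorem \ref{multbasis}(a) together with Theorem \ref{char2}(a) these coincide precisely when $\mathrm{char}\,k\neq 2$; since $A$ is one of the two, case (b) follows. The main obstacle is this last local bookkeeping --- verifying $\vec{\alpha_{1}}\vec{\rho}\vec{\alpha_{n}}\neq 0$ and the uniqueness of the penny-farthing and the critical path in $N(P)$, and, more delicately, confirming that the symmetry constraint together with representation-finiteness leaves no $N(P)$ with three or more vertices, so that no unwanted non-deep contour slips into case (a).
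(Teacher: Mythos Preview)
Your case split goes wrong at the large fundamental group step. The Brauer-quiver algebras and the glueings $D(P_{1},P_{2},P_{3})$ are precisely the standard algebras with $\Pi=\langle\tau^{m(T)}\rangle$; this is the \emph{small} case $r=m(T)$ of Lemma \ref{fundamental}, so Lemma \ref{small} already disposes of them and your direct verifications for these quivers are redundant. The algebras with genuinely large $\Pi$ are the proper quotients $P_{s}=k(\mathbf{Z}T_{C}/\langle\tau^{se}\rangle)_{\mathrm{proj}}$ for $1\le s<f$, and these are \emph{not} Brauer-quiver algebras in the sense defined here: they have only $se<n$ points and their quivers need not satisfy the Brauer-quiver axioms, so your argument ``distinct cycles meet in at most one vertex'' does not apply to them. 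Concretely, for type $A_{n}$ you have addressed none of the large-$\Pi$ algebras, and for type $D_{3m}$ with $e=2m-1$ you treat only $P_{1}=N(P)$ and miss the $2$-fold cover $P_{2}$, which also has $2e<3e=m(T)$.

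The paper fills exactly this gap with a covering argument you do not use: it places the algebras in a tower of Galois coverings $P_{f}\to P_{s}\to P_{1}$, first shows that $P_{1}$ is regular with no non-deep contour by \emph{lifting} a hypothetical bad configuration to the Brauer-quiver algebra $P_{f}$ where the relations visibly forbid it, and then shows each intermediate $P_{s}$ is regular with no non-deep contour by \emph{pushing down} to $P_{1}$. For $D_{3m}$ the same trick handles $P_{2}$ via $P_{2}\to P_{1}=N(P)$. Your assertion that ``representation-finiteness restricts $P$ to at most two vertices'' is also unfounded: $N(P)$ is representation-finite for every Brauer-quiver $P$ with a $\beta$-loop, and all of these belong to case (b). Your treatment of case (b) itself, via Theorem \ref{char2} and the single penny-farthing, is on the right track.
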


\begin{proof} Part b) follows from the above discussion. 
For part a) we work with ray-categories. So let $C$ be a configuration for the tree class $A_{n}$ with period $e$ and $ef=n$: Denote by $P_{s}$ the ray category of the full subcategory of 
 projectives in $k({\bf Z}T_{C}/\langle \tau^{se} \rangle )$. Then $P_{f}$ is the ray category of a Brauer-quiver algebra and we have a $f$-fold Galois covering $\pi:P_{f} \rightarrow P_{1}$.
We claim that all irreducible morphisms - whence all morphisms - in $P_{1}$ are bitransit and that $P_{1}$ has no non-deep contour. Now up to duality any such contour involves three
 irreducible morphisms $\gamma,\delta,\epsilon$ with $\gamma \neq \delta$ and 
$\epsilon\delta \neq 0$ and $\epsilon\gamma \neq 0$  ( see figure 4 ). This situation can be lifted to $P_{f}$ where it is impossible.

 Next let $\gamma:x \rightarrow y$ be an irreducible
 morphism in $P_{1}$ with $x\neq y$ and let $\rho= \rho_{m}\ldots \rho_{2}\rho_{1}$ be a generator of the 'radical' of $P_{1}(y,y)$ written as a product of irreducible morphisms.
Assume that $\rho\gamma \neq 0$. Lifting $x$ and the irreducible morphisms one obtains upstairs a non-zero product $\sigma_{m+1}\sigma_{m}\ldots \sigma_{1}$ with
$\pi(\sigma_{1})=\gamma$. Thus all $\sigma_{i}$ belong to a  cycle  of the Brauer-quiver say with $r$ points and one has $\sigma_{r}\ldots \sigma_{m}\ldots \sigma_{1}\neq 0$ for some 
 irreducible morphisms $\sigma_{j}$ and also $\sigma_{m}\ldots \sigma_{1}\sigma_{r}\ldots \sigma_{m+1}\neq 0$. We have $\pi(\sigma_{1}\sigma_{r}\ldots \sigma_{m+2})=\rho^{i}$ with $i\geq 1$ whence 
$\pi(\sigma_{1})$ and $\pi(\sigma_{m+1})$ are two irreducible morphisms not annihilated by $\rho$. So they coincide as seen above. This gives $\rho\gamma=\pi( \sigma_{m+1}\sigma_{m}\ldots 
\sigma_{1})=\gamma\pi(\sigma_{m}\ldots \sigma_{1})$ and $\gamma$ is cotransit. Dually one gets that $\gamma$ is transit.

Now for any $s$ we have a Galois covering $P_{s} \rightarrow P_{1}$. Any non-bitransit morphism in $P_{s}$ would produce one in $P_{1}$ which is impossible.
 For non-deep contours the same argument works.

Finally, let $C$ be a configuration for the tree class $D_{3m}$ with period $e=2m-1$. Using  conventions analogous to the above we know that $P_{1}$ is $N(P)$ with only one non-deep 
contour which is a penny-farthing with loop in $x$ and only two non-bitransit irreducible morphisms $\alpha_{1}$ starting in $x$ and $\alpha_{n}$ ending in $x$.
 We only have to look at the $2$-fold Galois covering $\pi:P_{2} \rightarrow P_{1}$ that preserves non-bitransit irreducible morphisms and non-deep contours. One checks easily that
 the preimages of the $\alpha_{i}$ are bitransit and that there
 is no non-deep contour in $P_{2}$ that is mapped to the penny-farthing.
 
\end{proof}

\end{document}